\newtheorem{theorem}{Theorem}[section]
\newtheorem{lemma}{Lemma}[section]
\newtheorem{proposition}{Proposition}[section]
\newtheorem{corollary}{Corollary}[section]
\theoremstyle{definition}
\newtheorem{remark}{Remark}[section]
\numberwithin{equation}{section}
\title[Parabolic orbits in Celestial Mechanics]{Parabolic orbits in Celestial Mechanics: \\a functional-analytic approach}
\author[A.~Boscaggin]{Alberto~Boscaggin}
\address{Dipartimento di Matematica ``Giuseppe Peano'', Universit\`{a} di Torino \\
Via Carlo Alberto, 10 - 10123 Torino, Italy}
\email{alberto.boscaggin@unito.it}
\author[W.~Dambrosio]{Walter~Dambrosio}
\address{Dipartimento di Matematica ``Giuseppe Peano'', Universit\`{a} di Torino \\
Via Carlo Alberto, 10 - 10123 Torino, Italy}
\email{walter.dambrosio@unito.it}
\author[G.~Feltrin]{Guglielmo~Feltrin}
\address{Dipartimento di Scienze Matematiche, Informatiche e Fisiche, Università di Udine \\
Via delle Scienze, 206 - 33100 Udine, Italy}
\email{guglielmo.feltrin@uniud.it}
\author[S.~Terracini]{Susanna~Terracini}
\address{Dipartimento di Matematica ``Giuseppe Peano'', Universit\`{a} di Torino \\
Via Carlo Alberto, 10 - 10123 Torino, Italy}
\email{susanna.terracini@unito.it}
\thanks{Work supported by the ERC Advanced Grant 2013 n.~339958 \textit{Complex Patterns for Strongly Interacting Dynamical Systems - COMPAT}. Work written under the auspices of the Grup\-po Na\-zio\-na\-le per l'Anali\-si Ma\-te\-ma\-ti\-ca, la Pro\-ba\-bi\-li\-t\`{a} e le lo\-ro Appli\-ca\-zio\-ni (GNAMPA) of the Isti\-tu\-to Na\-zio\-na\-le di Al\-ta Ma\-te\-ma\-ti\-ca (INdAM).
\\
\textbf{Preprint -- August 2020}} 
\begin{document}

\subjclass{37J45, 37N05, 70F10, 70F15}

\keywords{Parabolic solution, $-\alpha$-homogeneous potential, central configuration, Celestial Mechanics, Hardy inequality.}

\date{}

\begin{abstract}
We prove the existence of half-entire parabolic solutions, asymptotic to a prescribed central configuration, for the equation
\begin{equation*}
\ddot{x} = \nabla U(x) + \nabla W(t,x), \qquad x \in \mathbb{R}^{d},
\end{equation*}
where $d \geq 2$, $U$ is a positive and positively homogeneous potential with homogeneity degree $-\alpha$ with $\alpha\in\mathopen{]}0,2\mathclose{[}$, and $W$ is a (possibly time-dependent) lower order term, for $\vert x \vert \to +\infty$, with respect to $U$. The proof relies on a perturbative argument, after an appropriate formulation of the problem in a suitable functional space. Applications to several problems of Celestial Mechanics (including the $N$-centre problem, the $N$-body problem and the restricted $(N+H)$-body problem) are given.
\end{abstract}

\maketitle

\section{Introduction and statement of the main result}\label{section-1}

In this paper, we are concerned with systems of second-order ODEs of the type
\begin{equation}\label{eq-x}
\mu_{j} \ddot{x}_{j} = \partial_{x_{j}} U(x) + \partial_{x_{j}} W(t,x), \qquad j = 1,\ldots,d,
\end{equation}
where $x = (x_1,\ldots,x_d) \in \mathbb{R}^d$, with $d \geq 2$, and:
\begin{itemize}
\item $\mu_1,\ldots,\mu_d > 0$, 
\item $U \in \mathcal{C}^{3}(\Sigma,\mathopen{]}0,+\infty\mathclose{[})$ is a positively homogeneous potential with homogeneity degree $-\alpha$ with $\alpha\in\mathopen{]}0,2\mathclose{[}$, where $\Sigma \subseteq \mathbb{R}^d \setminus \{0\}$ is an open set such that, if $x \in \Sigma$ and $\lambda > 0$, then $\lambda x \in \Sigma$, 
\item $W$ is a lower order term, for $\lvert x \rvert \to +\infty$, with respect to $U$ (the precise condition will be given in the statement of the main result).
\end{itemize}
In vector notation, equation \eqref{eq-x} can be written as
\begin{equation}\label{eq-main}
\ddot x = \nabla U(x) + \nabla W(t,x),
\end{equation}
where the symbol $\nabla$ stands for the gradient with respect to the so-called mass scalar product
\begin{equation}\label{eq-scalarepesato}
\langle x, y \rangle = \sum_{j=1}^{d} \mu_{j} x_{j} y_{j}, \qquad x,y\in\mathbb{R}^{d}.
\end{equation}
This notation will be used throughout the paper; we will also write $\lvert x \rvert = \sqrt{\langle x, x \rangle}$
for the associated norm.

Equation \eqref{eq-main} is motivated by problems of Celestial Mechanics 
(having $\alpha = 1$, by Newton's law of gravitation).
As a first example, when $\mu_{j} = 1$ for every $j$ and $U(x) = \nu/\|x\|$ (here, $\| \cdot \|$ stands for the Euclidean norm of a $d$-dimensional vector), \eqref{eq-main} reduces to the equation
\begin{equation}\label{eq-kepler-intro}
\ddot x = - \frac{\nu x}{\| x \|^3} + \nabla W(t,x),
\end{equation}
which can be meant as a (possibly time-dependent) perturbation of the Kepler problem in the $d$-dimensional space.
For instance, both the classical $N$-centre problem and the elliptic restricted three-body problem can be written in this form, with
$W(t,x) = W(x)$ in the former case and $W(t,x)$ periodic in time in the latter one (cf.~Section \ref{section-6.2})
As a further application, we can take $d = kN$ (with $k,N \geq 2$), 
\begin{equation*}
(\mu_1,\ldots,\mu_d) = (\overbrace{m_1,\ldots,m_1}^{\text{$k$ times}},m_2,\ldots,m_2,\ldots,m_N,\ldots,m_N)
\end{equation*}
and 
\begin{equation*}
U(x) = \sum_{i<j}\frac{m_i m_{j}}{\|q_i - q_{j}\|},
\qquad
\text{$x = (q_1,\ldots,q_N)$, with $q_i \in \mathbb{R}^k$,}
\end{equation*}
(here, $\| \cdot \|$ stands for the Euclidean norm of a $k$-dimensional vector) so that \eqref{eq-main} yields
\begin{equation}\label{eq-N-intro}
m_{i} \ddot q_{i} = -\sum_{j \neq i} \frac{m_{i} m_{j} (q_{i} - q_{j})}{\|q_{i} - q_{j} \|^3} + \frac{\partial}{\partial q_{i}} W(t,q_1,\ldots,q_N), \qquad i=1,\ldots,N,
\end{equation}
that is, a perturbation of the classical $N$-body problem in the $k$-dimensional space (cf.~Section \ref{section-6.1}).
Incidentally, notice that in such a case the potential $U$ is defined only when
$q_i \neq q_{j}$ for every $i \neq j$, so that $\Sigma \subsetneq \mathbb{R}^{kN} \setminus \{0\}$.
More general situations could also be treated (cf.~Section~\ref{section-6.3}).

Our interest is in constructing half-entire solutions $x \colon \mathopen{[}0,+\infty\mathclose{[} \to \mathbb{R}^{d}$ to \eqref{eq-main} approaching infinity with zero velocity, namely
\begin{equation*}
\lim_{t \to +\infty} \lvert x(t) \rvert = +\infty \quad \text{ and } \quad \lim_{t \to +\infty} \dot{x}(t) = 0.
\end{equation*}
Throughout the paper, we will call such solutions \emph{parabolic}, according to the terminology used by Chazy in its pioneering paper \cite{Ch-22} investigating all the possible final states for a three-body problem as time goes to infinity. Incidentally, let us notice that, in the elementary case of the unperturbed Kepler problem (that is, equation \eqref{eq-kepler-intro} with $W = 0$) such solutions indeed lie on Keplerian parabolas. 

In the last decades, parabolic orbits for various equations of Celestial Mechanics have been investigated by many authors, from different point of views. For the circular restricted three body-problem, their existence was first proved by proved by McGehee \cite{Mc-73} via Dynamical Systems techniques: indeed, via a suitable change of variables, ``infinity'' (with zero velocity) can be regarded as a fixed point of a suitable Poincar\'{e} map, so that tools from the topological theory of invariant manifolds apply
(see \cite{BaFoMa-20} for an updated bibliography about this line of research). More recent contributions deal with the existence of parabolic solutions for $N$-centre and $N$-body problems \cite{BaHuPoTe-PP,BaTeVe-13,BaTeVe-14,BoDaPa-18,BoDaTe-17,LuMa-14,MaVe-09,MoMoSa-18,Mo-PP,Sa-84,SaHu-81}, often in connection with the scattering problem. Generally speaking, the interest for this kind of orbits
mainly comes from the fact that, in spite of the natural intrinsic instability, they can be used as carriers from different regions of the configuration space and, eventually, as building blocks in the construction of solutions with chaotic behavior 
(see \cite{GuMaSe-16,SoTe-12} and the references therein). Moreover, parabolic solutions are known to provide precious information on the behavior of general solutions near collisions \cite{Ch-98,Mc-74,Mo-89}; finally, they play a role in the applications of weak KAM theory to Celestial Mechanics and can be used to construct weak KAM solutions of the associated Hamilton--Jacobi equation \cite{FaMa-07,Ma-12}.

In more details, here we focus on the possibility of constructing a parabolic orbit starting from a prescribed configuration $x_0 \in \mathbb{R}^d$ at $t = 0$ and having a prescribed asymptotic direction,
that is, $\lim_{t \to +\infty}x(t)/|x(t)| = \xi^+$. Incidentally, let us recall that, under quite general assumptions (cf.~\cite{BaTeVe-14,Ch-98,SaHu-81}), whenever this limit exists, then $\xi^+$ must be a (normalized) central configuration of the potential $U$.
In the case of the $N$-body problem (that is, equation \eqref{eq-N-intro} with $W = 0$), such an issue has been addressed in the pioneering paper by Maderna and Venturelli \cite{MaVe-09}. More precisely, they constructed, with variational arguments, parabolic solutions starting from an arbitrary configuration and approaching, at infinity, any prescribed minimizing central configuration. 
Such solutions, having infinite action, were obtained as
limits of solutions of approximating two-point boundary value problems; the existence of the approximate solutions was ensured by the direct method of the Calculus of Variations (together with Marchal's lemma), while delicate action level estimates, strongly relying on the homogeneity of the $N$-body problem, were then used to show the convergence to a limit solution and its parabolicity. 

In this paper, we are going to prove a perturbative version of this result (meaning that the starting configuration will not be arbitrary, but rather on a conic neighborhood of the central configuration), which however enhances it as for two different aspects. First, the minimality of the central configuration will not be assumed; instead, it will be replaced by a spectral condition introduced in the paper by Barutello and Secchi \cite{BaSe-08} and later extensively explored in \cite{BaHuPoTe-PP}. Second, our result will be valid not only for the $N$-body problem, but in the much more general setting of equation \eqref{eq-main}. In this regard, the fact the perturbation term $W$ is allowed to be \textit{time-dependent} has substantial consequences, this being indeed, as well known, the source of a more complex dynamics. For instance, while for autonomous problem (e.g., $N$-body, $N$-centre) parabolic solutions to \eqref{eq-main} exiting a large ball are forced to go to infinity (as a consequence of the Lagrange-Jacobi inequality, see \cite[p. 94]{Ch-98} and \cite[Lemma~2.1]{BoDaTe-17}), when $W$ depends explicitly on time solutions with oscillatory behavior (that is, $\liminf_{t \to +\infty}|x(t)| < \limsup_{t \to +\infty}|x(t)| = +\infty$) shadowing parabolic orbits, may exist (see \cite{GuMaSe-16,GuSeMaSa-17,LlSi-80} and the references therein). For these reasons, it seems to be an hard task to construct parabolic solutions via an approximation argument similar to the one in \cite{MaVe-09} and we will indeed follow a completely different approach. 

To state our result precisely, we introduce the so-called inertia ellipsoid
\begin{equation*}
\mathcal{E} = \bigl{\{} x \in \mathbb{R}^d \colon |x| = 1 \bigr{\}}
\end{equation*}
and we set
\begin{equation*}
\mathcal{U} = U \vert_\mathcal{E}.
\end{equation*}
As usual, we say that $\xi^+ \in \mathcal{E}$ is a (normalized) \emph{central configuration} if it is a critical point of $\mathcal{U}$. with this in mind, the Barutello-Secchi spectral condition reads as follows:
\begin{itemize}[leftmargin=30pt,labelsep=8pt,topsep=5pt]
\item[$(\textsc{BS})$] \emph{the smallest eigenvalue $\nu_{1}$ of $\nabla^{2} \mathcal{U}(\xi^{+})$ satisfies}
\begin{equation*}
\nu_{1} > -\dfrac{(2-\alpha)^{2}}{8} \, \mathcal{U}(\xi^{+}),
\end{equation*}
\end{itemize}
where $\nabla^{2}$ is the Hessian matrix of the function $\mathcal{U}$ with respect to the metric \eqref{eq-scalarepesato}
(hence, $\nabla^2 = M^{-1} D^{2}$, where $D^{2}$ denotes the Hessian with respect to the Euclidean metric and $M=\textnormal{diag}(\mu_1,\ldots,\mu_d)$). Notice that when $\xi^+$ is a minimizing central configuration (that is, a global minimum of $\mathcal{U}$), then the $(\textsc{BS})$-condition is of course satisfied.

We finally define, for $R > 0$ and $\eta\in\mathopen{]}0,1\mathclose{[}$, the set
\begin{equation}\label{cone}
\mathcal{T}(\xi^{+},R,\eta) = \biggl{\{} x \in \mathbb{R}^d \colon \lvert x \rvert > R \text{ and }  \Bigl{\langle} \dfrac{x}{\lvert x \rvert}, \xi^{+} \Bigr{\rangle} > \eta \biggr{\}}.
\end{equation}
Our main result is the following (see the end of this introduction for some clarification about the notation used). 

\begin{theorem}\label{th-main}
Let $d \in \mathbb{N}$, with $d\geq 2$, and let $\mu_1,\ldots,\mu_d > 0$.
Let $U \in \mathcal{C}^{3}(\Sigma,\mathopen{]}0,+\infty\mathclose{[})$ be a $-\alpha$-homogeneous potential, with $\alpha\in\mathopen{]}0,2\mathclose{[}$, and let 
$\xi^+ \in \mathcal{E}$ be a central configuration for $U$ satisfying the $(\textsc{BS})$-condition.
Finally, let $W\in\mathcal{C}^{2}(\mathopen{[}0,+\infty\mathclose{[}\times \mathcal{T}(\xi^{+},R,\eta))$ for some $R>0$ and $\eta\in\mathopen{]}0,1\mathclose{[}$ satisfy
\begin{equation}\label{hp-main}
\lvert W(t,x) \rvert + \lvert x \rvert \lvert \nabla W(t,x) \rvert + \lvert x \rvert^{2} \lvert \nabla^{2} W(t,x) \rvert = \mathcal{O}\bigl{(}\lvert x \rvert^{-\beta} \bigr{)},
\end{equation}
for $\lvert x \rvert \to +\infty$, uniformly in $t$, for some $\beta$ such that $4\beta-3\alpha>2$.
Then, there exist $R' \geq R$ and $\eta' \in \mathopen{[}\eta,1\mathclose{[}$ such that $\mathcal{T}(\xi^{+},R',\eta') \subseteq \Sigma$ and for every $x_{0} \in \mathcal{T}(\xi^{+},R',\eta')$ there exists a parabolic solution $x \colon \mathopen{[}0,+\infty\mathclose{[} \to \mathbb{R}^{d}$ of equation \eqref{eq-main} satisfying $x(0) = x_{0}$ and 
\begin{equation*}\lim_{t\to+\infty} \dfrac{x(t)}{\lvert x(t) \rvert} = \xi^{+}.
\end{equation*}
Moreover, $\lvert x(t) \rvert \sim \omega t^{\frac{2}{\alpha+2}}$ for $t \to +\infty$, where $\omega=\bigl{(}(\alpha+2)^{2} U(\xi^{+}) /2 \bigr{)}^{\!\frac{1}{\alpha+2}}$.
\end{theorem}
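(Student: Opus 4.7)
The strategy is perturbative around the explicit homothetic parabolic solution of the unperturbed problem. Since $\xi^+$ is a central configuration, one has $\nabla U(\xi^+) = -\alpha U(\xi^+)\xi^+$, and a short computation shows that $x_{\mathrm{hom}}(t) = \omega t^{\gamma}\xi^+$, with $\gamma = 2/(\alpha+2)$, solves $\ddot x = \nabla U(x)$ exactly when $\omega$ has the value given in the statement. For a prescribed $x_0 \in \mathcal{T}(\xi^+,R',\eta')$ I would seek the wanted parabolic solution of \eqref{eq-main} in the form
\begin{equation*}
x(t) = \omega(t+T)^{\gamma}\xi^+ + y(t),
\end{equation*}
where $T = T(x_0) > 0$ is a shift chosen so that $\omega T^{\gamma}$ matches the radial part of $x_0$ along $\xi^+$. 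The perturbation $y$ then satisfies $y(0) = x_0 - \omega T^{\gamma}\xi^+$ (tangential to $\xi^+$ and small if $\eta'$ is close to $1$) and, by the $(-\alpha-2)$-homogeneity of $\nabla^2 U$, the equation
\begin{equation*}
\ddot{y}(t) - \frac{1}{(t+T)^{2}}\, A\, y(t) = \mathcal{N}(y(t),t) + \nabla W\bigl(t,\omega(t+T)^{\gamma}\xi^+ + y(t)\bigr),
\end{equation*}
where $A = \omega^{-\alpha-2}\nabla^{2} U(\xi^+)$ and $\mathcal{N}$ is the quadratic-and-higher remainder of $\nabla U$.

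The principal linear part is an Euler (Bessel-type) operator, which dictates the functional setting: a weighted Sobolev space of functions on $\mathopen{[}0,+\infty\mathclose{[}$ with $\int (|\dot y|^2 + (t+T)^{-2}|y|^2)\,dt < +\infty$, matched to the one-dimensional Hardy inequality $\int (t+T)^{-2}|y|^{2}\,dt \leq 4\int |\dot y|^{2}\,dt$. The essential linear fact is that the quadratic form $\int \bigl(|\dot y|^2 + (t+T)^{-2}\langle Ay,y\rangle\bigr)dt$ is coercive on this space. Indeed, $\xi^+$ is an eigenvector of $A$ with positive eigenvalue $2\alpha(\alpha+1)/(\alpha+2)^2$, and for tangential $u \perp \xi^+$ a direct expansion yields $\nabla^{2}U(\xi^+)[u,u] = \nabla^{2}\mathcal{U}(\xi^+)[u,u] - \alpha U(\xi^+)|u|^{2}$; inserting the value $\omega^{-\alpha-2} = 2/((\alpha+2)^2 U(\xi^+))$ shows that the $(\textsc{BS})$-condition is \emph{exactly equivalent} to every tangential eigenvalue of $A$ being strictly greater than the Hardy critical constant $-1/4$. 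Coercivity then follows from Hardy's inequality, and Lax--Milgram (applied after reducing to zero initial datum by subtracting a suitable lift) makes the linear operator $\mathcal{L} y = \ddot y - (t+T)^{-2} A y$ an isomorphism between the weighted $H^{1}$-space and its natural dual, with a priori estimates depending continuously on $y(0)$.

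The proof is then concluded by a Banach fixed-point argument for the map $y \mapsto \mathcal{L}^{-1}\bigl[\mathcal{N}(y,\cdot) + \nabla W(\cdot,\omega(\cdot+T)^{\gamma}\xi^+ + y)\bigr]$ in a small ball of the weighted space. Homogeneity gives $|\mathcal{N}(y,t)| \lesssim (t+T)^{-\gamma(\alpha+3)}|y|^{2}$ and, by \eqref{hp-main} together with $|x_{\mathrm{hom}}(t+T)| \sim (t+T)^{\gamma}$, $|\nabla W| \lesssim (t+T)^{-\gamma(\beta+1)}$. Testing against the weight $(t+T)^{2}$ (i.e.\ checking that the right-hand side lies in the dual of the weighted $H^{1}$) turns the integrability requirement for the $W$-contribution into $\gamma(\beta+1) > 3/2$, which is precisely the arithmetic content of the hypothesis $4\beta - 3\alpha > 2$. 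Taking $R'$ large and $\eta'$ close to $1$ makes $T$ large and $|y(0)|$ small, so the map is a contraction on a shrinking ball; the resulting fixed point yields the desired parabolic solution, and the asymptotics $|x(t)|\sim \omega t^{\gamma}$ and $\dot x(t)\to 0$ are immediate from the norm bounds on $y$ and $\dot y$. The main technical obstacle is the linear step: identifying the correct weighted Sobolev space, showing that the $(\textsc{BS})$-threshold matches exactly the critical Hardy constant on the tangent space $T_{\xi^+}\mathcal{E}$, and handling the boundary contribution coming from the prescribed initial datum $y(0)$.
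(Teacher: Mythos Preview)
Your proposal is correct and follows essentially the same route as the paper: perturbation around the homothetic solution in a weighted $H^{1}$-type space governed by the Hardy inequality, with the $(\textsc{BS})$-condition providing exactly the coercivity threshold $-1/4$ on $T_{\xi^+}\mathcal{E}$. The only cosmetic differences are that the paper packages the large parameter as a rescaling $\varepsilon\to 0$ rather than a time shift $T\to+\infty$, handles the initial datum via an explicit compactly supported lift $\sigma w(t)$, and closes the nonlinear step with the implicit function theorem applied to the action functional (Lax--Milgram for the invertibility of the second differential) rather than a Banach fixed point on the equation.
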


In order to prove Theorem~\ref{th-main}, we face the problem from a functional analytical point of view, applying the implicit function theorem in a suitable space of functions defined on the half line: the required non-degeneracy will be ensured via the use of an Hardy-type inequality \cite{HaLi}. As a consequence, our parabolic solutions turn out to be local minimizers of a renormalized action functional, in the sense of Remark~\ref{rem-3.1}; this variational characterization makes these orbits suitable for the analysis of their Maslov indices as in \cite{BaHuPoTe-PP}. 

Our proof can be roughly described as follows. The crucial idea is to look for solutions to \eqref{eq-main} having the form
\begin{equation*}
x(t) = x_{0}(t + \lambda) + u_{\lambda}(t),
\end{equation*}
where $\lambda > 0$ is a (large) parameter, $x_{0}$ is the homotethic parabolic solution with direction $\xi^{+}$ of the unperturbed problem
$\ddot x = \nabla U(x)$ (that is, $x_{0}(t) = \omega t^{\frac{2}{\alpha + 2}} \xi^{+}$) and $u_{\lambda}$ is a perturbation term, lying in a suitable functional space $X$.
Quite unexpectedly, a proper choice of $X$ can be made (essentially, $X$ is a space of functions with $L^{2}$-weak derivative, see Section~\ref{section-3} for more details), in order for the problem in the new unknown $u_{\lambda}$ to have a standard variational structure (despite the equation being considered on a non-compact time interval) and for $x(t) = x_{0}(t+\lambda) + u_{\lambda}(t)$ to satisfy the desired asymptotic properties (namely, $\dot{x}(t) \to 0$ and $x(t)/\lvert x(t) \rvert \to \xi^{+}$) whenever $u_{\lambda} \in X$ is a solution.
Even more, the problem for $u_{\lambda}$ can be solved by elementary perturbation arguments:
essentially, taking $\lambda \to +\infty$ equation \eqref{eq-main} reduces to the unperturbed problem
$\ddot x = \nabla U(x)$ and, due to the validity of a Hardy-type inequality in $X$, the implicit function theorem can be easily applied. Of course, a major drawback of this procedure lies in its purely perturbative nature: as a consequence, in Theorem~\ref{th-main} we need to assume that 
the initial condition $x_{0}$ lies in the set $\mathcal{T}(\xi^{+},R',\eta')$ defined in \eqref{cone}, for suitable $R'$ and $\eta'$.

In a forthcoming paper \cite{BDFT-21}, we will show how to use Theorem~\ref{th-main} as a tool in the search of entire parabolic and capture solutions in the restricted $(N+1)$-body problem.

\medskip

The plan of the paper is the following. In Section~\ref{section-2} we collect some simple preliminary formulas concerning positively homogeneous potentials and their central configurations. In Section~\ref{section-3} we give the definition of the functional space $X = \mathcal{D}^{1,2}_{0}(t_{0},+\infty)$, and we present some of its properties. In Section~\ref{section-4} we describe in more detail the general strategy (just very briefly sketched in the above discussion), so as to provide an outline of the proof of Theorem~\ref{th-main}; the complete proof is then given in Section~\ref{section-5}. Finally, in Section~\ref{section-6} we discuss some applications of the main result to probems of Celestial Mechanics.

\medskip

We end this introductory part by presenting some symbols and some notation used in the present paper. 
As already mentioned, by $\langle\cdot,\cdot\rangle$ and $\lvert\cdot\rvert$ we mean the mass scalar product \eqref{eq-scalarepesato} in $\mathbb{R}^d$ and the associated norm, respectively. Accordingly, $\nabla$ and $\nabla^2$ denotes the gradient and the Hessian matrix of a function with respect to this metric. The symbol $\lvert\cdot\rvert$ is used also for the operator norm of a matrix $A \in \mathbb{R}^{d \times d}$, that is, 
$\lvert A \rvert = \sup_{\lvert x \rvert \leq 1}\lvert Ax \rvert$.

\section{Preliminaries on homogeneous functions and central configurations}\label{section-2}

In this section we recall some well-known facts concerning positively homogeneous potentials and the related central configurations.

Let $\Sigma \subseteq \mathbb{R}^{d}$ be an open set such that $\lambda x \in \Sigma$ for every $x\in \Sigma$ and $\lambda> 0$. Let $U \colon \Sigma \to \mathbb{R}$ be a positively homogeneous function of degree $-\alpha<0$,
namely
\begin{equation}\label{eq-hom1}
U(\lambda x) = \lambda^{-\alpha} U(x),  \quad \text{for every $x\in \Sigma$ and $\lambda> 0$.}
\end{equation}
Throughout this section, we assume that $U$ is of class $\mathcal{C}^{2}$.
By differentiating \eqref{eq-hom1} with respect to $\lambda$, we have
\begin{equation*}
\langle \nabla U(\lambda x), x \rangle = -\alpha \, \lambda^{-\alpha-1} U(x), 
\quad \text{for every $x\in \Sigma$ and $\lambda> 0$,}
\end{equation*}
and so, for $\lambda=1$ we obtain the Euler's homogeneous function formula
\begin{equation}\label{Euler-formula}
\langle \nabla U(x),x \rangle = -\alpha \, U(x), \quad \text{for every $x\in \Sigma$.}
\end{equation}

By differentiating \eqref{eq-hom1} with respect to $x$ and dividing by $\lambda$, we deduce
\begin{equation*}
\nabla U(\lambda x) = \lambda^{-\alpha-1} \nabla U(x), \quad \text{for every $x\in \Sigma$ and $\lambda> 0$,}
\end{equation*}
and, by differentiating again,
\begin{equation*}
\nabla^{2} U(\lambda x) = \lambda^{-\alpha-2} \nabla^{2} U(x), \quad \text{for every $x\in \Sigma$ and $\lambda> 0$.}
\end{equation*}
These two formulas mean that $\nabla U$ and $\nabla^{2} U$ are positively homogeneous functions of degrees $-\alpha-1$ and $-\alpha-2$, respectively.

By differentiating \eqref{Euler-formula} with respect to $x$, we obtain
\begin{equation*}
\nabla^2 U(x) x+\nabla U(x) = - \alpha \, \nabla U(x), \quad \text{for every $x\in\Sigma$,}
\end{equation*}
and so
\begin{equation}\label{eq-diffsecom}
\nabla^2 U(x) x=-(\alpha+1) \nabla U(x), \quad \text{for every $x\in\Sigma$.}
\end{equation}

Let $\xi^{+} \in \mathcal{E}$ be a (normalized) central configuration. From \eqref{Euler-formula} we deduce that $\langle \nabla U(\xi^{+}),\xi^{+} \rangle = -\alpha  \, U(\xi^{+})$ and, as a consequence of the Lagrange multipliers theorem, we have
\begin{equation} \label{eq-diffprimocc}
\nabla U(\xi^{+}) = - \alpha \, U(\xi^{+}) \xi^{+}.
\end{equation}
By using \eqref{eq-diffprimocc} in \eqref{eq-diffsecom}, we infer that
\begin{equation}\label{eq-diffseccc}
\nabla^2 U(\xi^{+}) \xi^{+} = \alpha (\alpha+1) U(\xi^{+}) \xi^{+}.
\end{equation}
Formula \eqref{eq-diffseccc} will have a crucial role in Section~\ref{section-5.2}.

\section{The space $\mathcal{D}^{1,2}_{0}(t_{0},+\infty)$}\label{section-3}

As described in the introduction, a crucial role in our proof is played by the choice of the functional space. Indeed, after some change of variables we will be led to work in the space 
$\mathcal{D}^{1,2}_{0}(t_{0},+\infty)$,
with $t_{0}>0$, defined as the subset of vector-valued continuous functions $\varphi \colon \mathopen{[}t_{0},+\infty\mathclose{[ \to \mathbb{R}^d}$ which vanish at $t=t_{0}$ and can be written as primitives of $L^2$-functions. Precisely,
\begin{equation*}
\mathcal{D}^{1,2}_{0}(t_{0},+\infty) = \biggl{\{} \varphi \in \mathcal{C}(\mathopen{[}t_{0},+\infty\mathclose{[}) \colon \varphi(t) = \int_{t_{0}}^{t} v(s)\,\mathrm{d}s \text{ for some } v \in L^{2}(t_{0},+\infty)\biggr{\}}.
\end{equation*}
Readers familiar with the theory of Sobolev spaces (see, for instance, \cite[Chapter~8]{Br-11}) will immediately observe that any $\varphi \in \mathcal{D}^{1,2}_{0}(t_{0},+\infty)$ is differentiable in the sense of distributions, with $\dot{\varphi} = v$; conversely, locally integrable functions with distributional derivatives in $L^{2}(t_{0},+\infty)$ belong to $\mathcal{D}^{1,2}_{0}(t_{0},+\infty)$ as long as $\varphi(t_{0}) = 0$ (recall that continuity up to $t =t_{0}$ is automatically ensured whenever the weak derivative is in $L^{p}(t_{0},+\infty)$ for some $p \in \mathopen{[}1,+\infty\mathclose{]}$). Let us also observe that, by the Cauchy--Schwartz inequality, the pointwise estimate
\begin{equation}\label{Hardy0}
\lvert \varphi(t) \rvert  \leq \Vert \dot{\varphi} \Vert_{L^{2}} \sqrt{t-t_{0}} \leq \Vert \dot{\varphi} \Vert_{L^{2}} \sqrt{t}, \quad \text{for every $t \geq t_{0}$},
\end{equation}
holds true, implying that any $\varphi \in \mathcal{D}^{1,2}_{0}(t_{0},+\infty)$ grows at infinity at most as $\sqrt{t}$.
We stress once more that, in formula \eqref{Hardy0}, the symbol $| \cdot |$ stands for the norm coming from the mass scalar product, and the $L^2$-norm of $\dot \varphi$ has to be meant accordingly.

In the following, we will endow $\mathcal{D}^{1,2}_{0}(t_{0},+\infty)$ with the norm
\begin{equation}\label{defnorm}
\Vert\varphi\Vert=\biggl{(} \int_{t_{0}}^{+\infty} |\dot{\varphi}(t)|^{2} \,\mathrm{d}t \biggr{)}^{\!\frac{1}{2}}.
\end{equation}
As a consequence, \eqref{Hardy0} writes as
\begin{equation}\label{Hardy-sqrt}
\lvert \varphi(t) \rvert \leq \Vert \varphi \Vert \sqrt{t}, \quad \text{for every $t \geq t_{0}$},
\end{equation}
implying that $\varphi_{n}(t) \to \varphi(t)$ uniformly in $\mathopen{[}t_{0},K\mathclose{]}$ for any $K > t_{0}$, whenever $\varphi_{n} \to \varphi$ in $\mathcal{D}^{1,2}_{0}(t_{0},+\infty)$.

\begin{proposition}\label{hilbert}
The space $\mathcal{D}^{1,2}_{0}(t_{0},+\infty)$ is a Hilbert space containing the set $\mathcal{C}^{\infty}_{\mathrm{c}}(\mathopen{]}t_{0},+\infty\mathclose{[})$ as a dense subspace.
\end{proposition}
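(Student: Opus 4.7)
The Hilbert structure is essentially automatic: I would define the inner product $\langle\varphi,\psi\rangle:=\int_{t_{0}}^{+\infty}\langle\dot\varphi(t),\dot\psi(t)\rangle\,\mathrm{d}t$, which manifestly induces \eqref{defnorm}, and observe that positive definiteness follows from $\varphi(t_{0})=0$ (a primitive with vanishing derivative is identically zero). Completeness is inherited from that of $L^{2}(t_{0},+\infty;\mathbb{R}^{d})$: for a Cauchy sequence $(\varphi_{n})$ the derivatives $(\dot\varphi_{n})$ are Cauchy in $L^{2}$ and converge to some $v$, so the candidate limit $\varphi(t):=\int_{t_{0}}^{t}v(s)\,\mathrm{d}s$ lies in $\mathcal{D}^{1,2}_{0}(t_{0},+\infty)$ by construction and satisfies $\|\varphi-\varphi_{n}\|=\|v-\dot\varphi_{n}\|_{L^{2}}\to 0$.

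The density of $\mathcal{C}^{\infty}_{\mathrm{c}}(\mathopen{]}t_{0},+\infty\mathclose{[})$ is the substantive part. The first step I would take is to sharpen \eqref{Hardy-sqrt}: applying Cauchy--Schwarz to $\varphi(t)-\varphi(T)=\int_{T}^{t}\dot\varphi(s)\,\mathrm{d}s$ yields
\begin{equation*}
|\varphi(t)-\varphi(T)|\leq \sqrt{t-T}\,\|\dot\varphi\|_{L^{2}(T,t)}, \qquad t_{0}\leq T\leq t,
\end{equation*}
from which, either by taking $T=t_{0}$ and letting $t\to t_{0}^{+}$, or by sending $t\to+\infty$ followed by $T\to+\infty$, one upgrades \eqref{Hardy-sqrt} to the \emph{decay} estimates
\begin{equation*}
|\varphi(t)|=o\bigl{(}\sqrt{t-t_{0}}\bigr{)} \text{ as } t\to t_{0}^{+}, \qquad |\varphi(t)|=o\bigl{(}\sqrt{t}\bigr{)} \text{ as } t\to+\infty.
\end{equation*}

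With these at hand, I would run the standard cutoff-plus-mollification scheme. Pick smooth cutoffs with $\eta_{R}\equiv 1$ on $[t_{0},R]$, $\eta_{R}\equiv 0$ on $[2R,+\infty[$, $|\dot\eta_{R}|\leq C/R$, and analogously $\psi_{\delta}\equiv 0$ on $[t_{0},t_{0}+\delta]$, $\psi_{\delta}\equiv 1$ on $[t_{0}+2\delta,+\infty[$, $|\dot\psi_{\delta}|\leq C/\delta$. Differentiating $\psi_{\delta}\eta_{R}\varphi$ via Leibniz, the two main summands converge to $\dot\varphi$ in $L^{2}$ by dominated convergence, while the two correction terms satisfy
\begin{equation*}
\bigl{\|}\varphi\,\dot\eta_{R}\bigr{\|}_{L^{2}}^{2}\leq \frac{C^{2}}{R}\max_{[R,2R]}|\varphi|^{2}, \qquad \bigl{\|}\varphi\,\dot\psi_{\delta}\bigr{\|}_{L^{2}}^{2}\leq \frac{C^{2}}{\delta}\max_{[t_{0}+\delta,t_{0}+2\delta]}|\varphi|^{2},
\end{equation*}
and both tend to zero precisely by virtue of the refined decay estimates. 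The resulting approximants lie in $\mathcal{D}^{1,2}_{0}(t_{0},+\infty)$ with compact support inside $\mathopen{]}t_{0},+\infty\mathclose{[}$, and a standard convolution with a mollifier of sufficiently small support then delivers the required $\mathcal{C}^{\infty}_{\mathrm{c}}$ approximants.

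The delicate step is precisely this bridging between \eqref{Hardy-sqrt} and the approximation scheme: the bare bound allows $|\varphi|^{2}$ to grow linearly in $t$, which is critical in the sense that a crude estimate gives only $\|\varphi\,\dot\eta_{R}\|_{L^{2}}=O(1)$. The $o(\sqrt{t})$ sharpening is the minimal improvement that closes this gap; everything else reduces to routine Sobolev space machinery.
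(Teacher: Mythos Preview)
Your argument is correct. The Hilbert-space part coincides with the paper's; the density argument, however, follows a genuinely different route.

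The paper does not work with the $o(\sqrt{t})$ refinement at all. Instead it exploits the very definition of the space as ``primitives of $L^{2}$ functions'': it first approximates $\dot\varphi$ by $v_{n}\in\mathcal{C}^{\infty}_{\mathrm{c}}(\mathopen{]}t_{0},+\infty\mathclose{[})$ in $L^{2}$, and then sets $\varphi_{n}(t)=\gamma_{n}(t)\int_{t_{0}}^{t}v_{n}$, where $\gamma_{n}$ is a cutoff whose transition zone is pushed to scale $nT_{n}$ (with $T_{n}=\sup\mathrm{supp}\,v_{n}$). Because each $v_{n}$ has compact support, the primitive $\int_{t_{0}}^{t}v_{n}$ is eventually constant and bounded by $\sqrt{T_{n}}\,\|v_{n}\|_{L^{2}}$, so the cutoff correction contributes $\|v_{n}\|_{L^{2}}^{2}/n\to 0$ without any decay information on $\varphi$. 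Two side effects of this construction are that the approximants are already smooth (no separate mollification step) and that they automatically vanish near $t_{0}$ (since $v_{n}$ does), so no inner cutoff $\psi_{\delta}$ is needed either.

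Your approach is the more ``textbook'' cutoff-and-mollify scheme, and the point you single out---that the naive bound $|\varphi(t)|\leq\|\varphi\|\sqrt{t}$ is exactly critical and must be upgraded to $o(\sqrt{t})$---is precisely the crux. What your route buys is a statement of independent interest (the $o(\sqrt{t})$ decay of elements of $\mathcal{D}^{1,2}_{0}$), at the cost of three steps (two cutoffs plus mollification) where the paper uses one. The paper's route is shorter and more tailored to the space at hand, but hides the decay property that your argument makes explicit.
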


\begin{proof}
We first observe that the norm \eqref{defnorm} is induced by the scalar product 
\begin{equation*}
(\varphi_{1},\varphi_{2}) \mapsto \int_{t_{0}}^{+\infty} \bigl{\langle} \dot{\varphi}_{1}(t), \dot{\varphi}_{2}(t) \bigr{\rangle} \,\mathrm{d}t. 
\end{equation*}
Hence, to prove that $\mathcal{D}^{1,2}_{0}$ is a Hilbert space (for the rest of the proof, for briefness we omit the domain $(t_{0},+\infty)$ since no ambiguity is possible) we just need to show that Cauchy sequences are convergent. To this end, let us consider a Cauchy sequence $(\varphi_{n})_{n} \subseteq \mathcal{D}^{1,2}_{0}$. By definition of the norm, it follows that $(\dot{\varphi}_{n})_{n}$ is a Cauchy sequence in $L^{2}$; therefore, there exists $v \in L^{2}$ such that $\dot{\varphi}_{n} \to v$ in $L^{2}$. Setting $\varphi(t) = \int_{t_{0}}^{t} v(s)\,\mathrm{d}s$, we clearly have $\varphi \in \mathcal{D}^{1,2}_{0}$ and $\varphi_{n} \to \varphi$ in $\mathcal{D}^{1,2}_{0}$, thus proving the completeness of $\mathcal{D}^{1,2}_{0}$.

We now show the second part of the statement. Given $\varphi \in \mathcal{D}^{1,2}_{0}$, let us take $(v_{n})_{n} \subseteq \mathcal{C}^{\infty}_{\mathrm{c}}(\mathopen{]}t_{0},+\infty\mathclose{[})$ such that $v_{n} \to \dot{\varphi}$ in $L^{2}$ and define 
$J_n = \mathrm{supp}(v_{n})$. Setting $T_{n} = \sup J_n$ and taking 
$\gamma \in \mathcal{C}^\infty(\mathopen{[}0,+\infty\mathclose{[})$ such that $\gamma(t) = 1$ for $t \in \mathopen{[}0,1\mathclose{]}$ and $\gamma(t) = 0$ for $t \geq 2$,
let us define, for $t \geq t_{0}$, 
\begin{equation*}
\gamma_{n}(t) = \gamma\biggl{(}\dfrac{t-t_{0}}{n T_{n}}\biggr{)}
\end{equation*}
and
\begin{equation*}
\varphi_{n}(t) = \gamma_{n}(t) \int_{t_{0}}^{t} v_{n}(s)\,\mathrm{d}s.
\end{equation*}
Clearly, $\varphi_{n} \in \mathcal{C}^{\infty}_{\mathrm{c}}(\mathopen{]}t_{0},+\infty\mathclose{[})$; we thus need to show that
$\dot{\varphi}_{n} \to \dot{\varphi}$ in $L^{2}$. To this end, we first observe that
\begin{equation}\label{phipunto}
\dot{\varphi}_{n}(t) = \gamma_{n}(t)v_{n}(t) + \frac{1}{n T_{n}}\dot{\gamma}\biggl{(}\dfrac{t-t_{0}}{n T_{n}}\biggr{)} \int_{t_{0}}^{t} v_{n}(s)\,\mathrm{d}s.
\end{equation}
The first term on the right-hand side converges to $\dot{\varphi}$ in $L^{2}$, since
\begin{align*}
\Vert \gamma_{n} v_{n} - \dot{\varphi} \Vert_{L^{2}} & \leq \Vert \gamma_{n} (v_{n} - \dot{\varphi}) \Vert_{L^{2}} + \Vert (\gamma_{n} - 1)\dot{\varphi}\Vert_{L^{2}}
\\ & \leq \Vert \gamma \Vert_{L^{\infty}} \Vert v_{n} - \dot{\varphi} \Vert_{L^{2}} + \biggl{(}\int_{t_{0}}^{+\infty} \lvert (\gamma_{n}(t)-1) \dot{\varphi}(t)\rvert^{2} \,\mathrm{d}t\biggr{)}^{\!\frac{1}{2}}
\end{align*}
and the integral goes to zero by the dominated convergence theorem. On the other hand, the second term on the right-hand side of \eqref{phipunto} goes to zero in $L^{2}$; indeed, since by the Cauchy--Schwartz inequality
\begin{equation*}
\biggl{\lvert} \int_{t_{0}}^{t} v_{n}(s)\,\mathrm{d}s \biggr{\rvert} \leq \sqrt{T_{n}} \, \Vert v_{n} \Vert_{L^{2}}, \quad \text{for every $t \geq t_{0}$},
\end{equation*}
it holds
\begin{align*}
\int_{t_{0}}^{+\infty} \biggl{\lvert} \dfrac{1}{n T_{n}}\dot{\gamma}\biggl{(}\dfrac{t-t_{0}}{ n T_{n}}\biggr{)} \int_{t_{0}}^{t} v_{n}(s)\,\mathrm{d}s \biggr{\rvert}^{2} \,\mathrm{d}t &
\leq \dfrac{\Vert v_{n} \Vert^{2}_{L^{2}}}{n^{2} T_{n}} \int_{t_{0}}^{+\infty} \biggl{\lvert} \dot{\gamma}\biggl{(}\dfrac{t-t_{0}}{n T_{n}}\biggr{)} \biggr{\rvert}^{2} \,\mathrm{d}t \\
& = \frac{\Vert v_{n} \Vert^{2}_{L^{2}}}{n} \int_{0}^{+\infty} \lvert \dot{\gamma}(s) \rvert^{2} \,\mathrm{d}s,
\end{align*}
which goes to zero as $n \to \infty$, since $\Vert v_{n} \Vert_{L^{2}} \to \Vert \dot{\varphi} \Vert_{L^{2}}$.
\end{proof}

\begin{remark}\label{rem-2.1}
The notation ``$\mathcal{D}^{1,2}_{0}$'' is borrowed from the theory of Sobolev spaces for functions in $\mathbb{R}^{N}$. Indeed, for $p \in \mathopen{[}1,N\mathclose{[}$ the space $\mathcal{D}^{1,p}(\mathbb{R}^{N})$ is defined as the completion of $\mathcal{C}_{\mathrm{c}}^\infty(\mathbb{R}^{N})$ with respect to the norm $\bigl{(} \int \lvert \nabla \varphi \rvert^{p} \bigr{)}^{\!\frac{1}{p}}$ (by the Sobolev--Gagliardo--Nirenberg inequality, it turns out that $\mathcal{D}^{1,p}(\mathbb{R}^{N}) \subset L^{p^{*}}(\mathbb{R}^{N})$, with $p^{*}$ the Sobolev critical exponent). Proposition~\ref{hilbert} thus shows that an analogous characterization can be given for $\mathcal{D}^{1,2}_{0}(t_{0},+\infty)$ (here, the subscript $0$ has been added to mean that functions vanish for $t = t_{0}$); however, a more direct and elementary construction seems to be preferable in the $1$-dimensional setting.
\end{remark}

As a consequence of Proposition~\ref{hilbert}, we can finally state and prove a further inequality, showing
that $\mathcal{D}^{1,2}_{0}(t_{0},+\infty)$ is continuously embedded in a weighted $L^{2}$-space. Precisely, we have the following classical Hardy-type inequality (cf.~\cite{HaLi}).

\begin{proposition}\label{prop-Hardy}
For every $\varphi \in \mathcal{D}^{1,2}_{0}(t_{0},+\infty)$, it holds that
\begin{equation}\label{Hardy}
\int_{t_{0}}^{+\infty} \dfrac{\lvert \varphi(t) \rvert^{2}}{t^{2}} \,\mathrm{d}t \leq 4 \int_{t_{0}}^{+\infty} \lvert\dot{\varphi}(t)\rvert^{2} \,\mathrm{d}t.
\end{equation}
\end{proposition}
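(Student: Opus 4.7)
The plan is to reduce to the case of a smooth, compactly supported test function via the density statement in Proposition~\ref{hilbert}, establish the inequality there by a clean integration by parts trick, and then pass to the limit.

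First, I would exploit density: given $\varphi \in \mathcal{D}^{1,2}_{0}(t_{0},+\infty)$, Proposition~\ref{hilbert} supplies a sequence $(\varphi_{n})_{n} \subseteq \mathcal{C}^{\infty}_{\mathrm{c}}(\mathopen{]}t_{0},+\infty\mathclose{[})$ with $\varphi_{n} \to \varphi$ in the norm \eqref{defnorm}. By \eqref{Hardy-sqrt}, $\varphi_{n} \to \varphi$ uniformly on compact subsets, hence pointwise on $\mathopen{[}t_{0},+\infty\mathclose{[}$. Assuming the inequality is established for each $\varphi_{n}$, the right-hand side converges to $4 \Vert \dot\varphi \Vert_{L^{2}}^{2}$, while Fatou's lemma gives
\begin{equation*}
\int_{t_{0}}^{+\infty} \dfrac{\lvert \varphi(t) \rvert^{2}}{t^{2}} \,\mathrm{d}t \leq \liminf_{n \to \infty}\int_{t_{0}}^{+\infty} \dfrac{\lvert \varphi_{n}(t) \rvert^{2}}{t^{2}} \,\mathrm{d}t \leq 4 \int_{t_{0}}^{+\infty} \lvert \dot{\varphi}(t) \rvert^{2} \,\mathrm{d}t,
\end{equation*}
which is the desired conclusion.

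It therefore remains to prove \eqref{Hardy} for $\varphi \in \mathcal{C}^{\infty}_{\mathrm{c}}(\mathopen{]}t_{0},+\infty\mathclose{[})$. The key identity is $\frac{\mathrm{d}}{\mathrm{d}t}\bigl{(}-\tfrac{1}{t}\bigr{)} = \tfrac{1}{t^{2}}$, combined with $\frac{\mathrm{d}}{\mathrm{d}t}\lvert \varphi(t) \rvert^{2} = 2 \langle \varphi(t),\dot{\varphi}(t)\rangle$ (this uses bilinearity and symmetry of the mass scalar product \eqref{eq-scalarepesato}, so the formula is valid in the chosen metric). Integrating by parts, and noting that both boundary contributions vanish — at $t = t_{0}$ because $\varphi(t_{0})=0$, and at $t = +\infty$ by compactness of the support — yields
\begin{equation*}
\int_{t_{0}}^{+\infty} \dfrac{\lvert \varphi(t) \rvert^{2}}{t^{2}} \,\mathrm{d}t = 2 \int_{t_{0}}^{+\infty} \dfrac{\langle \varphi(t),\dot{\varphi}(t)\rangle}{t} \,\mathrm{d}t.
\end{equation*}
Applying Cauchy--Schwarz (first pointwise, then in $L^{2}$) to the right-hand side gives
\begin{equation*}
\int_{t_{0}}^{+\infty} \dfrac{\lvert \varphi(t) \rvert^{2}}{t^{2}} \,\mathrm{d}t \leq 2 \biggl{(} \int_{t_{0}}^{+\infty} \dfrac{\lvert \varphi(t) \rvert^{2}}{t^{2}} \,\mathrm{d}t \biggr{)}^{\!\frac{1}{2}} \biggl{(} \int_{t_{0}}^{+\infty} \lvert \dot{\varphi}(t) \rvert^{2} \,\mathrm{d}t \biggr{)}^{\!\frac{1}{2}}.
\end{equation*}
Dividing by the first square root (if it is zero, \eqref{Hardy} is trivial) and squaring produces the factor $4$ on the right-hand side of \eqref{Hardy}.

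There is no substantial obstacle here: the only point deserving a brief check is that the integration by parts is legitimate, which is immediate on $\mathcal{C}^{\infty}_{\mathrm{c}}(\mathopen{]}t_{0},+\infty\mathclose{[})$ since the integrand has compact support away from $t_{0}$; the density argument then removes the smoothness/compact-support restriction and the pointwise boundary condition at $t_{0}$ is preserved because functions in $\mathcal{D}^{1,2}_{0}(t_{0},+\infty)$ vanish there by definition.
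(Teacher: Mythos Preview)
Your proof is correct and follows essentially the same approach as the paper: integration by parts against $-1/t$ for compactly supported test functions, Cauchy--Schwarz, and then density from Proposition~\ref{hilbert}. Your version is slightly more explicit about the limit passage (invoking Fatou), whereas the paper simply appeals to density, but the argument is the same.
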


\begin{proof}
Let us first assume that $\varphi \in \mathcal{C}^{\infty}_{\mathrm{c}}(\mathopen{]}t_{0},+\infty\mathclose{[})$. In this case, integrating by parts
and using the Cauchy--Schwartz inequality, we directly obtain
\begin{align*}
& \int_{t_{0}}^{+\infty} \dfrac{\lvert\varphi(t) \rvert^{2}}{t^{2}} \,\mathrm{d}t 
= -\int_{t_{0}}^{+\infty} \frac{\mathrm{d}}{\mathrm{d}t} \biggl{(}\dfrac{1}{t}\biggr{)}\lvert\varphi(t) \rvert^{2} \,\mathrm{d}t  = 2 \int_{t_{0}}^{+\infty} \frac{\langle \varphi(t),\dot\varphi(t) \rangle}{t}\,\mathrm{d}t \\
& \leq  \int_{t_{0}}^{+\infty} \dfrac{2\lvert\varphi(t) \rvert \lvert\dot{\varphi}(t) \rvert}{t} \,\mathrm{d}t  
\leq 2 \biggl{(} \int_{t_{0}}^{+\infty} \dfrac{\lvert\varphi(t) \rvert^{2}}{t^{2}} \,\mathrm{d}t \biggr{)}^{\!\frac{1}{2}}
\biggl{(}\int_{t_{0}}^{+\infty} \lvert\dot{\varphi}(t) \rvert^{2} \,\mathrm{d}t \biggr{)}^{\!\frac{1}{2}},
\end{align*}
thus proving \eqref{Hardy}. The general case follows from the density of $\mathcal{C}^{\infty}_{\mathrm{c}}(\mathopen{]}t_{0},+\infty\mathclose{[})$ in
$\mathcal{D}^{1,2}_{0}(t_{0},+\infty)$, proved in Proposition~\ref{hilbert}.
\end{proof}

\section{The general strategy}\label{section-4}

In this section we give an outline of the proof of Theorem~\ref{th-main}. Notice that, due to the homogeneity of $U$ and the fact that $\Sigma$ is an open set, it is not restrictive to suppose that
\begin{equation}\label{cone-sigma}
\mathcal{T}(\xi^{+},R,\eta) \subseteq \mathrm{cl}\bigl{(}\mathcal{T}(\xi^{+},R,\eta)\bigr{)} \subseteq \Sigma,
\end{equation}
where $\mathrm{cl}(\mathcal{T}(\xi^{+},R,\eta))$ denotes the closure of $\mathcal{T}(\xi^{+},R,\eta)$, see the definition \eqref{cone}.
This will be implicitly assumed throughout the paper.

\subsection{Changing variables}\label{section-4.1}

The first step in our proof consists in regarding equation \eqref{eq-main} as a perturbation at infinity of the problem
\begin{equation}\label{unperturbed}
\ddot{x} = \nabla U(x), 
\end{equation}
by the introduction of a suitable parameter. This can be done using a scale invariance of the problem; precisely, let $t_{0}\geq 1$ be such that
\begin{equation}\label{cond-t0}
\dfrac{\omega}{2} \, t_{0}^{\frac{2}{\alpha+2}} > R \quad \text{ and } \quad
 \dfrac{2 t_{0}^{\frac{2-\alpha}{2(\alpha+2)}}-1}{2 t_{0}^{\frac{2-\alpha}{2(\alpha+2)}}+1}> \eta,
\end{equation}
and, for every $\varepsilon>0$, $t \geq t_{0}$ and $y \in \mathcal{T}(\xi^{+},\varepsilon^{\frac{3}{2+\alpha}} R,\eta)$, let us set
\begin{equation}\label{def-We}
W_{\varepsilon}(t,y)= \varepsilon^{-\frac{3\alpha}{2+\alpha}} \, W\biggl{(}\dfrac{t-t_{0}}{\varepsilon^{\frac{3}{2}}}, \dfrac{y}{\varepsilon^{\frac{3}{2+\alpha}}}\biggr{)}.
\end{equation}
It is easy to check that, for all $x_{0}\in\mathcal{T}(\xi^{+},R,\eta)$, a function $x \colon \mathopen{[}0,+\infty\mathclose{[} \to \mathbb{R}^{d}$ is a solution of \eqref{eq-main} with $x(0)=x_{0}$ 
if and only if the function $y \colon \mathopen{[}t_{0},+\infty\mathclose{[} \to \mathbb{R}^d$ defined as
\begin{equation*}
y(t) = \varepsilon^{\frac{3}{2+\alpha}} x\biggl{(} \dfrac{t-t_{0}}{\varepsilon^{\frac{3}{2}}}\biggr{)}
\end{equation*}
is a solution of the equation
\begin{equation}\label{eq-y0}
\ddot{y} = \nabla U(y) + \nabla W_{\varepsilon}(t,y), 
\end{equation}
with $y(t_{0})=\varepsilon^{\frac{3}{2+\alpha}} \, x_{0}$. Setting, for $t \geq t_{0}$ and $y \in \mathcal{T}(\xi^{+}, \varepsilon^{\frac{3}{2+\alpha}} R,\eta)$,
\begin{equation}\label{def-Ue}
U_{\varepsilon}(t,y)=  U(y) + W_{\varepsilon}(t,y), 
\end{equation}
equation \eqref{eq-y0} reads as
\begin{equation}\label{eq-y}
\ddot{y} = \nabla U_{\varepsilon}(t,y).
\end{equation}
By the assumptions on $W$, it is readily checked that $U_{\varepsilon}$ can be smoothly extended, when $\varepsilon \to 0^{+}$, to the $-\alpha$-homogeneous potential
\begin{equation}\label{def-U0}
U_{0}(t,y) = U(y)
\end{equation}
(see the beginning of Section~\ref{section-5.1} for more details).  
In this way, we can consider equation \eqref{eq-y} even when $\varepsilon = 0$, provided that we look for solutions $y$ which are bounded away from the origin.

We actually look for solutions to \eqref{eq-y} with a special form. More precisely, denoting by $y_{0}$ the homothetic parabolic solution of the unperturbed problem \eqref{unperturbed} having direction $\xi^{+}$, that is
\begin{equation}\label{def-y0}
y_{0}(t) = \omega \, t^{\frac{2}{\alpha+2}} \xi^{+}, \quad \text{where $\omega=\biggl{(} \dfrac{(\alpha+2)^{2}}{2} \, U(\xi^{+}) \biggr{)}^{\!\frac{1}{\alpha+2}}$,}
\end{equation}
and fixing a function $w \in \mathcal{C}^{2}(\mathopen{[}t_{0},+\infty \mathclose{[})$ satisfying
\begin{equation}\label{def-w}
w(t_{0}) = 1 \quad \text{ and } \quad w(t) = 0, \; \text{for every $t \geq t_{0}+1$,}
\end{equation}
we look for solutions to \eqref{eq-y} of the form
\begin{equation}\label{def-y} 
y(t) = y_{0}(t) + \sigma w(t) + \varphi(t),
\end{equation}
where $\sigma \in \mathbb{R}^{d}$ is a small parameter and $\varphi$ is the new unknown function. 
Setting
\begin{equation}\label{def-ysigma}
y_{\sigma}(t) =  y_{0}(t) + \sigma w(t),
\end{equation}
the equation for $\varphi$ thus becomes the two-parameter equation
\begin{equation*}
\ddot{\varphi} = \nabla U_{\varepsilon}(t,y_{\sigma}(t) + \varphi) - \ddot y_{\sigma}(t),
\end{equation*}
which we will write as
\begin{equation}\label{eq-phi}
\ddot{\varphi} = \nabla K_{\varepsilon,\sigma}(t,\varphi) - h_{\varepsilon,\sigma}(t),
\end{equation}
where 
\begin{equation}\label{def-h}
h_{\varepsilon,\sigma}(t) = \ddot{y}_{\sigma}(t) - \nabla U_{\varepsilon}(t,y_{\sigma}(t))
\end{equation}
and
\begin{equation}\label{def-K}
K_{\varepsilon,\sigma}(t,\varphi) = U_{\varepsilon}(t,y_{\sigma}(t) + \varphi) - U_{\varepsilon}(t,y_{\sigma}(t)) - \bigl{\langle} \nabla U_{\varepsilon}(t,y_{\sigma}(t)),\varphi \bigr{\rangle}.
\end{equation}

The crucial point in our approach is that solutions to \eqref{eq-phi} will be required to belong to the functional space $\mathcal{D}^{1,2}_{0}(t_{0},+\infty)$. Notice that $\varphi(t_{0}) = 0$ implies that $y(t_{0}) = \omega \xi^{+} + \sigma$;
being
\begin{equation}\label{def-xy}
x(t) =  \varepsilon^{-\frac{3}{2+\alpha}}  y\bigl{(} \varepsilon^{\frac{3}{2}}t + t_{0} \bigr{)},
\end{equation}
we thus find
\begin{equation*}
x(0) = \varepsilon^{-\frac{3}{2+\alpha}} \, \bigl{(} \omega \xi^{+} + \sigma \bigr{)}.  
\end{equation*}
As proved in Section~\ref{section-5.3}, by varying $\varepsilon$ and $\sigma$, a set of the form $\mathcal{T}(\xi^{+},R',\eta')$ is thus covered.
Moreover, and more remarkably, due to the fact that $\varphi \in \mathcal{D}^{1,2}_{0}(t_{0},+\infty)$ is a lower order term, for $t \to +\infty$, with respect to $y_{0}$ (since $\alpha<2$),
we will prove (see again Section~\ref{section-5.3}) that the behavior of $x$ at infinity is similar to the one of $y_{0}$: in particular, $x$ is a \textit{parabolic} solution of \eqref{eq-main} asymptotic to the central configuration $\xi^{+}$.

\subsection{A perturbation argument}\label{section-4.2}

In order to find solutions $\varphi \in \mathcal{D}^{1,2}_{0}(t_{0},+\infty)$ to equation \eqref{eq-phi}, we use a perturbative approach: more precisely, being $\varphi \equiv 0$ a solution to \eqref{eq-phi} for $\varepsilon = 0$ and $\sigma = 0$, solutions for $\varepsilon$ and $\sigma$ small enough will be found by an application of the implicit function theorem. 

Taking advantage of the variational structure of \eqref{eq-phi}, we consider the action functional
\begin{equation}\label{def-A}
\mathcal{A}_{\varepsilon,\sigma}(\varphi) = \int_{t_{0}}^{+\infty} \biggl{(} \frac{1}{2} \lvert \dot{\varphi}(t) \rvert^{2} + K_{\varepsilon,\sigma}(t,\varphi(t)) - \bigl{\langle} h_{\varepsilon,\sigma}(t), \varphi(t) \bigr{\rangle} \biggr{)} \,\mathrm{d}t.
\end{equation}
It will be proved (see Proposition~\ref{prop-3.1}) that, when $\varepsilon$ and $\sigma$ are small enough, such a functional is well-defined and of class $\mathcal{C}^{2}$ on a suitable neighborhood of the origin in $\mathcal{D}^{1,2}_{0}(t_{0},+\infty)$ and that the three-variable function 
\begin{equation*}
F \colon (\varepsilon,\sigma,\varphi) \mapsto \mathrm{d} \mathcal{A}_{\varepsilon,\sigma}(\varphi) \in \bigl{(}\mathcal{D}^{1,2}_{0}(t_{0},+\infty)\bigr{)}^{\!*}
\end{equation*}
is continuous and has a continuous differential with respect to $\varphi$ (here, the symbol $\bigl{(}\mathcal{D}^{1,2}_{0}(t_{0},+\infty)\bigr{)}^{\!*}$ denotes the dual space of $\mathcal{D}^{1,2}_{0}(t_{0},+\infty)$). 
By the implicit function theorem, the equation
\begin{equation*}
\mathrm{d} \mathcal{A}_{\varepsilon,\sigma}(\varphi) = 0
\end{equation*}
can thus be solved with respect to $\varphi$, for $\varepsilon$ and $\sigma$ small enough, whenever 
\begin{equation*}
\mathrm{D}_{\varphi} F(0,0,0) = \mathrm{d}^{2} \mathcal{A}_{0,0}(0)
\end{equation*}
is invertible. We will see that this is actually the case (see \eqref{diff-for} and Proposition~\ref{prop-3.2}), thus providing critical points $\varphi = \varphi(\varepsilon,\sigma)$ of $\mathcal{A}_{\varepsilon,\sigma}(\varphi)$ and, eventually, solutions to equation \eqref{eq-phi} belonging to the space $\mathcal{D}^{1,2}_{0}(t_{0},+\infty)$.

\section{Proof of Theorem~\ref{th-main}}\label{section-5}

In this section we give the proof of Theorem~\ref{th-main}. More precisely, in Section~\ref{section-5.1} we establish some preliminary estimates, while in Section~\ref{section-5.2} we illustrate the perturbation argument. The proof will be finally described in Section~\ref{section-5.3}.

\begin{remark}\label{rem-moregeneral}
By a careful inspection of the proof one could check that it is sufficient to assume that $W$ is only continuous with respect to time.
\end{remark}

\subsection{Preliminary estimates}\label{section-5.1}

Before starting with the proof, we first observe that, from assumption \eqref{hp-main}, we can fix a constant $C > 0$ such that
\begin{align}
&\lvert W(t,x) \rvert \leq \frac{C}{\lvert x \rvert^{\beta}},&
&\text{for every $(t,x)\in\mathopen{[}0,+\infty\mathclose{[}\times \mathcal{T}(\xi^+,R,\eta)$,}& \label{W1}\\
&\lvert \nabla W(t,x) \rvert \leq \frac{C}{\lvert x \rvert^{\beta+1}},&
&\text{for every $(t,x)\in\mathopen{[}0,+\infty\mathclose{[}\times \mathcal{T}(\xi^+,R,\eta)$,}& \label{W2}\\
&\lvert \nabla^{2} W(t,x) \rvert \leq \frac{C}{\lvert x \rvert^{\beta+2}},&
&\text{for every $(t,x)\in\mathopen{[}0,+\infty\mathclose{[}\times \mathcal{T}(\xi^+,R,\eta)$.}& \label{W3}\\
\end{align}
As a consequence, for every $\varepsilon \in \mathopen{]}0,1\mathclose{[}$, the potential $W_{\varepsilon}$ defined in \eqref{def-We} satisfies
\begin{align}
&\lvert W_{\varepsilon}(t,y) \rvert \leq \frac{C \varepsilon^{\frac{3(\beta-\alpha)}{2+\alpha}}}{\lvert y \rvert^{\beta}},&
& \text{for every $(t,y) \in \mathopen{[}t_{0},+\infty\mathclose{[}\times \mathcal{T}(\xi^{+},\varepsilon^{\frac{3}{2+\alpha}} R,\eta)$,}& \label{We1} \\
&\lvert \nabla W_{\varepsilon}(t,y) \rvert \leq \frac{C \varepsilon^{\frac{3(\beta-\alpha)}{2+\alpha}}}{\lvert y \rvert^{\beta+1}},&
& \text{for every $(t,y) \in \mathopen{[}t_{0},+\infty\mathclose{[}\times \mathcal{T}(\xi^{+},\varepsilon^{\frac{3}{2+\alpha}} R,\eta)$,}& \label{We2} \\
&\lvert \nabla^{2} W_{\varepsilon}(t,y) \rvert \leq \frac{C \varepsilon^{\frac{3(\beta-\alpha)}{2+\alpha}}}{\lvert y \rvert^{\beta+2}},&
& \text{for every $(t,y) \in \mathopen{[}t_{0},+\infty\mathclose{[}\times \mathcal{T}(\xi^{+},\varepsilon^{\frac{3}{2+\alpha}} R,\eta)$.}& \label{We3} \\
\end{align}

By the $-\alpha$-homogeneity of $U$, the fact that $U(x) = |x|^{-\alpha} \, U(x/|x|)$ for all $x\in\mathrm{cl}\bigl{(}\mathcal{T}(\xi^{+},R,\eta)\bigr{)} \subseteq \Sigma$, together with similar equalities for $\nabla U$ and $\nabla^{2} U$, leads to the following inequalities
\begin{align}
&\lvert U(x) \rvert \leq \dfrac{C'}{|x|^{\alpha}},&
&\text{for every $x\in\mathcal{T}(\xi^+,R,\eta)$,}& \label{cond-U0}\\
&\lvert \nabla U(x)\rvert \leq \dfrac{C'}{|x|^{\alpha+1}},&
&\text{for every $x\in\mathcal{T}(\xi^+,R,\eta)$,}& \label{cond-U1}\\
&\lvert \nabla^{2} U(x) \rvert \leq \dfrac{C'}{|x|^{\alpha+2}},&
&\text{for every $x\in\mathcal{T}(\xi^+,R,\eta)$,}& \label{cond-U2}
\end{align}
where $C'>0$ is a suitable constant. Recalling the definitions of $U_\varepsilon$ in \eqref{def-Ue} and of $U_{0}$ in \eqref{def-U0}, from conditions \eqref{cond-U0}, \eqref{cond-U1}, \eqref{cond-U2} and the fact that $\alpha<\beta$, we find $C'' > 0$ such that, for every $\varepsilon \in \mathopen{[}0,1\mathclose{[}$,
\begin{align}
&\lvert U_{\varepsilon}(t,y) \rvert \leq \frac{C''}{\lvert y \rvert^{\alpha}},&
& \text{for every $(t,y) \in \mathopen{[}t_{0},+\infty\mathclose{[}\times \mathcal{T}(\xi^+,R,\eta)$,}& \label{Ue1} \\
&\lvert \nabla U_{\varepsilon}(t,y) \rvert \leq \frac{C''}{\lvert y \rvert^{\alpha+1}},&
& \text{for every $(t,y) \in \mathopen{[}t_{0},+\infty\mathclose{[}\times \mathcal{T}(\xi^+,R,\eta)$,}& \label{Ue2} \\
&\lvert \nabla^{2} U_{\varepsilon}(t,y) \rvert \leq \frac{C''}{\lvert y \rvert^{\alpha+2}},&
& \text{for every $(t,y) \in \mathopen{[}t_{0},+\infty\mathclose{[}\times \mathcal{T}(\xi^+,R,\eta)$.}& \label{Ue3}
\end{align}
Let us notice that from \eqref{We1}, \eqref{We2}, \eqref{We3} and the fact that $\alpha<\beta$ it easily follows that 
\begin{equation}\label{convergenza}
U_{\varepsilon}(t,y) \to U(y), \quad \text{as $\varepsilon \to 0^{+}$},
\end{equation}
where the above convergence is meant in the sense that 
$U_{\varepsilon}(t,y) \to U(y)$ and $\nabla^{i} U_{\varepsilon}(t,y) \to \nabla^{i} U(y)$, for $i=1,2$, uniformly in $\mathopen{[}t_{0},+\infty\mathclose{[}\times \mathcal{T}(\xi^+,R,\eta)$. 

We are now ready to start with the proof. As a first step, we are going to fix some constants; precisely let us set
\begin{equation*}
r = \dfrac{\omega}{\displaystyle 4 \max_{t \in \mathopen{[}t_{0},t_{0}+1\mathclose{]}} \lvert w(t) \rvert} 
\quad \text{ and } \quad
\rho = \dfrac{\omega}{4},
\end{equation*}
where the function $w$ is as in \eqref{def-w}. 
Accordingly, we define the sets
\begin{equation*}
B_{r} = \bigl{\{} \sigma \in \mathbb{R}^{d} \colon \lvert \sigma \rvert < r \bigr{\}}
\quad \text{ and } \quad
\Omega_{\rho} = \Bigl{\{} \varphi \in \mathcal{D}^{1,2}_{0}(t_{0},+\infty) \colon \Vert \varphi \Vert < \rho \Bigr{\}}.
\end{equation*}
Then, recalling the definition of $y_\sigma$ given in \eqref{def-ysigma}, we have the following preliminary result.

\begin{lemma}\label{stima}
For every $\sigma \in B_{r}$ and for every  $\varphi \in \Omega_{\rho}$, the following estimates hold true:
\begin{align}
&\lvert y_{\sigma}(t) + \varphi(t) \rvert \geq \dfrac{\omega}{2} \, t^{\frac{2}{\alpha+2}}, \quad \text{for every $t \geq t_{0}$,}
\label{y+phi}
\\
&\Big{\langle} \dfrac{y_{\sigma}(t) + \varphi(t)}{\lvert y_{\sigma}(t) + \varphi(t) \rvert}, \xi^{+} \Big{\rangle} > \dfrac{2 t^{\frac{2-\alpha}{2(\alpha+2)}}-1}{2 t^{\frac{2-\alpha}{2(\alpha+2)}}+1}, \quad \text{for every $t \geq t_{0}$.}
\label{y+phi++}
\end{align}
\end{lemma}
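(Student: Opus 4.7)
The plan is to write
\[
y_{\sigma}(t) + \varphi(t) = \omega t^{\frac{2}{\alpha+2}}\xi^{+} + \bigl(\sigma w(t) + \varphi(t)\bigr)
\]
and derive both estimates from a single uniform bound on the perturbation $B(t) := |\sigma| \, |w(t)| + |\varphi(t)|$ relative to the leading term $A(t) := \omega t^{\frac{2}{\alpha+2}}$. Since $|\xi^{+}| = 1$, the triangle inequality yields both $|y_{\sigma}(t)+\varphi(t)| \geq A(t) - B(t)$ and $|y_{\sigma}(t)+\varphi(t)| \leq A(t) + B(t)$, while the Cauchy--Schwartz inequality yields $\langle y_{\sigma}(t)+\varphi(t), \xi^{+}\rangle \geq A(t) - B(t)$.

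\textbf{Key bound.} The heart of the proof is the claim
\[
B(t) < \tfrac{\omega}{2}\sqrt{t}, \qquad \text{for every } t \geq t_{0}.
\]
I would verify this by splitting at $t = t_{0} + 1$. For $t \geq t_{0}+1$, one has $w(t) = 0$, so \eqref{Hardy-sqrt} combined with $\Vert\varphi\Vert < \rho = \omega/4$ gives $B(t) = |\varphi(t)| < (\omega/4)\sqrt{t}$. For $t \in \mathopen{[}t_{0}, t_{0}+1\mathclose{]}$, the choice of $r$ combined with $|\sigma| < r$ forces $|\sigma| \, |w(t)| < r \, \max_{[t_{0},t_{0}+1]}|w| = \omega/4$, while \eqref{Hardy0} gives $|\varphi(t)| \leq \Vert\varphi\Vert \sqrt{t-t_{0}} < \rho = \omega/4$; hence $B(t) < \omega/2 \leq (\omega/2)\sqrt{t}$, using $t_{0} \geq 1$.

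\textbf{Concluding.} Since $\alpha < 2$, the elementary inequality $\sqrt{t} \leq t^{\frac{2}{\alpha+2}}$ holds for all $t \geq 1$, so $B(t) < (\omega/2)\, t^{\frac{2}{\alpha+2}}$. Combined with $|y_{\sigma}+\varphi| \geq A - B$, this yields \eqref{y+phi}. For \eqref{y+phi++}, I would combine $\langle y_{\sigma}+\varphi,\xi^{+}\rangle \geq A - B > 0$ with $|y_{\sigma}+\varphi| \leq A + B$ to obtain
\[
\frac{\langle y_{\sigma}(t)+\varphi(t),\xi^{+}\rangle}{|y_{\sigma}(t)+\varphi(t)|} \geq \frac{A(t) - B(t)}{A(t) + B(t)}.
\]
Since $u \mapsto (A-u)/(A+u)$ is strictly decreasing, the \emph{strict} inequality $B(t) < (\omega/2)\sqrt{t}$ produces
\[
\frac{A(t) - B(t)}{A(t) + B(t)} > \frac{2 \, t^{\frac{2}{\alpha+2}} - \sqrt{t}}{2 \, t^{\frac{2}{\alpha+2}} + \sqrt{t}},
\]
and dividing numerator and denominator by $\sqrt{t}$ yields exactly the right-hand side of \eqref{y+phi++}.

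\textbf{Main obstacle.} The argument is essentially bookkeeping; the only delicate point is making sure that the two open-ball conditions $|\sigma| < r$ and $\Vert\varphi\Vert < \rho$, together with the precisely calibrated constants $r$ and $\rho$ and the support condition $w \equiv 0$ on $\mathopen{[}t_{0}+1,+\infty\mathclose{[}$, keep $B(t)$ \emph{strictly} below the threshold $(\omega/2)\sqrt{t}$ uniformly in $t \geq t_{0}$, so that the strict inequality required in \eqref{y+phi++} is actually obtained.
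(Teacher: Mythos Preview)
Your proof is correct and follows essentially the same route as the paper's. The only organizational difference is that the paper avoids the case split: it bounds $|\sigma w(t)| \le |\sigma|\max_{[t_0,t_0+1]}|w| < r\max|w| = \omega/4$ for \emph{all} $t\ge t_0$ (since $w$ vanishes outside $[t_0,t_0+1]$) and $|\varphi(t)| \le \Vert\varphi\Vert\sqrt{t} < \rho\sqrt{t} = (\omega/4)\sqrt{t}$ everywhere, then adds these and uses $\sqrt{t}\ge 1$ to reach the same $B(t) < (\omega/2)\sqrt{t}$ without distinguishing the two intervals.
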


\begin{proof}
Using \eqref{Hardy-sqrt} we have
\begin{align*}
\lvert y_{\sigma}(t) + \varphi(t) \rvert &\geq \lvert y_{0}(t) \rvert - \lvert \sigma w(t) \rvert - \lvert \varphi(t) \rvert \\
& \geq t^{\frac{2}{\alpha+2}} \biggl{(} \omega - \lvert \sigma \rvert \max_{t \in \mathopen{[}t_{0},t_{0}+1\mathclose{]}}\lvert w(t) \rvert t^{-\frac{2}{\alpha+2}} - \Vert \varphi \Vert t^{\frac{\alpha-2}{2(\alpha+2)}} \biggr{)} \\
& \geq t^{\frac{2}{\alpha+2}} \biggl{(}  \omega - r \max_{t \in \mathopen{[}t_{0},t_{0}+1\mathclose{]}}\lvert w(t) \rvert  - \rho \biggr{)} = \dfrac{\omega}{2} \, t^{\frac{2}{\alpha+2}},
\end{align*}
for every $t \geq t_{0}$. Therefore, \eqref{y+phi} is proved. As for \eqref{y+phi++}, we have
\begin{align*}
\Big{\langle} \dfrac{y_{\sigma}(t) + \varphi(t)}{\lvert y_{\sigma}(t) + \varphi(t) \rvert}, \xi^{+} \Big{\rangle}
&= \dfrac{ \omega t^{\frac{2}{\alpha+2}} + \sigma \langle w(t),\xi^{+} \rangle + \langle \varphi(t),\xi^{+} \rangle }{\lvert y_{0}(t) + \sigma w(t) + \varphi(t) \rvert}
\\
&\geq \dfrac{ \omega t^{\frac{2}{\alpha+2}} - \lvert \sigma \rvert \displaystyle \max_{t \in \mathopen{[}t_{0},t_{0}+1\mathclose{]}}\lvert w(t) \rvert - \|\varphi\| \sqrt{t}}{ \omega t^{\frac{2}{\alpha+2}} + \lvert \sigma \rvert \displaystyle \max_{t \in \mathopen{[}t_{0},t_{0}+1\mathclose{]}}\lvert w(t) \rvert + \|\varphi\| \sqrt{t}}
\\
&\geq \dfrac{ \omega t^{\frac{2-\alpha}{2(\alpha+2)}} - \lvert \sigma \rvert \displaystyle \max_{t \in \mathopen{[}t_{0},t_{0}+1\mathclose{]}}\lvert w(t) \rvert - \|\varphi\|}{ \omega t^{\frac{2-\alpha}{2(\alpha+2)}} + \lvert \sigma \rvert \displaystyle \max_{t \in \mathopen{[}t_{0},t_{0}+1\mathclose{]}}\lvert w(t) \rvert + \|\varphi\|}
\\
&> \dfrac{2 t^{\frac{2-\alpha}{2(\alpha+2)}} -1}{2 t^{\frac{2-\alpha}{2(\alpha+2)}} +1},
\end{align*}
for every $t \geq t_{0}$, where the last inequality follows from the fact that
\begin{equation*}
\lvert \sigma \rvert \max_{t \in \mathopen{[}t_{0},t_{0}+1\mathclose{]}}\lvert w(t) \rvert  + \|\varphi\| 
<  r \max_{t \in \mathopen{[}t_{0},t_{0}+1\mathclose{]}}\lvert w(t) \rvert + \rho = \dfrac{\omega}{2}.
\end{equation*}
Therefore, \eqref{y+phi++} is proved.
\end{proof}

Notice in particular that, by assumption \eqref{cond-t0}, for every $\sigma \in B_{r}$ and for every $\varphi \in \Omega_{\rho}$, it holds  
\begin{align*}
&\lvert y_{\sigma}(t) + \varphi(t) \rvert > R, \quad \text{for every $t \geq t_{0}$,}
\\
&\Big{\langle} \dfrac{y_{\sigma}(t) + \varphi(t)}{\lvert y_{\sigma}(t) + \varphi(t) \rvert}, \xi^{+} \Big{\rangle} > \eta, \quad \text{for every $t \geq t_{0}$,}
\end{align*}
so that the functions $h_{\varepsilon,\sigma}$ and $K_{\varepsilon,\sigma}$ (see \eqref{def-h} and \eqref{def-K}) are well-defined for 
$\varepsilon \in \mathopen{[}0,1\mathclose{[}$.
In particular, due to \eqref{convergenza}, the definitions are meaningful also when $\varepsilon = 0$.

Our next two lemmas give some estimates for $h_{\varepsilon,\sigma}$ and $K_{\varepsilon,\sigma}$.

\begin{lemma}\label{lem-h}
There exists $C_{h} > 0$ such that, for every $\varepsilon \in \mathopen{[}0,1\mathclose{[}$ and $\sigma \in B_{r}$,
\begin{equation}\label{eq-h-C}
\lvert h_{\varepsilon,\sigma}(t) \rvert \leq C_{h} \, t^{-\frac{2(\beta+1)}{\alpha+2}}, \quad \text{for every $t \geq t_{0}$}.
\end{equation}
\end{lemma}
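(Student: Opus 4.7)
My plan is to split the analysis of $h_{\varepsilon,\sigma}(t) = \ddot{y}_{\sigma}(t) - \nabla U_{\varepsilon}(t,y_{\sigma}(t))$ according to whether the cutoff $w$ is active, and to exploit the fact that $y_{0}$ is an exact solution of the unperturbed problem.

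First, I would observe that a direct calculation using \eqref{def-y0}, Euler's formula \eqref{eq-diffprimocc} and the definition of $\omega$ shows that $\ddot{y}_{0}(t) = \nabla U(y_{0}(t))$ for every $t \geq t_{0}$; in other words, $y_{0}$ is the homothetic parabolic solution of \eqref{unperturbed}. Using the definition \eqref{def-Ue} of $U_{\varepsilon} = U + W_{\varepsilon}$, this reduces the analysis of the ``homogeneous part'' of $h_{\varepsilon,\sigma}$ to an analysis of $W_{\varepsilon}$, which is quantitatively small at infinity.

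For $t \geq t_{0}+1$, the cutoff condition \eqref{def-w} gives $w(t) = 0$, so that $y_{\sigma}(t) = y_{0}(t)$ and $\ddot{y}_{\sigma}(t) = \ddot{y}_{0}(t) = \nabla U(y_{0}(t))$; hence
\begin{equation*}
h_{\varepsilon,\sigma}(t) = -\nabla W_{\varepsilon}(t,y_{0}(t)).
\end{equation*}
Applying \eqref{We2} together with $|y_{0}(t)| = \omega\, t^{2/(\alpha+2)}$ and noting that $\varepsilon^{3(\beta-\alpha)/(2+\alpha)} \leq 1$ (since $\varepsilon < 1$ and $\beta > \alpha$, the latter following from $4\beta - 3\alpha > 2$ combined with $\alpha < 2$), one immediately obtains a bound of the form $|h_{\varepsilon,\sigma}(t)| \leq C \, t^{-2(\beta+1)/(\alpha+2)}$ on $\mathopen{[}t_{0}+1,+\infty\mathclose{[}$, uniformly in $\varepsilon \in \mathopen{[}0,1\mathclose{[}$ and $\sigma \in B_{r}$.

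For $t \in \mathopen{[}t_{0}, t_{0}+1\mathclose{]}$ we proceed by boundedness on a compact interval. From \eqref{def-ysigma} and \eqref{def-y0} one sees that $\ddot{y}_{\sigma}(t) = \ddot{y}_{0}(t) + \sigma \ddot{w}(t)$ is bounded uniformly in $\sigma \in B_{r}$ since $w$ is $\mathcal{C}^{2}$ and $|\sigma|<r$. On the other hand, applying Lemma~\ref{stima} with $\varphi \equiv 0$ gives $|y_{\sigma}(t)| \geq (\omega/2)\, t_{0}^{2/(\alpha+2)} > 0$, so by \eqref{Ue2} the term $|\nabla U_{\varepsilon}(t,y_{\sigma}(t))|$ is also uniformly bounded. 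Since $t^{-2(\beta+1)/(\alpha+2)}$ is bounded below by a positive constant on the compact interval $\mathopen{[}t_{0},t_{0}+1\mathclose{]}$, enlarging the constant if necessary produces the required estimate on the whole half-line $\mathopen{[}t_{0},+\infty\mathclose{[}$.

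I do not foresee a genuine obstacle here: the whole point of inserting the cutoff $w$ in \eqref{def-y} is precisely that for $t \geq t_{0}+1$ the only contribution to $h_{\varepsilon,\sigma}$ comes from the perturbation $W_{\varepsilon}$, which decays fast by assumption \eqref{hp-main}; the subtlest item is the bookkeeping that confirms the rate $t^{-2(\beta+1)/(\alpha+2)}$ matches the one produced by substituting $|y_0(t)| \sim t^{2/(\alpha+2)}$ into \eqref{We2}, and checking that the uniformity in $(\varepsilon,\sigma)$ is genuinely preserved in both regimes.
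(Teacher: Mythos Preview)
Your proposal is correct and follows essentially the same approach as the paper: splitting into the compact interval $[t_0,t_0+1]$ (uniform boundedness via \eqref{Ue2} and the lower bound on $|y_\sigma|$) and the tail $[t_0+1,+\infty)$ (where $y_\sigma=y_0$ and only $-\nabla W_\varepsilon(t,y_0(t))$ survives, estimated by \eqref{We2}). Your write-up is actually slightly more explicit than the paper's in justifying $\beta>\alpha$ and the uniformity in $\varepsilon$.
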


\begin{proof}
In order to prove \eqref{eq-h-C}, we first suppose that $t \in \mathopen{[}t_{0},t_{0}+1\mathclose{]}$; notice that in this case we just need to show that $\max_{t \in \mathopen{[}t_{0},t_{0}+1\mathclose{]}} \lvert h_{\varepsilon,\sigma}(t) \rvert$ is bounded, independently of $\varepsilon$ and $\sigma$.
This is easily checked: indeed, on one hand by construction $y_{\sigma} \to y_{0}$ in $\mathcal{C}^{2}(\mathopen{[}t_{0},t_{0}+1\mathclose{]})$ as $\sigma\to 0$; on the other hand, since $\lvert y_{\sigma}(t) \rvert > R$ we have that $\nabla U_{\varepsilon}(t,y_{\sigma}(t))$ is bounded, uniformly in $\varepsilon$ and $\sigma$, by \eqref{Ue2}.

We now suppose that $t \geq t_{0}+1$; in this case, recalling that $y_{\sigma}(t) = y_{0}(t)$, we find 
\begin{equation*}
h_{\varepsilon,\sigma}(t) = \ddot{y}_{0}(t) - \nabla U_{\varepsilon}(t,y_{0}(t)) = -\nabla W_{\varepsilon}(t,y_{0}(t)),
\end{equation*}
for every $t \geq t_{0}+1$, and the conclusion follows by \eqref{We2}.
\end{proof}

\begin{lemma}\label{lem-K}
There exists $C_{K} > 0$ such that, for every $\varepsilon\in\mathopen{[}0,1\mathclose{[}$, $\sigma\in B_{r}$ and $\varphi\in\Omega_{\rho}$, the following inequalities hold true:
\begin{align}
&\lvert K_{\varepsilon,\sigma}(t,\varphi(t)) \rvert \leq \frac{C_{K} \lvert \varphi(t) \rvert^{2}}{t^{2}}, &\text{for every $t \geq t_{0}$,}& \label{K1} \\
&\lvert \nabla K_{\varepsilon,\sigma}(t,\varphi(t)) \rvert \leq \frac{C_{K} \lvert \varphi(t) \rvert}{t^{2}}, &\text{for every $t \geq t_{0}$,}& \label{K2} \\
&\lvert \nabla^{2} K_{\varepsilon,\sigma}(t,\varphi(t)) \rvert \leq \frac{C_{K}}{t^{2}}, &\text{for every $t \geq t_{0}$.}& \label{K3}
\end{align}
\end{lemma}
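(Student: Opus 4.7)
The plan is to recognize $K_{\varepsilon,\sigma}$ as the second-order Taylor remainder of $U_{\varepsilon}(t,\cdot)$ at the point $y_{\sigma}(t)$, and then obtain all three estimates by differentiating with respect to $\varphi$ and applying the pointwise bound \eqref{Ue3} on $\nabla^{2}U_{\varepsilon}$ together with the lower bound \eqref{y+phi} from Lemma~\ref{stima}. The first step is to write, for $\varphi \in \Omega_{\rho}$,
\begin{equation*}
K_{\varepsilon,\sigma}(t,\varphi(t)) = \int_{0}^{1} (1-s) \, \bigl{\langle} \nabla^{2} U_{\varepsilon}(t,y_{\sigma}(t) + s\varphi(t)) \varphi(t), \varphi(t) \bigr{\rangle} \,\mathrm{d}s,
\end{equation*}
and, differentiating in $\varphi$,
\begin{equation*}
\nabla K_{\varepsilon,\sigma}(t,\varphi(t)) = \int_{0}^{1} \nabla^{2} U_{\varepsilon}(t,y_{\sigma}(t)+s\varphi(t)) \varphi(t) \,\mathrm{d}s, \qquad \nabla^{2} K_{\varepsilon,\sigma}(t,\varphi(t)) = \nabla^{2} U_{\varepsilon}(t,y_{\sigma}(t)+\varphi(t)).
\end{equation*}

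Next, I would observe that Lemma~\ref{stima} applies to $y_{\sigma}(t) + s\varphi(t)$ for every $s \in \mathopen{[}0,1\mathclose{]}$, since $\Omega_{\rho}$ is star-shaped with respect to the origin: $\|s\varphi\|\leq\|\varphi\|<\rho$, so $s\varphi \in \Omega_{\rho}$, and therefore
\begin{equation*}
\lvert y_{\sigma}(t) + s\varphi(t) \rvert \geq \frac{\omega}{2}\, t^{\frac{2}{\alpha+2}}, \qquad \text{for every $t \geq t_{0}$ and $s \in \mathopen{[}0,1\mathclose{]}$.}
\end{equation*}
Combined with \eqref{Ue3}, which yields $|\nabla^{2}U_{\varepsilon}(t,z)| \leq C''/|z|^{\alpha+2}$ whenever $z$ stays in the cone $\mathcal{T}(\xi^{+},R,\eta)$ (this is guaranteed by \eqref{y+phi++} together with condition \eqref{cond-t0}), we obtain the uniform pointwise bound
\begin{equation*}
\bigl{\lvert} \nabla^{2} U_{\varepsilon}\bigl{(}t,y_{\sigma}(t)+s\varphi(t)\bigr{)} \bigr{\rvert} \leq \frac{C''}{(\omega/2)^{\alpha+2}} \cdot \frac{1}{t^{2}}, \qquad \text{for every $t\geq t_{0}$ and $s\in\mathopen{[}0,1\mathclose{]}$.}
\end{equation*}

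Plugging this into the three identities above immediately yields \eqref{K1}, \eqref{K2}, \eqref{K3} with an appropriate constant $C_{K}$ depending only on $C''$, $\omega$, $\alpha$. I do not expect a genuine obstacle here; the only point requiring mild care is verifying that the intermediate point $y_{\sigma}(t)+s\varphi(t)$ still lies in the cone where \eqref{Ue3} is valid, which follows by invoking Lemma~\ref{stima} pointwise at $s\varphi$ rather than at $\varphi$, together with the monotonicity in $t$ that makes $\mathcal{T}(\xi^{+},R,\eta)$ absorb the trajectory once $t\geq t_{0}$.
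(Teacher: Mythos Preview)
Your proposal is correct and follows essentially the same approach as the paper: both arguments recognize $K_{\varepsilon,\sigma}$ as a Taylor remainder, compute $\nabla^{2}K_{\varepsilon,\sigma}(t,\varphi(t)) = \nabla^{2}U_{\varepsilon}(t,y_{\sigma}(t)+\varphi(t))$, and combine \eqref{Ue3} with the lower bound \eqref{y+phi} applied at $s\varphi$ (using that $\Omega_{\rho}$ is star-shaped) to get the $t^{-2}$ decay. The only cosmetic difference is that the paper first establishes \eqref{K3} and then deduces \eqref{K2} and \eqref{K1} by writing $\nabla K_{\varepsilon,\sigma}(t,\varphi(t)) = \int_{0}^{1}\nabla^{2}K_{\varepsilon,\sigma}(t,s\varphi(t))\varphi(t)\,\mathrm{d}s$ and iterating, whereas you write the Taylor integral representations for all three quantities up front.
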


\begin{proof}
We have
\begin{equation*}
\nabla K_{\varepsilon,\sigma}(t,\varphi(t)) = \nabla U_{\varepsilon} (t,y_{\sigma}(t)+\varphi(t)) - \nabla U_{\varepsilon}(t,y_{\sigma}(t))
\end{equation*}
and
\begin{equation*}
\nabla^{2} K_{\varepsilon,\sigma}(t,\varphi(t)) = \nabla^{2} U_{\varepsilon} (t,y_{\sigma}(t)+\varphi(t)).
\end{equation*}
for every $t\geq t_{0}$. Moreover, from \eqref{y+phi}, for every $\sigma\in B_{r}$ and $\varphi\in\Omega_{\rho}$ we immediately find that
\begin{equation}\label{stima-cubo}
\frac{1}{\lvert y_{\sigma}(t) + \varphi(t) \rvert^{\alpha+2}} \leq \dfrac{2^{\alpha+2}}{\omega^{\alpha+2}} \dfrac{1}{t^{2}}, \quad \text{for every $t \geq t_{0}$.}
\end{equation}
Combining \eqref{Ue3} with \eqref{stima-cubo}, \eqref{K3} plainly follows.
To prove \eqref{K2}, we notice that $\nabla K_{\varepsilon,\sigma}(t,0) \equiv 0$ so as to write
\begin{equation*}
\nabla K_{\varepsilon,\sigma}(t,\varphi(t)) = \int_{0}^{1} \frac{\mathrm{d}}{\mathrm{d}s} \nabla K_{\varepsilon,\sigma}(t,s\varphi(t))\,\mathrm{d}s
= \int_{0}^{1} \nabla^{2} K_{\varepsilon,\sigma}(t,s\varphi(t))\varphi(t)\,\mathrm{d}s,
\end{equation*}
for every $t \geq t_{0}$. Hence, \eqref{K2} directly follows from \eqref{K3} (with $s\varphi$ in place of $\varphi$). In an analogous way (using $K_{\varepsilon,\sigma}(t,0) \equiv 0$) we obtain \eqref{K1} from \eqref{K2}.
\end{proof}

\subsection{The implicit function argument}\label{section-5.2}

In this section, we provide the details of the implicit function argument. To this end, we start by recalling the definition of the action functional $\mathcal{A}_{\varepsilon,\sigma}$ given in \eqref{def-A}; notice that we now assume $\sigma \in B_{r}$ and 
$\varphi \in \Omega_{\rho}$ so that all the integrands are well-defined. 

\begin{proposition}\label{prop-3.1}
For every $\varepsilon\in\mathopen{[}0,1\mathclose{[}$ and $\sigma\in B_{r}$, the action functional $\mathcal{A}_{\varepsilon,\sigma}$ is of class $\mathcal{C}^{2}$ on the open set $\Omega_{\rho} \subseteq \mathcal{D}^{1,2}_{0}(t_{0},+\infty)$, with
\begin{equation*}
\mathrm{d} \mathcal{A}_{\varepsilon,\sigma}(\varphi)[\psi] =  \int_{t_{0}}^{+\infty} \Bigl{(} \bigl{\langle} \dot{\varphi}(t),\dot{\psi}(t) \bigr{\rangle} +  \bigl{\langle} \nabla K_{\varepsilon,\sigma}(t,\varphi(t)),\psi(t) \bigr{\rangle} + \bigl{\langle} h_{\varepsilon,\sigma}(t),\psi(t) \bigr{\rangle} \Bigr{)}\,\mathrm{d}t
\end{equation*}
and
\begin{equation*}
\mathrm{d}^{2} \mathcal{A}_{\varepsilon,\sigma}(\varphi)[\psi,\zeta]  
= \int_{t_{0}}^{+\infty} \Bigl{(} \bigl{\langle}  \dot{\psi}(t),\dot{\zeta}(t) \bigr{\rangle} +  \bigl{\langle} \nabla^{2} K_{\varepsilon,\sigma}(t,\varphi(t))\psi(t),\zeta(t) \bigr{\rangle} \Bigr{)}\,\mathrm{d}t,
\end{equation*}
for every $\psi, \zeta \in \mathcal{D}^{1,2}_{0}(t_{0},+\infty)$. Moreover, the three-variable function 
\begin{equation*}
F \colon \mathopen{[}0,1\mathclose{[} \times B_{r} \times \Omega_{\rho} \to \bigl{(}\mathcal{D}^{1,2}_{0}(t_{0},+\infty)\bigr{)}^{\!*}, \qquad
F(\varepsilon,\sigma,\varphi) = \mathrm{d}\mathcal{A}_{\varepsilon,\sigma}(\varphi),
\end{equation*}
is continuous and its differential $\mathrm{D}_{\varphi} F$ is continuous. 
\end{proposition}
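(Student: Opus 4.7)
The plan is to verify well-posedness of $\mathcal{A}_{\varepsilon,\sigma}$ on $\Omega_\rho$, compute its first two differentials, and then derive the joint continuity of $F$ and $\mathrm{D}_\varphi F$. The workhorse throughout is the systematic combination of the pointwise estimates in Lemmas \ref{lem-h}--\ref{lem-K} with the Hardy inequality of Proposition \ref{prop-Hardy}; the main technical obstacle is the last step, which must reduce the dual-norm continuity of $F$ to purely weighted-$L^2$ convergence statements amenable to dominated convergence.

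For well-definedness I would bound each of the three summands in $\mathcal{A}_{\varepsilon,\sigma}$. The kinetic integral is finite by the very definition of $\mathcal{D}^{1,2}_{0}(t_{0},+\infty)$. Estimate \eqref{K1} together with Hardy yields $\int_{t_{0}}^{+\infty} |K_{\varepsilon,\sigma}(t,\varphi(t))|\,\mathrm{d}t \leq 4C_{K}\|\varphi\|^{2}$. The forcing term is controlled by the weighted Cauchy--Schwarz estimate
\begin{equation*}
\int_{t_{0}}^{+\infty} |\langle h_{\varepsilon,\sigma},\varphi\rangle|\,\mathrm{d}t \leq \left(\int_{t_{0}}^{+\infty} |h_{\varepsilon,\sigma}(t)|^{2} t^{2}\,\mathrm{d}t\right)^{\!1/2}\left(\int_{t_{0}}^{+\infty} \frac{|\varphi(t)|^{2}}{t^{2}}\,\mathrm{d}t\right)^{\!1/2},
\end{equation*}
whose first factor is finite by \eqref{eq-h-C}, since the exponent condition $2 - 4(\beta+1)/(\alpha+2) < -1$ is nothing but the standing assumption $4\beta - 3\alpha > 2$ from Theorem \ref{th-main}.

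To identify the first differential I would compute the G\^ateaux derivative along any $\psi \in \mathcal{D}^{1,2}_{0}(t_{0},+\infty)$ by differentiating under the integral sign; the bound \eqref{K2} supplies the dominator $(C_{K}/t^{2})|\varphi(t)||\psi(t)|$, integrable via Cauchy--Schwarz and Hardy, which legitimizes the exchange. The upgrade to Fr\'echet differentiability (and hence the stated formula) reduces, via the mean-value identity
\begin{equation*}
\nabla K_{\varepsilon,\sigma}(t,\varphi_{1}) - \nabla K_{\varepsilon,\sigma}(t,\varphi_{2}) = \int_{0}^{1} \nabla^{2} K_{\varepsilon,\sigma}\bigl(t,\varphi_{2}+s(\varphi_{1}-\varphi_{2})\bigr)(\varphi_{1}-\varphi_{2})\,\mathrm{d}s
\end{equation*}
combined with \eqref{K3}, to the pointwise Lipschitz estimate $|\nabla K_{\varepsilon,\sigma}(t,\varphi_{1}) - \nabla K_{\varepsilon,\sigma}(t,\varphi_{2})| \leq (C_{K}/t^{2}) |\varphi_{1}(t)-\varphi_{2}(t)|$, which, once more through Cauchy--Schwarz and Hardy, delivers Lipschitz continuity of the differential in $\varphi$. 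The analogous calculation for $\mathrm{d}^{2}\mathcal{A}_{\varepsilon,\sigma}$ is dispatched by reading \eqref{K3} directly as the pointwise bound for $\nabla^{2} K_{\varepsilon,\sigma}$; continuity of the Hessian in $\varphi$ follows from the $\mathcal{C}^{3}$ regularity of $U$ together with dominated convergence, yielding $\mathcal{A}_{\varepsilon,\sigma} \in \mathcal{C}^{2}(\Omega_{\rho})$ with the announced formulas.

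For the joint continuity of $F$, the Cauchy--Schwarz and Hardy bound applied to an arbitrary test $\psi$ with $\|\psi\|\leq 1$ reduces the dual-norm convergence $F(\varepsilon_{n},\sigma_{n},\varphi_{n}) \to F(\varepsilon,\sigma,\varphi)$ to the convergence, in the weighted space $L^{2}(t_{0},+\infty; t^{2}\,\mathrm{d}t)$, of $\nabla K_{\varepsilon_{n},\sigma_{n}}(\cdot,\varphi_{n}) \to \nabla K_{\varepsilon,\sigma}(\cdot,\varphi)$ and of $h_{\varepsilon_{n},\sigma_{n}} \to h_{\varepsilon,\sigma}$. Splitting via the triangle inequality into a $\varphi$-variation at fixed parameters (controlled by the Lipschitz estimate just proved) and a parameter-variation at fixed $\varphi$, the latter is handled by dominated convergence: the integrands admit the uniform-in-parameter dominators $4C_{K}^{2}|\varphi(t)|^{2}/t^{2}$ and $4C_{h}^{2}\, t^{2-4(\beta+1)/(\alpha+2)}$ (integrable by Hardy and by $4\beta - 3\alpha > 2$, respectively), while pointwise convergence is ensured by \eqref{convergenza}, by the continuity of $y_{\sigma}$ in $\sigma$, and by the local uniform convergence $\varphi_{n}\to\varphi$ recorded after \eqref{Hardy-sqrt}. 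The argument for $\mathrm{D}_{\varphi}F$ is entirely parallel, with $\nabla^{2} K$ replacing $\nabla K$; the required Lipschitz dependence of $\nabla^{2} K_{\varepsilon,\sigma}(t,\cdot)$ on $\varphi$ is supplied by the $\mathcal{C}^{3}$ regularity of $U$, after which the same Cauchy--Schwarz, Hardy, and dominated-convergence scheme closes the proof.
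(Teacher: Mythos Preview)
Your overall strategy matches the paper's: exploit Lemmas~\ref{lem-h}--\ref{lem-K} and the Hardy inequality to control every term, compute G\^ateaux differentials under dominated-convergence justification, and upgrade to joint continuity in $(\varepsilon,\sigma,\varphi)$ by reducing the dual-norm estimates to weighted integral convergence. The well-posedness and first-differential arguments are essentially identical to the paper's.

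The gap is in your last paragraph. Saying that the continuity of $\mathrm{D}_{\varphi}F$ is ``entirely parallel'' to that of $F$, with the Lipschitz dependence of $\nabla^{2}K$ in $\varphi$ supplied by the $\mathcal{C}^{3}$ regularity of $U$, skips two real difficulties. First, estimate \eqref{K3} is weaker than \eqref{K2} by the missing factor $|\varphi(t)|$: whichever reduction you adopt for the bilinear form (the paper uses the pointwise bound \eqref{Hardy-sqrt} twice on $\psi,\zeta$, reducing to $\int_{t_{0}}^{+\infty} t\,|\nabla^{2}K_{n}-\nabla^{2}K|\,\mathrm{d}t$; your Cauchy--Schwarz/Hardy scheme would lead to an $L^{2}(t^{3}\,\mathrm{d}t)$ norm), the dominator coming from \eqref{K3} alone is of order $1/t$ and hence \emph{not} integrable. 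Second, $K_{\varepsilon,\sigma}$ involves $W_{\varepsilon}$ as well as $U$, and $W$ is only assumed $\mathcal{C}^{2}$, so no Lipschitz estimate on $\nabla^{2}W_{\varepsilon}$ is available. The paper resolves both issues by splitting $\nabla^{2}K$ into its $U$- and $W_{\varepsilon}$-contributions and treating them differently: for $U$, the mean-value theorem together with the $-(\alpha+3)$-homogeneity of the third derivatives yields a bound carrying the small factor $|\sigma_{n}-\sigma|+\|\varphi_{n}-\varphi\|$ and the faster decay $t^{-2(\alpha+3)/(\alpha+2)}$, which is integrable after the reduction since $\alpha<2$; for $W_{\varepsilon}$, one instead uses directly the sharper bound \eqref{We3}, whose decay $t^{-2(\beta+2)/(\alpha+2)}$ becomes integrable after the reduction precisely because $\beta>\alpha$, and then applies dominated convergence. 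Both halves are genuinely needed, and your sketch names only the first.
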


\begin{proof}
We split the action functional as
\begin{equation}\label{split}
\mathcal{A}_{\varepsilon,\sigma}(\varphi) = \mathcal{A}^{1}(\varphi) + \mathcal{A}^{2}_{\varepsilon,\sigma}(\varphi) + \mathcal{A}^{3}_{\varepsilon,\sigma}(\varphi),
\end{equation}
where $\mathcal{A}^{1}(\varphi) = \tfrac{1}{2} \Vert \varphi \Vert^{2}$,
\begin{equation*}
\mathcal{A}^{2}_{\varepsilon,\sigma}(\varphi) = \int_{t_{0}}^{+\infty} K_{\varepsilon,\sigma}(t,\varphi(t)) \,\mathrm{d}t, \quad 
\mathcal{A}^{3}_{\varepsilon,\sigma}(\varphi) = \int_{t_{0}}^{+\infty} \bigl{\langle} h_{\varepsilon,\sigma}(t), \varphi(t) \bigr{\rangle}  \,\mathrm{d}t,
\end{equation*}
and we investigate each term separately.

As for $\mathcal{A}^{1}$, it is well known that it is a smooth functional, with
\begin{equation*}
\mathrm{d} \mathcal{A}^{1}(\varphi)[\psi] = \int_{t_{0}}^{+\infty} \bigl{\langle} \dot{\varphi}(t),\dot{\psi}(t) \bigr{\rangle} \,\mathrm{d}t \quad \text{ and }  
\quad \mathrm{d}^{2} \mathcal{A}^{1}(\varphi)[\psi,\zeta] = \int_{t_{0}}^{+\infty} \bigl{\langle} \dot{\psi}(t),\dot{\zeta}(t) \bigr{\rangle} \,\mathrm{d}t.
\end{equation*}
The continuity with respect to the parameters $\varepsilon$ and $\sigma$ is obvious, since they do not appear in the above expressions.

We now focus our attention on the (linear) term $\mathcal{A}^{3}_{\varepsilon,\sigma}$. Using \eqref{Hardy-sqrt} and \eqref{eq-h-C}, we deduce
\begin{equation*}
\biggl{\lvert}\int_{t_{0}}^{+\infty}\bigl{\langle} h_{\varepsilon,\sigma}(t), \varphi(t) \bigr{\rangle} \,\mathrm{d}t \biggr{\rvert} \leq C_{h} \Vert \varphi \Vert \int_{t_{0}}^{+\infty} t^{-\frac{4\beta-\alpha+2}{2(\alpha+2)}} \,\mathrm{d}t < +\infty,
\end{equation*}
since $4\beta-3\alpha>2$, so that $\mathcal{A}^{3}_{\varepsilon,\sigma}$ is well-defined and continuous. Therefore, 
\begin{equation*}
\mathrm{d} \mathcal{A}^{3}_{\varepsilon,\sigma}(\varphi)[\psi] = \int_{t_{0}}^{+\infty} \bigl{\langle} h_{\varepsilon,\sigma}(t), \psi(t) \bigr{\rangle} \,\mathrm{d}t
\quad \text{ and } \quad \mathrm{d}^{2} \mathcal{A}^{3}_{\varepsilon,\sigma}(\varphi)[\psi,\zeta] = 0.
\end{equation*}
The only thing to check is the continuity of the map $(\varepsilon,\sigma,\varphi) \mapsto \mathrm{d} \mathcal{A}^{3}_{\varepsilon,\sigma}(\varphi)$. Precisely, since $\mathrm{d} \mathcal{A}^{3}_{\varepsilon,\sigma}(\varphi)$ does not depend on $\varphi$, we need to verify that
\begin{equation}\label{h-cont0}
(\varepsilon_{n},\sigma_{n}) \to (\varepsilon,\sigma) \quad \Rightarrow \quad \sup_{\Vert \psi \Vert \leq 1} 
\biggl{\lvert} \int_{t_{0}}^{+\infty} \bigl{\langle} h_{\varepsilon_{n},\sigma_{n}}(t) - h_{\varepsilon,\sigma}(t), \psi(t) \bigr{\rangle} \,\mathrm{d}t \biggr{\rvert} \to 0.
\end{equation}
To prove this, we first use the Cauchy--Schwartz inequality together with \eqref{Hardy} to get 
\begin{equation}\label{h-cont}
\begin{aligned}
&\sup_{\Vert \psi \Vert \leq 1} 
\biggl{\lvert} \int_{t_{0}}^{+\infty} \bigl{\langle} h_{\varepsilon_{n},\sigma_{n}}(t) - h_{\varepsilon,\sigma}(t), \psi(t) \bigr{\rangle} \,\mathrm{d}t \biggr{\rvert} \leq
\\& \leq \sup_{\Vert \psi \Vert \leq 1} \int_{t_{0}}^{+\infty} t \lvert h_{\varepsilon_{n},\sigma_{n}}(t) - h_{\varepsilon,\sigma}(t) \rvert
 \dfrac{\lvert \psi(t) \rvert}{t} \,\mathrm{d}t 
\\& \leq \sup_{\Vert \psi \Vert \leq 1} 
\biggl{(} \int_{t_{0}}^{+\infty} \dfrac{\lvert \psi(t) \rvert^{2}}{t^{2}}\,\mathrm{d}t \biggr{)}^{\!\frac{1}{2}}  
\biggl{(} \int_{t_{0}}^{+\infty} t^{2} \lvert h_{\varepsilon_{n},\sigma_{n}}(t) - h_{\varepsilon,\sigma}(t)\rvert^{2}\,\mathrm{d}t \biggr{)}^{\!\frac{1}{2}}  
\\&\leq 2 \, \biggl{(} \int_{t_{0}}^{+\infty} t^{2} \lvert h_{\varepsilon_{n},\sigma_{n}}(t) - h_{\varepsilon,\sigma}(t)\rvert^{2}\,\mathrm{d}t \biggr{)}^{\!\frac{1}{2}}.
\end{aligned}
\end{equation}
We then conclude, using the dominated convergence theorem: indeed, it is easily checked that 
$h_{\varepsilon_{n},\sigma_{n}}(t) \to h_{\varepsilon,\sigma}(t)$ pointwise
and the integrand is bounded by the integrable function $4 \, C_{h}^{2} \, t^{-\frac{2(2\beta-\alpha)}{\alpha+2}}$ (recall that $4\beta-3\alpha>2$), by \eqref{eq-h-C}.

We finally deal with the (nonlinear) term $\mathcal{A}^{2}_{\varepsilon,\sigma}$. Incidentally, observe that the integral is well-defined, as it can be seen by combining \eqref{K1} with \eqref{Hardy}.

Let us consider the linear form 
\begin{equation}\label{da2}
L_{\varepsilon,\sigma,\varphi}[\psi] = \int_{t_{0}}^{+\infty} \bigl{\langle} \nabla K_{\varepsilon,\sigma}(t,\varphi(t)),\psi(t) \bigr{\rangle} \,\mathrm{d}t, \qquad \psi \in \mathcal{D}^{1,2}_{0}(t_{0},+\infty),
\end{equation}
and the bilinear form 
\begin{equation}\label{d2a2}
B_{\varepsilon,\sigma,\varphi}[\psi,\zeta] = \int_{t_{0}}^{+\infty} \bigl{\langle} \nabla^{2} K_{\varepsilon,\sigma}(t,\varphi(t))\psi(t),\zeta(t) \bigr{\rangle} \,\mathrm{d}t, \qquad \psi,\zeta \in \mathcal{D}^{1,2}_{0}(t_{0},+\infty).
\end{equation}
Notice that $L_{\varepsilon,\sigma,\varphi}$ and $B_{\varepsilon,\sigma,\varphi}$ are actually well-defined and continuous: indeed, using \eqref{K2} together with the Cauchy--Schwartz inequality and \eqref{Hardy}, we obtain
\begin{align*}
\lvert L_{\varepsilon,\sigma,\varphi}[\psi] \rvert & \leq C_{K} \int_{t_{0}}^{+\infty} \dfrac{\lvert \varphi(t) \rvert \lvert \psi(t) \rvert}{t^{2}}\,\mathrm{d}t \\
& \leq 
C_{K} \biggl{(} \int_{t_{0}}^{+\infty} \frac{\lvert \varphi(t) \rvert^{2}}{t^{2}}\,\mathrm{d}t\biggr{)}^{\!\frac{1}{2}} \biggl{(} \int_{t_{0}}^{+\infty} \frac{\lvert \psi(t) \rvert^{2}}{t^{2}}\,\mathrm{d}t\biggr{)}^{\!\frac{1}{2}}  \leq 4 C_{K} \Vert \varphi \Vert \Vert \psi \Vert.
\end{align*}
An analogous argument works for $B_{\varepsilon,\sigma,\varphi}$ (using \eqref{K3}).

We now prove that the functions $(\varepsilon,\sigma,\varphi) \mapsto L_{\varepsilon,\sigma,\varphi}$ and 
$(\varepsilon,\sigma,\varphi) \mapsto B_{\varepsilon,\sigma,\varphi}$
are continuous as functions with values in the space of linear and bilinear forms on $\mathcal{D}^{1,2}_{0}(t_{0},+\infty)$, respectively, that is,
\begin{equation}\label{L-cont}
(\varepsilon_{n},\sigma_{n},\varphi_{n}) \to (\varepsilon,\sigma,\varphi) \quad \Rightarrow \quad \sup_{\Vert \psi \Vert \leq 1} \lvert 
L_{\varepsilon_{n},\sigma_{n},\varphi_{n}}[\psi] - L_{\varepsilon,\sigma,\varphi}[\psi] \rvert \to 0 
\end{equation}
and
\begin{equation}\label{B-cont}
(\varepsilon_{n},\sigma_{n},\varphi_{n}) \to (\varepsilon,\sigma,\varphi) \quad \Rightarrow \quad 
\sup_{\substack{\Vert \psi \Vert \leq 1\\  \Vert \zeta \Vert \leq 1}} \lvert B_{\varepsilon_{n},\sigma_{n},\varphi_{n}}[\psi,\zeta] - 
B_{\varepsilon,\sigma,\varphi}[\psi,\zeta] \rvert \to 0. 
\end{equation}

We start with \eqref{L-cont}. Arguing as in \eqref{h-cont}, we find
\begin{align*}
& \sup_{\Vert \psi \Vert \leq 1} \lvert
L_{\varepsilon_{n},\sigma_{n},\varphi_{n}}[\psi] - L_{\varepsilon,\sigma,\varphi}[\psi] \rvert \leq \\
& \leq 2 \, \biggl{(} \int_{t_{0}}^{+\infty} t^{2} \lvert \nabla K_{\varepsilon_{n},\sigma_{n}}(t,\varphi_{n}(t)) - \nabla K_{\varepsilon,\sigma}(t,\varphi(t))\rvert^{2}\,\mathrm{d}t \biggr{)}^{\!\frac{1}{2}}.
\end{align*}
We are going to show that the integral goes to zero, using the dominated convergence theorem. To this end, we first observe that the integrand goes to zero pointwise, since $\varphi_{n} \to \varphi$ in $\mathcal{D}^{1,2}_{0}(t_{0},+\infty)$ implies uniform convergence on compact sets (recall the inequality \eqref{Hardy-sqrt}). To prove that the integrand is $L^{1}$-bounded, we use \eqref{K2} and elementary inequalities so as to obtain
\begin{align*}
& t^{2} \lvert \nabla K_{\varepsilon_{n},\sigma_{n}}(t,\varphi_{n}(t)) - \nabla K_{\varepsilon,\sigma}(t,\varphi(t))\rvert^{2} \leq 2 C_{K} 
\biggl{(} \frac{\lvert\varphi_{n}(t)\rvert^{2}}{t^{2}} + \frac{\lvert\varphi(t)\rvert^{2}}{t^{2}} \biggr{)} \\
& \leq 4 C_{K} 
\biggl{(} \frac{\lvert\varphi(t)\rvert^{2}}{t^{2}} + \frac{\lvert\varphi_{n}(t)-\varphi(t)\rvert^{2}}{t^{2}} \biggr{)},
\end{align*}
for every $t \geq t_{0}$. By Hardy inequality \eqref{Hardy}, the first term on the right-hand side is in $L^{1}$; on the other hand, again by Hardy inequality, the second term goes to zero in $L^{1}$ and thus, up to a subsequence, is $L^{1}$-dominated. Hence, the dominated convergence theorem applies along a subsequence
and a standard argument yields the conclusion for the original sequence. 

We now prove \eqref{B-cont}; this will require a more careful analysis. Recalling the definition of $K_{\varepsilon,\sigma}$, we are going to show that
\begin{equation}\label{primopezzo}
\sup_{\substack{\Vert \psi \Vert \leq 1\\  \Vert \zeta \Vert \leq 1}} \biggl{\lvert} \int_{t_{0}}^{+\infty} \bigl{\langle}  \bigl{(}\nabla^{2} U(y_{\sigma_{n}}(t)+\varphi_{n}(t)) - \nabla^{2} U(y_{\sigma}(t)+\varphi(t))\bigr{)}\psi(t),\zeta(t)\bigr{\rangle} \,\mathrm{d}t \biggr{\rvert}\to 0
\end{equation}
and that
\begin{equation}\label{secondopezzo}
\sup_{\substack{\Vert \psi \Vert \leq 1\\  \Vert \zeta \Vert \leq 1}}  \biggl{\lvert} \int_{t_{0}}^{+\infty} \bigl{\langle}
\bigl{(}\nabla^{2} W_{\varepsilon_{n}}(t,y_{\sigma_{n}}(t)+\varphi_{n}(t)) - \nabla^{2} W_{\varepsilon}(t,y_{\sigma}(t)+\varphi(t))\bigr{)}\psi(t),\zeta(t)\bigr{\rangle} \,\mathrm{d}t \biggr{\rvert} \to 0,
\end{equation}
as $(\varepsilon_n,\sigma_n,\varphi_n) \to (\varepsilon,\sigma,\varphi)$.

We first deal with \eqref{primopezzo}. Preliminarily, we observe that, denoting by $z_{n,\lambda}(t)$ a generic point along the segment joining $y_{\sigma}(t) + \varphi(t)$ with $y_{\sigma_{n}}(t) + \varphi_{n}(t)$, that is, for $\lambda \in \mathopen{[}0,1\mathclose{]}$ and $t \geq t_{0}$,
\begin{equation*}
z_{n,\lambda}(t) = y_{\sigma}(t) + \varphi(t) + \lambda \bigl{(} y_{\sigma_{n}}(t) - y_{\sigma}(t) + \varphi_{n}(t) - \varphi(t) \bigr{)},
\end{equation*}
the estimate
\begin{equation}\label{cruciale}
\lvert z_{n,\lambda}(t) \rvert \geq \dfrac{\omega}{4} \, t^{\frac{2}{\alpha+2}}, \quad \text{for every $t \geq t_{0}$},
\end{equation}
holds true, when $n$ is large enough. Indeed, using \eqref{y+phi} and \eqref{Hardy-sqrt} (for $\varphi_{n} - \varphi$), we obtain
\begin{align*}
\lvert z_{n,\lambda}(t) \rvert & \geq \lvert y_{\sigma}(t) + \varphi(t) \rvert - \lvert y_{\sigma_{n}}(t) - y_{\sigma}(t) \rvert - \lvert   \varphi_{n}(t) - \varphi(t)\rvert \\
& \geq t^{\frac{2}{\alpha+2}} \biggl{(} \frac{\omega}{2} - \lvert \sigma_{n} - \sigma \rvert \max_{t \in \mathopen{[}1,2\mathclose{]}} \lvert w(t) \rvert t^{-\frac{2}{\alpha+2}} - \Vert \varphi_{n} - \varphi \Vert t^{-\frac{\alpha-2}{2(\alpha+2)}} \biggr{)} \\
& \geq t^{\frac{2}{\alpha+2}} \biggl{(} \frac{\omega}{2} - \lvert \sigma_{n} - \sigma \rvert \max_{t \in \mathopen{[}1,2\mathclose{]}} \lvert w(t) \rvert - \Vert \varphi_{n} - \varphi \Vert \biggr{)},
\end{align*}
for every $t \geq t_{0}$, whence the conclusion when $n$ is large enough. In particular, we have that $z_{n,\lambda}(t) \in \mathcal{T}(\xi^+,R/2,\eta) \subseteq \Sigma$ for every $t \geq t_{0}$, when $n$ is large enough. Therefore, we can use the mean value theorem to obtain
\begin{align*}
& \lvert \nabla^{2} U(y_{\sigma_{n}}(t)+\varphi_{n}(t)) - \nabla^{2} U(y_{\sigma}(t)+\varphi(t)) \rvert \leq \\
& \leq \tilde{C} \sup_{\substack{\lambda \in \mathopen{[}0,1\mathclose{]} \\ i,j,k \in \{1,\ldots,d\}}}\lvert \partial^{3}_{ijk} U(z_{n,\lambda}(t)) \rvert \bigl{(}\lvert y_{\sigma_{n}}(t) - y_{\sigma}(t) \rvert + \lvert   \varphi_{n}(t) - \varphi(t))\rvert \bigr{)},
\end{align*}
for every $t \geq t_{0}$, where $\tilde{C}>0$ is a suitable constant. Using the fact that
\begin{equation*}
\partial^{3}_{ijk} U(y) = \mathcal{O} (\lvert y \rvert^{-\alpha-3}), \quad \text{as $\lvert y \rvert \to +\infty$, $y \in \mathcal{T}(\xi^+,R/2,\eta)$}
\end{equation*}
for every $i,j,k \in \{1,\ldots,d\}$, due to the homogeneity of $U$ (compare with \eqref{cond-U0}, \eqref{cond-U1}, \eqref{cond-U2}), together with \eqref{cruciale} and \eqref{Hardy-sqrt}, we thus 
deduce the existence of $\hat{C}>0$ such that
\begin{align}\label{meanvalue}
& \lvert \nabla^{2} U(y_{\sigma_{n}}(t)+\varphi_{n}(t)) - \nabla^{2} U(y_{\sigma}(t)+\varphi(t)) \rvert \leq \\
& \leq \hat{C} t^{-\frac{2(\alpha+3)}{\alpha+2}} \biggl{(}\lvert \sigma_{n} - \sigma \rvert \max_{t \in \mathopen{[}1,2\mathclose{]}} \lvert w(t) \rvert + \Vert \varphi_{n} - \varphi \Vert t^{\frac{1}{2}} \biggr{)},
\end{align}
for every $t \geq t_{0}$. Hence, using twice \eqref{Hardy-sqrt} (for $\psi$ and $\zeta$) we obtain
\begin{align*}
& \sup_{\substack{\Vert \psi \Vert \leq 1\\  \Vert \zeta \Vert \leq 1}} \biggl{\lvert}\int_{t_{0}}^{+\infty} \bigl{\langle}  \bigl{(} \nabla^{2} U(y_{\sigma_{n}}(t)+\varphi_{n}(t)) - \nabla^{2} U(y_{\sigma}(t)+\varphi(t))\bigr{)}\psi(t),\zeta(t)\bigr{\rangle} \,\mathrm{d}t \biggr{\rvert} \leq \\
& \leq \hat{C} \int_{t_{0}}^{+\infty} t^{1-\frac{2(\alpha+3)}{\alpha+2}} \biggl{(} \lvert \sigma_{n} - \sigma \rvert \max_{t \in \mathopen{[}1,2\mathclose{]}} \lvert w(t) \rvert + \Vert \varphi_{n} - \varphi \Vert t^{\frac{1}{2}} \biggr{)} \,\mathrm{d}t \\
& = \hat{C} \biggl{(}\lvert \sigma_{n} - \sigma \rvert \max_{t \in \mathopen{[}1,2\mathclose{]}} \lvert w(t) \rvert \int_{t_{0}}^{+\infty}  t^{-\frac{\alpha+4}{\alpha+2}} \, \mathrm{d}t + \Vert \varphi_{n} - \varphi \Vert \int_{t_{0}}^{+\infty} t^{-\frac{\alpha+6}{2(\alpha+2)}} \,\mathrm{d}t \biggr{)}, 
\end{align*}
which goes to zero as $n \to \infty$ (since $\alpha<2$).

As for \eqref{secondopezzo}, we use again twice \eqref{Hardy-sqrt} (for $\psi$ and $\zeta$) to obtain
\begin{align*}
&\sup_{\substack{\Vert \psi \Vert \leq 1\\  \Vert \zeta \Vert \leq 1}} \biggl{\lvert} \int_{t_{0}}^{+\infty} \bigl{\langle}
\bigl{(} \nabla^{2} W_{\varepsilon_{n}}(t,y_{\sigma_{n}}(t)+\varphi_{n}(t)) - \nabla^{2} W_{\varepsilon}(t,y_{\sigma}(t)+\varphi(t))\bigr{)}\psi(t),\zeta(t)\bigr{\rangle} \,\mathrm{d}t \biggr{\rvert} \\
& \leq \int_{t_{0}}^{+\infty} t \lvert \nabla^{2} W_{\varepsilon_{n}}(t,y_{\sigma_{n}}(t)+\varphi_{n}(t)) - \nabla^{2} W_{\varepsilon}(t,y_{\sigma}(t)+\varphi(t)) \rvert \,\mathrm{d}t.
\end{align*}
We are going to show that the integral goes to zero, using the dominated convergence theorem. The pointwise convergence of the integrand follows from $\varphi_{n} \to \varphi$ in $\mathcal{D}^{1,2}_{0}(t_{0},+\infty)$; on the other hand, the $L^{1}$-bound follows from \eqref{We3} together with the estimate \eqref{y+phi}.

We finally claim that the linear form
$L_{\varepsilon,\sigma,\varphi}$ defined in \eqref{da2} and the bilinear form
$B_{\varepsilon,\sigma,\varphi}$ defined in \eqref{d2a2} are, respectively, the first and second Gateaux differential of $\mathcal{A}_{\varepsilon,\sigma}^{2}$ at the point $\varphi$, that is, for every $\psi \in \mathcal{D}^{1,2}_{0}(t_{0},+\infty)$,
\begin{equation}\label{gat-1}
\lim_{\vartheta \to 0} \int_{t_{0}}^{+\infty} \Biggl{(} \frac{K_{\varepsilon,\sigma}(t,\varphi(t) + \vartheta \psi(t)) - K_{\varepsilon,\sigma}(t,\varphi(t))}{\vartheta} - 
\bigl{\langle} \nabla K_{\varepsilon,\sigma}(t,\varphi(t)),\psi(t) \bigr{\rangle} \Biggr{)} \,\mathrm{d}t = 0
\end{equation}
and
\begin{align}\label{gat-2}
& \lim_{\vartheta \to 0}\sup_{\Vert \zeta \Vert \leq 1} \Biggl{\lvert}\int_{t_{0}}^{+\infty} \Biggl{(} \Biggl{\langle}\frac{\nabla K_{\varepsilon,\sigma}(t,\varphi(t) + \vartheta \psi(t)) - \nabla K_{\varepsilon,\sigma}(t,\varphi(t))}{\vartheta}, \zeta(t) \Biggr{\rangle} \\
& \hspace{165pt} - \langle \nabla^{2} K_{\varepsilon,\sigma}(t,\varphi(t))\psi(t),\zeta(t) \rangle \Biggr{)} \,\mathrm{d}t \Biggr{\rvert} = 0.
\end{align}

We begin by verifying \eqref{gat-1}. Defining, for $\vartheta \in [-1,1]$ and $t \geq t_{0}$,
\begin{equation*}
g_{\vartheta}(t) = \int_{0}^{1} \bigl{(} \nabla K_{\varepsilon,\sigma}(t,\varphi(t) + s \vartheta \psi(t)) - \nabla K_{\varepsilon,\sigma}(t,\varphi(t) ) \bigr{)}\,\mathrm{d}s,
\end{equation*}
it can be readily checked that \eqref{gat-1} equivalently reads as
\begin{equation}\label{gat-1bis}
\lim_{\vartheta \to 0} \int_{t_{0}}^{+\infty} \bigl{\langle} g_{\vartheta}(t), \psi(t) \bigr{\rangle} \,\mathrm{d}t = 0.
\end{equation}
It is easy to see that $g_{\vartheta}(t) \to 0$ as $\vartheta\to0$  for every $t \geq t_{0}$. On the other hand, using \eqref{K2} we find
\begin{equation*}
\lvert g_{\vartheta}(t) \rvert \leq \dfrac{C_{K}}{t^{2}} \int_{0}^{1} \bigl{(} \lvert \varphi(t) + s \vartheta \psi(t) \rvert + \lvert \varphi(t) \rvert \bigr{)} \,\mathrm{d}s
\leq \dfrac{C_{K}}{t^{2}} \bigl{(} 2 \lvert \varphi(t) \rvert + \lvert \psi(t) \rvert \bigr{)},
\end{equation*}
implying
\begin{equation*}
\bigl{\lvert} \bigl{\langle} g_{\vartheta}(t),\psi(t) \bigr{\rangle}\bigr{\rvert} \leq C_{K} \biggl{(} \dfrac{2\lvert \varphi(t) \rvert \lvert \psi(t) \rvert}{t^{2}} + \dfrac{\lvert\psi(t)\rvert^{2}}{t^{2}}\biggr{)},
\end{equation*}
for every $t \geq t_{0}$. By Hardy inequality \eqref{Hardy}, the right-hand side is an $L^{1}$-function; therefore the dominated convergence theorem applies yielding \eqref{gat-1bis}.

We now focus on \eqref{gat-2}. Similarly as before, we are led to verify that
\begin{equation*}
\lim_{\vartheta \to 0} \sup_{\Vert \zeta \Vert \leq 1} \int_{t_{0}}^{+\infty} \bigl{\langle} G_{\vartheta}(t) \psi(t),\zeta(t) \bigr{\rangle} \,\mathrm{d}t = 0,
\end{equation*}
where, for $\vartheta \in \mathopen{[}-1,1\mathclose{]}$ and $t \geq t_{0}$,
\begin{equation*}
G_{\vartheta}(t) = \int_{0}^{1} \Bigl{(} \nabla^{2} K_{\varepsilon,\sigma}(t,\varphi(t) + s \vartheta \psi(t)) - \nabla^{2} K_{\varepsilon,\sigma}(t,\varphi(t) ) \Bigr{)}\,\mathrm{d}s.
\end{equation*}
Again, $G_{\vartheta}(t) \to 0$ as $\vartheta\to0$ for every $t \geq t_{0}$.
Using twice \eqref{Hardy-sqrt} (for $\psi$ and $\zeta$), we find
\begin{equation*}
\sup_{\Vert \zeta \Vert \leq 1} \int_{t_{0}}^{+\infty} \bigl{\langle} G_{\vartheta}(t) \psi(t),\zeta(t) \bigr{\rangle} \,\mathrm{d}t \leq \Vert \psi \Vert \int_{t_{0}}^{+\infty} t \lvert G_{\vartheta}(t) \rvert \,\mathrm{d}t
\end{equation*}
and we can thus conclude by showing that the function $t \lvert G_{\vartheta}(t) \rvert$ is $L^{1}$-bounded. To this end, \eqref{K3} is not enough and we have to use the same strategy as for the proof of \eqref{d2a2}. Precisely, we first write
\begin{align*}
G_{\vartheta}(t) & = \int_{0}^{1} \Bigl{(} \nabla^{2} U(y_{\sigma}(t)+\varphi(t) + s \vartheta \psi(t)) - \nabla^{2} U(y_{\sigma}(t)+\varphi(t) ) \Bigr{)}\,\mathrm{d}s \\
& \quad + \int_{0}^{1} \Bigl{(} \nabla^{2} W_{\varepsilon}(t,y_{\sigma}(t)+\varphi(t) + s \vartheta \psi(t)) - \nabla^{2} W_{\varepsilon}(t,y_{\sigma}(t)+\varphi(t) ) \Bigr{)}\,\mathrm{d}s.
\end{align*}
Arguing exactly as in the proof of \eqref{meanvalue} (with $\sigma_{n} = \sigma$ and $\varphi + s\vartheta \psi$ in place of $\varphi_{n}$), 
we find on one hand that, when $\lvert \vartheta \rvert$ is small enough,
\begin{equation*}
\biggl{\lvert} \int_{0}^{1} \Bigl{(} \nabla^{2} U(y_{\sigma}(t)+\varphi(t) + s \vartheta \psi(t)) - \nabla^{2} U(y_{\sigma}(t)+\varphi(t) ) \Bigr{)}\,\mathrm{d}s \biggr{\rvert} \leq \tilde{C} t^{-\frac{2(\alpha+3)}{\alpha+2}} \Vert \psi \Vert t^{\frac{1}{2}},
\end{equation*}
for every $t \geq t_{0}$. On the other hand, using twice \eqref{y+phi} (the first time with $\varphi + s \vartheta \psi$ in place of $\varphi$) together with \eqref{We3} we find a constant $\check{C} > 0$ such that, when $\lvert \vartheta \rvert$ is small enough,
\begin{equation*}
\biggl{\lvert} \int_{0}^{1} \Bigl{(} \nabla^{2} W_{\varepsilon}(t,y_{\sigma}(t)+\varphi(t) + s \vartheta \psi(t)) - \nabla^{2} W_{\varepsilon}(t,y_{\sigma}(t)+\varphi(t) ) \Bigr{)}\,\mathrm{d}s \biggr{\rvert} \leq \check{C} t^{-\frac{2(\beta+2)}{\alpha+2}},
\end{equation*}
for every $t \geq t_{0}$. Summing up, for $\lvert \vartheta \rvert$ small enough, we find
\begin{equation*}
t \lvert G_{\vartheta}(t) \rvert \leq \tilde{C} \Vert \psi \Vert t^{-\frac{\alpha+6}{2(\alpha+2)}} + \check{C} t^{-\frac{2\beta-\alpha+2}{\alpha+2}}, \quad \text{for every $t \geq t_{0}$},
\end{equation*}
proving the desired $L^{1}$-bound (since $\alpha<2$ and $\alpha<\beta$).

We are finally ready to summarize and conclude. The existence of the limit in \eqref{gat-1} implies that the linear form
$L_{\varepsilon,\sigma,\varphi}$ defined in \eqref{da2} is the Gateaux differential of $\mathcal{A}_{\varepsilon,\sigma}^{2}$ at the point $\varphi$.
Since such a form is continuous (in $\varphi$), as proved in \eqref{L-cont}, we infer that $L_{\varepsilon,\sigma,\varphi}$ is the (Fr\'{e}chet) differential of $\mathcal{A}_{\varepsilon,\sigma}^{2}$ at the point $\varphi$ (and, moreover, $\mathcal{A}_{\varepsilon,\sigma}^{2}$ is of class $\mathcal{C}^{1}$ on the open set $\Omega_{\rho}$).

Similarly, the existence of the limit in \eqref{gat-2} implies that the bilinear form
$B_{\varepsilon,\sigma,\varphi}$ defined in \eqref{d2a2} is the Gateaux differential of $\mathrm{d} \mathcal{A}_{\varepsilon,\sigma}^{2}$ at the point $\varphi$.
Since such a form is continuous (in $\varphi$) as proved in \eqref{B-cont}, we infer that $B_{\varepsilon,\sigma,\varphi}$ is the (Fr\'{e}chet) differential of $\mathrm{d}\mathcal{A}_{\varepsilon,\sigma}^{2}$ at the point $\varphi$, that is, the second differential of $\mathcal{A}_{\varepsilon,\sigma}^{2}$ at the point $\varphi$. Hence, the functional $\mathcal{A}_{\varepsilon,\sigma}^{2}$ is of class $\mathcal{C}^{2}$ on the open set $\Omega_{\rho}$.

Recalling \eqref{split} and the discussion about $\mathcal{A}^{1}$ and $\mathcal{A}^{3}_{\varepsilon,\sigma}$ at the beginning of the proof, we 
conclude that the functional $\mathcal{A}_{\varepsilon,\sigma}$ is of class $\mathcal{C}^{2}$ on the open set $\Omega_{\rho}$. 

All this implies that $F$ is differentiable in $\varphi$, with differential 
\begin{equation*}
\mathrm{D}_{\varphi} F(\varepsilon,\sigma,\varphi) = \mathrm{d}^{2} \mathcal{A}_{\varepsilon,\sigma}(\varphi) = \mathrm{d}^{2} \mathcal{A}^{1}(\varphi) + \mathrm{d}^{2} \mathcal{A}_{\varepsilon,\sigma}^{2}(\varphi).
\end{equation*}
The continuity of $F$ and $\mathrm{D}_{\varphi} F$ with respect to the three variables $(\varepsilon,\sigma,\varphi)$ thus follows from 
\eqref{h-cont0}, \eqref{L-cont} and \eqref{B-cont}.
\end{proof}

Our goal now is to apply the implicit function theorem to the function $F$. To this end, we first observe that
\begin{equation*}
h_{0,0}(t) \equiv 0, \qquad \nabla K_{0,0}(t,0) \equiv 0,
\end{equation*}
implying $F(0,0,0) = 0$. On the other hand,
\begin{equation*}
\nabla^{2} K_{0,0}(t,0) = \nabla^{2} U(y_{0}(t)), \quad \text{for every $t \geq t_{0}$},
\end{equation*}
so that
\begin{equation}\label{diff-for}
\mathrm{D}_{\varphi} F(0,0,0)[\psi,\zeta] = \int_{t_{0}}^{+\infty} \Bigl{(} \bigl{\langle}  \dot{\psi}(t),\dot{\zeta}(t) \bigr{\rangle} +  \bigl{\langle} \nabla^{2} U(y_{0}(t))\psi(t),\zeta(t) \bigr{\rangle} \Bigr{)}\,\mathrm{d}t,
\end{equation}
for every $\psi,\zeta \in \mathcal{D}^{1,2}_{0}(t_{0},+\infty)$. 
In the above formula, we have meant the differential $\mathrm{D}_{\varphi} F(0,0,0)$ as a continuous bilinear form on $\mathcal{D}^{1,2}_{0}(t_{0},+\infty)$, using the canonical isomorphism between bilinear forms on a Banach space $X$ and linear operators from $X$ to $X^{*}$.
Notice that the invertibility of $\mathrm{D}_{\varphi} F(0,0,0)$ (as a linear operator from $\mathcal{D}^{1,2}_{0}(t_{0},+\infty)$ to its dual) is equivalent 
to the fact that for every $T \in \bigl{(}\mathcal{D}^{1,2}_{0}(t_{0},+\infty)\bigr{)}^{\!*}$ there exists $\psi_T \in \mathcal{D}^{1,2}_{0}(t_{0},+\infty)$ such that
\begin{equation*}
\mathrm{D}_{\varphi} F(0,0,0)[\psi_T,\zeta] = T[\zeta], \quad \text{for every $\zeta \in \mathcal{D}^{1,2}_{0}(t_{0},+\infty)$}.
\end{equation*}
To show that this is true, we are going to use the Lax--Milgram theorem, by proving that the quadratic form associated to $\mathrm{D}_{\varphi} F(0,0,0)$ is coercive. This is the content of the next proposition.

\begin{proposition}\label{prop-3.2}
There exists $\kappa>0$ such that for every $\psi \in \mathcal{D}^{1,2}_{0}(t_{0},+\infty)$ it holds that
\begin{equation*}
\int_{t_{0}}^{+\infty} \Bigl{(} \lvert \dot{\psi}(t) \rvert^{2} +  \bigl{\langle} \nabla^{2} U(y_{0}(t))\psi(t),\psi(t) \bigr{\rangle} \Bigr{)}\,\mathrm{d}t \geq \kappa \Vert \psi \Vert^{2}.
\end{equation*}
\end{proposition}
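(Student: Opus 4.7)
The strategy is to exploit the homogeneity of $\nabla^{2}U$ to reduce the quadratic form to a one-dimensional Hardy-type inequality and then to use the $(\textsc{BS})$-condition to secure a positive gap below the critical Hardy constant. Since $\nabla^{2}U$ is positively homogeneous of degree $-(\alpha+2)$ and $y_{0}(t) = \omega t^{\frac{2}{\alpha+2}}\xi^{+}$ with $\omega^{-\alpha-2} = \frac{2}{(\alpha+2)^{2}U(\xi^{+})}$, one has
\begin{equation*}
\nabla^{2}U(y_{0}(t)) = \omega^{-\alpha-2}\,t^{-2}\,\nabla^{2}U(\xi^{+}),
\end{equation*}
so the form in question becomes $\int_{t_0}^{+\infty}\lvert\dot\psi\rvert^{2}\,\mathrm{d}t + \omega^{-\alpha-2}\int_{t_{0}}^{+\infty}\langle \nabla^{2}U(\xi^{+})\psi,\psi\rangle/t^{2}\,\mathrm{d}t$, placing us precisely in the regime of Proposition~\ref{prop-Hardy}.

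I would then decompose $\psi(t) = a(t)\xi^{+} + \psi^{\perp}(t)$ with $\langle \psi^{\perp}(t),\xi^{+}\rangle = 0$ pointwise. This splitting is orthogonal in $\mathcal{D}^{1,2}_{0}(t_{0},+\infty)$, so $\lVert\psi\rVert^{2} = \lVert a\xi^{+}\rVert^{2} + \lVert\psi^{\perp}\rVert^{2}$, and moreover $\nabla^{2}U(\xi^{+})$ respects it thanks to \eqref{eq-diffseccc}, which gives $\langle \nabla^{2}U(\xi^{+})\xi^{+},\xi^{+}\rangle = \alpha(\alpha+1)U(\xi^{+})$. The radial contribution is therefore
\begin{equation*}
\int_{t_{0}}^{+\infty}\lvert\dot{a}\rvert^{2}\,\mathrm{d}t + \frac{2\alpha(\alpha+1)}{(\alpha+2)^{2}}\int_{t_{0}}^{+\infty}\frac{a(t)^{2}}{t^{2}}\,\mathrm{d}t \geq \lVert a\xi^{+}\rVert^{2},
\end{equation*}
which is already coercive on this component. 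For the tangential part, a Taylor expansion of $\mathcal{U}$ along a sphere geodesic issuing from $\xi^{+}$ with velocity $\eta\perp\xi^{+}$, using $\nabla U(\xi^{+}) = -\alpha U(\xi^{+})\xi^{+}$, yields
\begin{equation*}
\langle \nabla^{2}\mathcal{U}(\xi^{+})\eta,\eta\rangle = \langle \nabla^{2}U(\xi^{+})\eta,\eta\rangle + \alpha U(\xi^{+})\lvert\eta\rvert^{2}.
\end{equation*}
Setting $\delta_{0} := \nu_{1} + \frac{(2-\alpha)^{2}}{8}U(\xi^{+})>0$ by $(\textsc{BS})$ and using the algebraic identity $\frac{(2-\alpha)^{2}}{8} + \alpha = \frac{(\alpha+2)^{2}}{8}$, one obtains
\begin{equation*}
\omega^{-\alpha-2}\langle \nabla^{2}U(\xi^{+})\psi^{\perp},\psi^{\perp}\rangle \geq \Bigl(-\tfrac{1}{4} + \omega^{-\alpha-2}\delta_{0}\Bigr)\lvert\psi^{\perp}\rvert^{2}.
\end{equation*}

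The tangential part of the form is thus bounded below by $\int \lvert\dot\psi^{\perp}\rvert^{2}\,\mathrm{d}t - \bigl(\tfrac{1}{4} - \omega^{-\alpha-2}\delta_{0}\bigr)\int \lvert\psi^{\perp}\rvert^{2}/t^{2}\,\mathrm{d}t$; applying Proposition~\ref{prop-Hardy} to the second integral then yields a lower bound of $4\omega^{-\alpha-2}\delta_{0}\,\lVert\psi^{\perp}\rVert^{2}$. Summing the two contributions establishes the claim with $\kappa = \min\bigl\{1,\,4\omega^{-\alpha-2}\delta_{0}\bigr\}>0$. The delicate point of the argument is the precise matching between the $(\textsc{BS})$-threshold $(2-\alpha)^{2}/8$ and the sharp Hardy constant $1/4$: the identity $(2-\alpha)^{2}/8 + \alpha = (\alpha+2)^{2}/8$ is exactly what ensures that the spectral lower bound on the sphere Hessian, once transferred to the ambient Hessian via the Taylor identity and then rescaled by $\omega^{-\alpha-2}$, leaves a strictly positive deficit with respect to the critical Hardy constant. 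Without this alignment one would fall at or below the Hardy threshold, and no coercivity could be expected in $\mathcal{D}^{1,2}_{0}(t_{0},+\infty)$.
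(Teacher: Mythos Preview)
Your proof is correct and follows essentially the same route as the paper: reduce via homogeneity to $\omega^{-\alpha-2}t^{-2}\nabla^{2}U(\xi^{+})$, split orthogonally along $\xi^{+}$, use the identity $\langle\nabla^{2}U(\xi^{+})\eta,\eta\rangle = \langle\nabla^{2}\mathcal{U}(\xi^{+})\eta,\eta\rangle - \alpha\,U(\xi^{+})\lvert\eta\rvert^{2}$ on the tangential part, invoke $(\textsc{BS})$ together with the algebraic identity $(2-\alpha)^{2}/8 + \alpha = (\alpha+2)^{2}/8$ to land strictly above the Hardy threshold $-1/4$, and conclude via Proposition~\ref{prop-Hardy}. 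The only cosmetic difference is that the paper merges the radial and tangential contributions into a single pointwise bound $(-\tfrac{1}{4}+c)\lvert\psi\rvert^{2}/t^{2}$ before applying Hardy once, whereas you keep the two components separate and apply Hardy only to the tangential one; your treatment is marginally sharper on the radial side. One tiny caveat: your stated lower bound $4\omega^{-\alpha-2}\delta_{0}\lVert\psi^{\perp}\rVert^{2}$ for the tangential form is literally valid only when $\tfrac{1}{4}-\omega^{-\alpha-2}\delta_{0}\geq 0$ (otherwise Hardy goes the wrong way), but in the complementary case the tangential form is trivially $\geq \lVert\psi^{\perp}\rVert^{2}$, so your final $\kappa=\min\{1,\,4\omega^{-\alpha-2}\delta_{0}\}$ is correct regardless.
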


\begin{proof}
Let $\psi \in \mathcal{D}^{1,2}_{0}(t_{0},+\infty)$. From \eqref{def-y0}, by using the $(-\alpha-2)$-homogeneity of $\nabla^{2} U$, we deduce that
\begin{equation*}
\bigl{\langle} \nabla^{2} U(y_{0}(t))\psi(t),\psi(t) \bigr{\rangle} 
= \omega^{-\alpha-2} t^{-2} \bigl{\langle} \nabla^{2} U(\xi^{+})\psi(t),\psi(t) \bigr{\rangle},
\quad \text{for every $t\geq t_{0}$.}
\end{equation*}
By exploiting the decomposition
\begin{equation*}
\psi(t)= v(t) + \lambda(t) \xi^{+}, \quad \text{with $v(t)\in\mathcal{T}_{\xi^{+}}\mathcal{E}$ and $\lambda(t)\in\mathbb{R}$,}
\end{equation*}
(here, $\mathcal{T}_{\xi^{+}}\mathcal{E}$ denotes the tangent space of $\mathcal{E}$ at $\xi^{+}$)
and formula \eqref{eq-diffseccc}, we have
\begin{align*}
&\bigl{\langle} \nabla^{2} U(\xi^{+})\psi(t),\psi(t) \bigr{\rangle} = 
\\
&= \bigl{\langle} \nabla^{2} U(\xi^{+})v(t),v(t) \bigr{\rangle}
+ 2 \lambda(t) \bigl{\langle} \nabla^{2} U(\xi^{+})\xi^{+},v(t) \bigr{\rangle}
+ (\lambda(t))^{2} \bigl{\langle} \nabla^{2} U(\xi^{+})\xi^{+},\xi^{+} \bigr{\rangle}
\\
&= \bigl{\langle} \nabla^{2} U(\xi^{+})v(t),v(t) \bigr{\rangle}
+ 2 \alpha (\alpha+1) \lambda(t) \, U(\xi^{+}) \langle \xi^{+},v(t) \rangle
\\
& \quad 
+ \alpha (\alpha+1) (\lambda(t))^{2} U(\xi^{+}) \bigl{\langle} \xi^{+},\xi^{+} \bigr{\rangle}
\\
&= \bigl{\langle} \nabla^{2} U(\xi^{+})v(t),v(t) \bigr{\rangle}
+ \alpha (\alpha+1) (\lambda(t))^{2} U(\xi^{+}),
\quad \text{for every $t\geq t_{0}$,}
\end{align*}
where the last equality follows from the facts that $\langle \xi^{+},v(t) \rangle=0$ and $\lvert \xi^{+} \rvert =1$.
By arguing as in the proof of \cite[Proposition~16]{Mo-notes} (dealing with the Newtonian $N$-body potential), recalling that $\mathcal{U}=U|_{\mathcal{E}}$, we infer that
\begin{equation}\label{Moeckel}
\bigl{\langle} \nabla^{2} U(\xi^{+})v(t),v(t) \bigr{\rangle} 
= \bigl{\langle} \nabla^{2} \mathcal{U}(\xi^{+})v(t),v(t) \bigr{\rangle} - \alpha \, \mathcal{U}(\xi^{+}) \lvert v(t) \rvert^{2},
\end{equation}
for every $t\geq t_{0}$.

From the above discussion, we obtain that
\begin{align*}
&\bigl{\langle} \nabla^{2} U(y_{0}(t))\psi(t),\psi(t) \bigr{\rangle} =
\\
&= \dfrac{\omega^{-\alpha-2}}{t^{2}} \Bigl{(} \bigl{\langle} \nabla^{2} U(\xi^{+})v(t),v(t) \bigr{\rangle}
+ \alpha (\alpha+1) (\lambda(t))^{2} U(\xi^{+})\Bigr{)}
\\
&= \dfrac{\omega^{-\alpha-2}}{t^{2}} \Bigl{(} \bigl{\langle} \nabla^{2} \mathcal{U}(\xi^{+})v(t),v(t) \bigr{\rangle} 
- \alpha \, \mathcal{U}(\xi^{+}) \lvert v(t) \rvert^{2} + \alpha (\alpha+1) (\lambda(t))^{2} \, \mathcal{U}(\xi^{+}) \Bigr{)},
\end{align*}
for every $t\geq t_{0}$. 
Condition $(\textsc{BS})$ implies that there exists $\zeta\in\mathopen{]}0,(2-\alpha)^{2}/8\mathclose{[}$ such that
\begin{equation*}
\bigl{\langle} \nabla^{2} \mathcal{U}(\xi^{+})v(t),v(t) \bigr{\rangle} \geq \biggl{(} - \dfrac{(2-\alpha)^{2}}{8}+\zeta \biggr{)} \mathcal{U}(\xi^{+}) \lvert v(t) \rvert^{2},
\end{equation*}
for every $t\geq t_{0}$. Therefore, we have
\begin{align*}
&\bigl{\langle} \nabla^{2} U(y_{0}(t))\psi(t),\psi(t) \bigr{\rangle} \geq
\\
&\geq \dfrac{\omega^{-\alpha-2}}{t^{2}} \biggl{[} \biggl{(} - \dfrac{(2-\alpha)^{2}}{8}+\zeta \biggr{)} \mathcal{U}(\xi^{+}) \lvert v(t) \rvert^{2}
- \alpha \, \mathcal{U}(\xi^{+}) \lvert v(t) \rvert^{2}
\\
& \hspace{181pt}  + \alpha (\alpha+1) (\lambda(t))^{2} \, \mathcal{U}(\xi^{+}) \biggr{]}
\\
& = \dfrac{\omega^{-\alpha-2}}{t^{2}} \, \mathcal{U}(\xi^{+}) \biggl{[} \biggl{(}-\dfrac{(2-\alpha)^{2}}{8}+\zeta-\alpha\biggr{)} \lvert v(t) \rvert^{2} + \alpha (\alpha+1) (\lambda(t))^{2}  \biggr{]}
\\
&=  \dfrac{2}{(\alpha+2)^{2} \, \mathcal{U}(\xi^{+}) t^{2}} \, \mathcal{U}(\xi^{+}) \biggl{[} \biggl{(}- \dfrac{(\alpha+2)^{2}}{8} +\zeta \biggr{)} \lvert v(t) \rvert^{2}+  \alpha (\alpha+1) (\lambda(t))^{2}  \biggr{]}
\\
&\geq  \dfrac{\lvert v(t) \rvert^{2}}{t^{2}}  \biggl{(} -\dfrac{1}{4} + \dfrac{2\zeta }{(\alpha+2)^{2}} \biggr{)}
\\
&\geq  \dfrac{\lvert \psi(t) \rvert^{2}}{t^{2}}  \biggl{(} -\dfrac{1}{4} + \dfrac{2\zeta }{(\alpha+2)^{2}} \biggr{)},
\end{align*}
for every $t\geq t_{0}$. 

By Hardy inequality \eqref{Hardy}, we finally obtain
\begin{equation*}
\int_{t_{0}}^{+\infty} \Bigl{(} \lvert \dot{\psi}(t) \rvert^{2} +  \bigl{\langle} \nabla^{2} U(y_{0}(t))\psi(t),\psi(t) \bigr{\rangle} \Bigr{)}\,\mathrm{d}t \geq \dfrac{8\zeta }{(\alpha+2)^{2}} \int_{t_{0}}^{+\infty} \lvert \dot{\psi}(t) \rvert^{2} \,\mathrm{d}t,
\end{equation*}
as desired.
\end{proof}

Summing up, by the implicit function theorem, there exist $\varepsilon^{*} \in \mathopen{]}0,1\mathclose{[}$ and $r^{*} \in \mathopen{]}0,r\mathclose{[}$ such that, 
for every $\varepsilon \in \mathopen{[}0,\varepsilon^{*}\mathclose{[}$ and for every $\sigma \in B_{r^{*}}$, there exists a solution $\varphi \in \Omega_{\rho}$
of the equation
\begin{equation*}
F(\varepsilon,\sigma,\varphi) = 0.
\end{equation*}
This means that $\varphi$ is a critical point of the action functional $\mathcal{A}_{\varepsilon,\sigma}$.
Since the space $\mathcal{C}^{\infty}_{\mathrm{c}}(\mathopen{]}t_{0},+\infty\mathclose{[})$ is contained in $\mathcal{D}^{1,2}_{0}(t_{0},+\infty)$, we deduce that $\varphi$ is a solution of equation \eqref{eq-phi} in the sense of distributions. By a standard regularity argument, $\varphi \in \mathcal{C}^{2}(\mathopen{[}t_{0},+\infty\mathclose{[})$ and solves the equation in the classical sense.

We summarize the above discussion in the final proposition of this section.

\begin{theorem}\label{th-3.1}
There exist $\varepsilon^{*} \in \mathopen{]}0,1\mathclose{[}$ and $r^{*}\in \mathopen{]}0,r\mathclose{[}$ such that, 
for every $\varepsilon \in \mathopen{[}0,\varepsilon^{*}\mathclose{[}$ and $\sigma \in B_{r^{*}}$,
there exists $\varphi \in \mathcal{C}^{2}(\mathopen{[}t_{0},+\infty\mathclose{[}) \cap \mathcal{D}^{1,2}_{0}(t_{0},+\infty)$ solution of \eqref{eq-phi}.
\end{theorem}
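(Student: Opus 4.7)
The plan is to apply the implicit function theorem to the map $F(\varepsilon,\sigma,\varphi) = \mathrm{d}\mathcal{A}_{\varepsilon,\sigma}(\varphi)$ at the point $(0,0,0)$. The regularity hypotheses of the theorem are already supplied by Proposition~\ref{prop-3.1}: $F$ is continuous on $\mathopen{[}0,1\mathclose{[} \times B_{r} \times \Omega_{\rho}$ and has a continuous differential $\mathrm{D}_{\varphi} F$. Moreover, the computation preceding the statement shows that $h_{0,0} \equiv 0$ and $\nabla K_{0,0}(t,0) \equiv 0$, so $F(0,0,0)=0$, and the expression for $\mathrm{D}_{\varphi} F(0,0,0)$ given in \eqref{diff-for} is explicit.

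The crucial hypothesis to verify is that $\mathrm{D}_{\varphi} F(0,0,0)$ is invertible as a bounded linear operator from $\mathcal{D}^{1,2}_{0}(t_{0},+\infty)$ to its dual. I would read this invertibility as the existence and uniqueness of solutions to the variational problem
\begin{equation*}
\mathrm{D}_{\varphi} F(0,0,0)[\psi,\zeta] = T[\zeta], \qquad \text{for every }\zeta\in\mathcal{D}^{1,2}_{0}(t_{0},+\infty),
\end{equation*}
for every prescribed $T\in\bigl(\mathcal{D}^{1,2}_{0}(t_{0},+\infty)\bigr)^{\!*}$. Since $\mathcal{D}^{1,2}_{0}(t_{0},+\infty)$ is a Hilbert space by Proposition~\ref{hilbert} and the bilinear form \eqref{diff-for} is bounded (thanks to the Hardy inequality of Proposition~\ref{prop-Hardy} applied to the potential term), the Lax--Milgram theorem reduces invertibility to the coercivity inequality $\mathrm{D}_{\varphi} F(0,0,0)[\psi,\psi]\geq \kappa\|\psi\|^{2}$, which is exactly the content of Proposition~\ref{prop-3.2}. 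Applying the implicit function theorem then yields constants $\varepsilon^{*}\in\mathopen{]}0,1\mathclose{[}$ and $r^{*}\in\mathopen{]}0,r\mathclose{[}$ together with a (unique, continuous) solution $\varphi=\varphi(\varepsilon,\sigma)\in\Omega_{\rho}$ of $F(\varepsilon,\sigma,\varphi)=0$ for every $(\varepsilon,\sigma)\in\mathopen{[}0,\varepsilon^{*}\mathclose{[}\times B_{r^{*}}$.

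It remains to upgrade $\varphi$ from a critical point of the action $\mathcal{A}_{\varepsilon,\sigma}$ to a classical $\mathcal{C}^{2}$ solution of \eqref{eq-phi}. By Proposition~\ref{hilbert}, $\mathcal{C}^{\infty}_{\mathrm{c}}(\mathopen{]}t_{0},+\infty\mathclose{[})\subseteq \mathcal{D}^{1,2}_{0}(t_{0},+\infty)$; testing $\mathrm{d}\mathcal{A}_{\varepsilon,\sigma}(\varphi)=0$ against compactly supported smooth test functions shows that $\varphi$ is a distributional solution of $\ddot\varphi = \nabla K_{\varepsilon,\sigma}(t,\varphi) - h_{\varepsilon,\sigma}(t)$. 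Since the right-hand side is continuous in $t$ along the trajectory (estimates \eqref{K2} and Lemma~\ref{lem-h} guarantee local boundedness, and the smoothness of $U$ and $W$ on $\Sigma$ away from the origin, together with the uniform lower bound \eqref{y+phi}, ensures the required regularity), a standard bootstrap argument gives $\varphi\in\mathcal{C}^{2}(\mathopen{[}t_{0},+\infty\mathclose{[})$ and the equation holds classically.

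The main obstacle, and the conceptual heart of the proof, is the invertibility step: it is there that the $(\textsc{BS})$-condition on the central configuration enters via the coercivity of the Hessian bilinear form, and it is there that the choice of the functional space $\mathcal{D}^{1,2}_{0}(t_{0},+\infty)$ pays off, because the scale-invariant Hardy inequality \eqref{Hardy} exactly matches the $1/t^{2}$ decay of $\nabla^{2}U(y_{0}(t))$ produced by the homothetic parabolic profile. Everything else is bookkeeping, already carried out in Propositions~\ref{prop-3.1} and~\ref{prop-3.2}.
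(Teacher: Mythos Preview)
Your proposal is correct and follows essentially the same route as the paper: verify $F(0,0,0)=0$, invoke Proposition~\ref{prop-3.1} for the regularity of $F$, use Proposition~\ref{prop-3.2} together with the Lax--Milgram theorem to obtain the invertibility of $\mathrm{D}_{\varphi}F(0,0,0)$, apply the implicit function theorem, and then upgrade the critical point to a classical $\mathcal{C}^{2}$ solution via testing against $\mathcal{C}^{\infty}_{\mathrm{c}}$ functions and a standard bootstrap.
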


\begin{remark}\label{rem-3.1}
It is worth noticing that, due to Proposition~\ref{prop-3.2}, it easily follows that $\varphi$ is a non-degenerate local minimum for the corresponding action functional $\mathcal{A}_{\varepsilon,\sigma}$.
\end{remark}

\subsection{Conclusion of the proof}\label{section-5.3}

We are now in a position to conclude the proof of Theorem~\ref{th-main}. To this end, let us consider the numbers $\varepsilon^{*}$ and $r^{*}$ given in Theorem~\ref{th-3.1} and set
\begin{equation*}
R' =  \max\Bigl{\{} (\varepsilon^{*})^{-\frac{3}{2+\alpha}} \sqrt{\omega^{2}+(r^{*})^{2}} , R\Bigr{\}}, \qquad \eta' = \max\Biggl{\{} \eta, \dfrac{\omega}{\sqrt{\omega^{2}+(r^{*})^{2}}} \Biggr{\}};
\end{equation*}
in such a way $\eta' \in \mathopen{[}\eta,1\mathclose{[}$ and $\mathcal{T}(\xi^+,R',\eta') \subseteq \mathcal{T}(\xi^+,R,\eta)\subseteq\Sigma$.

Let us fix $x_{0}\in \mathcal{T}(\xi^+,R',\eta')$. We claim that there exist $\varepsilon \in \mathopen{]}0,\varepsilon^{*}\mathclose{[}$ and $\sigma \in B_{r^{*}}$ such that
\begin{equation} \label{eq-x0fin}
x_{0}= \varepsilon^{-\frac{3}{2+\alpha}} \, \bigl{(} \omega \xi^{+} +\sigma \bigr{)}.
\end{equation}
Indeed, we set
\begin{equation*}
\varepsilon = \biggl{(} \dfrac{\omega}{\langle x_{0}, \xi^{+} \rangle} \biggr{)}^{\!\frac{2+\alpha}{3}}, \qquad \sigma = \omega \biggl{(} \dfrac{x_{0}}{\langle x_{0},\xi^{+} \rangle} - \xi^{+} \biggr{)}.
\end{equation*}
Then, $\langle \sigma, \xi^{+} \rangle = 0$ so that, using the facts that $\langle x_{0}, \xi^{+} \rangle > \eta' \lvert x_{0} \rvert$ and $\lvert x_{0} \rvert>R'$, we obtain
\begin{align*}
&\lvert \sigma \rvert^{2} 
= \omega^{2} \biggl{(} \dfrac{\lvert x_{0} \rvert^{2}}{\langle x_{0}, \xi^{+} \rangle^{2}} + \lvert \xi^{+} \rvert^{2} - 2 \biggr{)} 
<  \omega^{2} \biggl{(} \dfrac{1}{(\eta')^{2}} -1 \biggr{)} \leq (r^{*})^{2},
\\
&\varepsilon <  \biggl{(} \dfrac{\omega}{\eta' \lvert x_{0} \rvert} \biggr{)}^{\!\frac{2+\alpha}{3}} 
<  \biggl{(} \dfrac{\omega}{\eta' R'} \biggr{)}^{\!\frac{2+\alpha}{3}}
\leq \varepsilon^{*},
\end{align*}
as desired.

Now, given $\varepsilon$ and $\sigma$ as in \eqref{eq-x0fin}, let us consider the solution $\varphi$ of \eqref{eq-phi} as in Theorem~\ref{th-3.1} and define
\begin{equation*}
y(t)=y_{\sigma}(t) + \varphi(t),\quad \text{for every $t \geq t_{0}$,}
\end{equation*}
as in \eqref{def-y}. As already observed in Section~\ref{section-4.1}, since $\varphi$ is a solution of \eqref{eq-phi} on $\mathopen{[}t_{0},+\infty\mathclose{[}$ the function $y$ is a solution of \eqref{eq-y0} on $\mathopen{[}t_{0},+\infty\mathclose{[}$ and the function $x$ defined by
\begin{equation} \label{eq-cambioxy}
x(t)= \varepsilon^{-\frac{3}{2+\alpha}} \, y\bigl{(}\varepsilon^{\frac{3}{2}}t+t_{0}\bigr{)}, \quad \text{for every $t \geq 0$,}
\end{equation}
as in \eqref{def-xy}, is a solution of \eqref{eq-main} on $\mathopen{[}0,+\infty\mathclose{[}$. We also notice that
\begin{equation*}
x(0)= \varepsilon^{-\frac{3}{2+\alpha}} \, y(t_{0}) = \varepsilon^{-\frac{3}{2+\alpha}} \, \bigl{(} \omega \xi^{+} +\sigma \bigr{)}=x_{0},
\end{equation*}
by \eqref{eq-x0fin}. Moreover, recalling that $y_{\sigma}(t) = y_{0}(t)$ for $t \geq t_{0}+1$ and using inequality \eqref{Hardy-sqrt}, it holds that
$\lvert y(t) \rvert \sim \omega t^{\frac{2}{\alpha+2}}$ for $t \to +\infty$, implying
$\lvert x(t) \rvert \sim \omega t^{\frac{2}{\alpha+2}}$ for $t \to +\infty$, as well.

In order to conclude the proof we thus need to show 
that $x$ is asymptotic to the central configuration $\xi^{+}$ and that $\dot x(t) \to 0$ for $t \to +\infty$. Recalling the change of variables \eqref{eq-cambioxy}, it is immediate to see that this is the case if and only if the corresponding properties are verified by $y$, that is,
\begin{equation} \label{eq-limitey1}
\lim_{t\to+\infty} \dfrac{y(t)}{|y(t)|} = \xi^{+}
\end{equation}
and
\begin{equation} \label{eq-limitey2}
\displaystyle \lim_{t\to+\infty} \dot{y}(t) = 0.
\end{equation}

As far as \eqref{eq-limitey1} is concerned, we recall again that $y_{\sigma}(t) = y_{0}(t)$ for $t \geq t_{0}+1$, implying
\begin{align*}
\dfrac{y(t)}{|y(t)|} & = \dfrac{y_{0}(t)+\varphi(t)}{|y_{0}(t)+\varphi(t)|}
\displaystyle = \dfrac{y_{0}(t)+\varphi(t)}{ \bigl{(}|y_{0}(t)|^{2} + |\varphi(t)|^{2} + 2 \langle y_{0}(t),\varphi(t) \rangle \bigr{)}^{\!\frac{1}{2}} } \\ 
& = \dfrac{y_{0}(t)+\varphi(t)}{|y_{0}(t)| \biggl{(} 1+ \dfrac{|\varphi(t)|^{2}}{|y_{0}(t)|^{2}} + \dfrac{2\langle y_{0}(t),\varphi(t) \rangle}{|y_{0}(t)|^{2}}\biggr{)}^{\!\frac{1}{2}}} \\
& = \biggl{(}\dfrac{y_{0}(t)}{|y_{0}(t)|}+ \dfrac{\varphi(t)}{|y_{0}(t)|}\biggr{)}
\biggl{(} 1+ \dfrac{|\varphi(t)|^{2}}{|y_{0}(t)|^{2}} + \dfrac{2 \langle y_{0}(t),\varphi(t) \rangle}{|y_{0}(t)|^{2}}\biggr{)}^{\!-\frac{1}{2}},
\end{align*}
for every $t \geq t_{0}+1$. Recalling that $y_{0}(t)=\omega t^{\frac{2}{\alpha+2}} \xi^{+}$ and using \eqref{Hardy-sqrt}, we infer that
\begin{equation*}
\frac{y_{0}(t)}{\lvert y_{0}(t) \rvert} = \xi^{+} \quad \text{ and } \quad \displaystyle \lim_{t\to+\infty} \dfrac{\varphi(t)}{|y_{0}(t)|} = 0,
\end{equation*}
thus concluding that
\begin{equation*}
\lim_{t\to+\infty} \dfrac{y(t)}{|y(t)|} = \xi^{+}.
\end{equation*}

Finally, we prove the validity of \eqref{eq-limitey2}.
Using \eqref{Ue2} and \eqref{y+phi}, we find
\begin{equation*}
\vert \ddot y(t) \vert \leq \frac{2^{\alpha+1}C''}{\omega^{\alpha+1}} \, t^{-\frac{2(\alpha+1)}{\alpha+2}}, \quad \text{for every $t \geq t_{0}$.}
\end{equation*}
Hence, $\ddot y$ is integrable on $\mathopen{[}t_{0},+\infty\mathclose{[}$ and thus that there exists $\ell\in \mathbb{R}^d$ such that
\begin{equation*}
\lim_{t\to+\infty} \dot{y}(t)=\ell.
\end{equation*}
Since $y(t) = y_0(t) + \varphi(t)$ for $t \geq t_{0}+1$, we infer that
$\lim_{t \to +\infty}\dot\varphi(t) = \ell$, as well. 
On the other hand, since $\varphi\in\mathcal{D}^{1,2}_{0}(t_{0},+\infty)$ we have $\int_{t_{0}}^{+\infty}\lvert\dot{\varphi}(t)\rvert^{2} \,\mathrm{d}t<+\infty$,
implying
\begin{equation*}
\liminf_{t\to+\infty} \, \lvert \dot{\varphi}(t)\rvert = 0.
\end{equation*}
Then, we deduce that $\ell = 0$, thus proving \eqref{eq-limitey2}.

\section{Some applications}\label{section-6}

In this section we present some applications of Theorem~\ref{th-main} to various classical problems of Celestial Mechanics.
From now on, we will use the symbol $\lVert \cdot \rVert$ to denote the Euclidean norm of a vector in $\mathbb{R}^k$ (the specific value of $k$ being clear from the context).

\subsection{The $N$-body problem}\label{section-6.1}

As a first application, we deal with the classical $N$-body problem
\begin{equation*}
m_{i} \ddot q_{i} = -\sum_{j \neq i} \frac{m_{i} m_{j} (q_{i} - q_{j})}{\|q_{i} - q_{j} \|^3}, \qquad i=1,\ldots,N,
\end{equation*}
where $m_i > 0$ and $q_i\in \mathbb{R}^k$, for $i=1,\ldots,N$ and $k \geq 2$. As already observed in the introduction, this problem fits the general framework of equation \eqref{eq-main} with $d = kN$, 
\begin{equation}\label{eq-oggi10}
(\mu_1,\ldots,\mu_d) = (\overbrace{m_1,\ldots,m_1}^{\text{$k$ times}},m_2,\ldots,m_2,\ldots,m_N,\ldots,m_N),
\end{equation}
$W\equiv 0$ and
\begin{equation*}
U(x) = \sum_{i<j}\frac{m_i m_{j}}{\|q_i - q_{j}\|},
\qquad
\text{$x = (q_1,\ldots,q_N)$.}
\end{equation*}
Notice that in this situation the potential $U$ is defined on the set 
\begin{equation*}
\Sigma = \bigl{\{} (q_1,\ldots,q_N)\in \mathbb{R}^{kN} \colon \text{$q_i\neq q_j$ for $i\neq j$}\bigr{\}}.
\end{equation*}

Theorem~\ref{th-main} applies to any minimal central configuration (as the main result in \cite{MaVe-09}) and to any central configuration satisfying the $(\textsc{BS})$-condition; for instance, following \cite[Example~3.12]{BaHuPoTe-PP}, it applies to the collinear configuration for three bodies, two of mass $1$ and one of mass $m\in (0,27/4)$.

\subsection{The $N$-centre problem and the restricted $(N+1)$-body problem}\label{section-6.2}

As a next application, we consider the equation
\begin{equation}\label{eq-centri}
\ddot x = -\sum_{i=1}^{N} \frac{m_{i} (x-c_{i}(t))}{\|x - c_{i}(t) \|^{3}}, \qquad x \in \mathbb{R}^d,
\end{equation}
where $m_{i} > 0$ and $c_{i} \colon \mathbb{R} \to \mathbb{R}^d$ are continuous functions for $i=1,\ldots,N$.
Equation~\eqref{eq-centri} models the motion of a zero-mass particle $x$ under the Newtonian attraction of $N$ moving bodies $c_{i}$ of mass $m_{i}$. The case when all the bodies $c_{i}$ are fixed, namely $c_{i}(t) \equiv c_{i}$ for every $i =1,\ldots,N$, is usually referred to as $N$-centre problem (see \cite{BoDaTe-17} for some bibliography on the problem). For general moving bodies $c_{i}$, we have the following result (in the statement, $\mathbb{S}^{d-1} = \{x \in \mathbb{R}^d \colon \lVert x \rVert = 1 \} = \mathcal{E}$).

\begin{corollary}\label{cor-main}
Let us suppose that
\begin{equation}\label{hp-cor}
\sup_{t \in \mathbb{R}} \, \lVert c_{i}(t)  \rVert < +\infty, \quad \text{for every $i=1,\ldots,N$.}
\end{equation}
Then, for every $\xi^{+} \in \mathbb{S}^{d-1}$, there exist $R' > \sup_{t \in \mathbb{R}} \lVert c_{i}(t)  \rVert$ and $\eta' \in \mathopen{]}0,1\mathclose{[}$ such that, for every $x_{0} \in \mathcal{T}(\xi^{+},R',\eta')$, there exists a parabolic solution $x \colon \mathopen{[}0,+\infty\mathclose{[} \to \mathbb{R}^{d}$ of equation \eqref{eq-centri}, satisfying $x(0) = x_{0}$ and 
\begin{equation*}\lim_{t\to+\infty} \dfrac{x(t)}{\|x(t)\|} = \xi^{+}.
\end{equation*}
Moreover, $\lVert x(t) \rVert \sim \omega t^{\frac{2}{3}}$ for $t \to +\infty$, where $\omega = \sqrt[3]{\frac{9}{2} m}$
and $m = \sum_{i=1}^{N} m_{i}$. 
\end{corollary}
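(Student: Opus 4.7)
The plan is to recast equation \eqref{eq-centri} in the framework of Theorem~\ref{th-main} with the choice $\alpha=1$. Since the moving particle has unit mass, we take $\mu_{1}=\cdots=\mu_{d}=1$, so that the mass scalar product coincides with the Euclidean one and $\mathcal{E}=\mathbb{S}^{d-1}$. Setting $m=\sum_{i=1}^{N}m_{i}$, I would split the right-hand side as
\begin{equation*}
-\sum_{i=1}^{N}\frac{m_{i}(x-c_{i}(t))}{\|x-c_{i}(t)\|^{3}}=\nabla U(x)+\nabla W(t,x),
\end{equation*}
with
\begin{equation*}
U(x)=\frac{m}{\|x\|}, \qquad W(t,x)=\sum_{i=1}^{N}\frac{m_{i}}{\|x-c_{i}(t)\|}-\frac{m}{\|x\|}.
\end{equation*}
Then $U$ is a positive, $-1$-homogeneous potential on $\Sigma=\mathbb{R}^{d}\setminus\{0\}$, while, choosing any $R>\max_{i}\sup_{t\in\mathbb{R}}\|c_{i}(t)\|$ and any $\eta\in\mathopen{]}0,1\mathclose{[}$, the function $W$ is of class $\mathcal{C}^{2}$ on $\mathopen{[}0,+\infty\mathclose{[}\times\mathcal{T}(\xi^{+},R,\eta)$.

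The next step is to verify the asymptotic bound \eqref{hp-main}. Expanding $\|x-c_{i}(t)\|^{-1}$ at infinity (using \eqref{hp-cor}) yields
\begin{equation*}
\frac{1}{\|x-c_{i}(t)\|}=\frac{1}{\|x\|}+\frac{\langle x,c_{i}(t)\rangle}{\|x\|^{3}}+\mathcal{O}\bigl(\|x\|^{-3}\bigr),
\end{equation*}
uniformly in $t$, so that the leading terms in $W(t,x)$ cancel and $W(t,x)=\mathcal{O}(\|x\|^{-2})$; analogous computations (either by differentiating the expansion or by direct estimate) give $\nabla W(t,x)=\mathcal{O}(\|x\|^{-3})$ and $\nabla^{2}W(t,x)=\mathcal{O}(\|x\|^{-4})$. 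Hence \eqref{hp-main} is satisfied with $\beta=2$, and the compatibility condition $4\beta-3\alpha>2$ reduces to $5>2$.

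It remains to check that every point $\xi^{+}\in\mathbb{S}^{d-1}$ is a central configuration satisfying $(\textsc{BS})$. This is immediate from the rotational invariance of $U$: the restriction $\mathcal{U}=U|_{\mathcal{E}}$ is constantly equal to $m$, so each $\xi^{+}\in\mathbb{S}^{d-1}$ is a (degenerate) critical point of $\mathcal{U}$, and $\nabla^{2}\mathcal{U}(\xi^{+})\equiv 0$ on $T_{\xi^{+}}\mathcal{E}$. Its smallest eigenvalue $\nu_{1}=0$ strictly exceeds $-\tfrac{(2-\alpha)^{2}}{8}\mathcal{U}(\xi^{+})=-\tfrac{m}{8}$, so $(\textsc{BS})$ holds. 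Applying Theorem~\ref{th-main} provides $R'\geq R$ and $\eta'\in\mathopen{[}\eta,1\mathclose{[}$ and, for each $x_{0}\in\mathcal{T}(\xi^{+},R',\eta')$, a parabolic solution with the prescribed initial datum and asymptotic direction; the asymptotic size is
\begin{equation*}
\omega=\biggl(\frac{(\alpha+2)^{2}U(\xi^{+})}{2}\biggr)^{\!\frac{1}{\alpha+2}}=\sqrt[3]{\frac{9m}{2}},
\end{equation*}
in agreement with the statement.

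I do not expect any genuine obstacle here: the only points requiring attention are (i) the requirement $R>\max_{i}\sup_{t}\|c_{i}(t)\|$, needed to stay away from collisions with the primaries and guarantee the $\mathcal{C}^{2}$-regularity of $W$ on the relevant cone, and (ii) a clean verification of the $\mathcal{O}$-bounds for $W$ and its two derivatives, which follows from the multipole-type expansion above.
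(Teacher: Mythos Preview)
Your proposal is correct and follows essentially the same route as the paper: both recast \eqref{eq-centri} as \eqref{eq-main} with $\mu_j=1$, $U(x)=m/\|x\|$, the same perturbation $W$, verify \eqref{hp-main} with $\beta=2$ via a multipole-type expansion, and observe that every $\xi^+\in\mathbb{S}^{d-1}$ is a central configuration satisfying $(\textsc{BS})$ (the paper simply notes it is minimizing, while you compute $\nu_1=0>-m/8$ explicitly). The only cosmetic difference is that the paper writes out the explicit leading terms of $\nabla W$ and $\nabla^2 W$ in terms of $g(t)=\sum_i m_i c_i(t)$, whereas you summarize these as $\mathcal{O}$-estimates; both are adequate.
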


\begin{proof}
We are going to check that the assumptions of Theorem~\ref{th-main} are satisfied (in view of Remark \ref{rem-moregeneral}, we can assume that the functions $c_i$ are just continuous).
	
To this end, we first observe that equation \eqref{eq-centri} can be written in the form \eqref{eq-main} with $\mu_j=1$ for $j=1,\ldots, d$,
$U(x)=m/\|x\|$ for $x \in \Sigma = \mathbb{R}^d \setminus \{0\}$ and
\begin{equation*}
W(t,x) = \sum_{i=1}^{N} \frac{m_{i}}{\|x - c_{i}(t)\|}- \frac{m}{\|x\|}, \qquad \text{$x\in\mathbb{R}^{d}$ with $\lVert x \rVert > R = \sup_{t \in \mathbb{R}} \|c_i(t)\| + 1$.} 
\end{equation*}
Let us observe that in this situation we have $\lvert \cdot \rvert = \lVert \cdot \rVert$. Moreover, for the potential $U$ any configuration $\xi^{+}\in \mathbb{R}^d$ with $\lVert \xi^+\rVert=1$ is a minimizing central configuration (indeed, $U$ is constant on $\mathbb{S}^{d-1}$). As far as assumption \eqref{hp-main} is concerned, let us first define $g(t) = \sum_{i=1}^{N} m_{i} c_{i}(t)$, for every $t\in \mathbb{R}$ and let us denote by $x \otimes y$ the square matrix of components $(x \otimes y)_{ij} = x_{i} y_{j}$, for $i,j=1,\ldots, d$; then from \eqref{hp-cor} we deduce that
\begin{align*}
& W(t,x) = \dfrac{\langle g(t),x\rangle}{\|x\|^{3}} + \mathcal{O}\biggl{(} \frac{1}{\|x\|^{3}}\biggr{)}, \\
& \nabla W(t,x) = \frac{g(t)}{\|x\|^{3}} - 3 \frac{\langle g(t),x \rangle x}{\|x\|^{5}} + \mathcal{O}\biggl{(}\frac{1}{\|x\|^{4}}\biggr{)}, \\
& \nabla^{2} W(t,x) = -6 \, \frac{g(t) \otimes x}{\|x\|^{5}} - 3 \, \frac{\langle g(t),x \rangle}{\|x\|^{5}} \textrm{Id}_{\mathbb{R}^d} + 15 \, \frac{\langle g(t),x \rangle}{\|x\|^7} x \otimes x +\mathcal{O}\biggl{(}\frac{1}{\|x\|^{5}}\biggr{)},
\end{align*}
for $\|x\| \to +\infty$, uniformly in $t \in \mathbb{R}$. 
Using once more assumption \eqref{hp-cor} together with elementary linear algebra inequalities, we see that condition \eqref{hp-main} is satisfied with $\beta = 2$. Therefore, Theorem~\ref{th-main} can be applied, yielding the conclusion.
\end{proof}

Let us notice that the elliptic restricted (planar) three-body problem is a particular case of equation \eqref{eq-centri},
where $d =2$, $N=2$ and 
\begin{equation*}
m_1 = \mu, \quad m_2 = 1-\mu, \quad c_1(t) = -\mu q_{0}(t), \quad c_2(t) = (1-\mu) q_{0}(t),
\end{equation*}
with $\mu \in \mathopen{]}0,1\mathclose{[}$ and $q_{0}$ a $2\pi$-periodic function (see \cite{GuSeMaSa-17} for more details). Of course, condition \eqref{hp-cor} is satisfied due to the periodicity, so that Corollary~\ref{cor-main} straightly applies. The elliptic restricted spatial 
three-body problem could be treated in the same manner (simply, $d=3$). More in general, for any bounded motion of $N$ moving bodies $c_i$, a restricted $(N+1)$-body problem can be described by equation \eqref{eq-centri} and Corollary \ref{cor-main} straightly applies.

\subsection{The restricted $(N+H)$-body problem}\label{section-6.3}

As a last application, we deal with the equation
\begin{equation}\label{eq-centri2}
m_j \ddot q_j = -\sum_{i=1}^{N} \frac{\widetilde{m}_{i} m_j (q_j-c_{i}(t))}{\|q_j - c_{i}(t) \|^{3}}-\sum_{l \neq j} \frac{m_{l} m_{j} (q_{l} - q_{j})}{\|q_{l} - q_{j} \|^3}, \qquad j=1,\ldots ,H,
\end{equation}
where $m_j>0$ and $q_j \in \mathbb{R}^{k}$, for $j=1,\ldots, H$, $\widetilde{m}_{i} > 0$ and $c_{i} \colon \mathbb{R} \to \mathbb{R}^k$ are continuous functions, for $i=1,\ldots,N$. We assume $H \geq 2$, $N \geq 2$ and $k \geq 2$.

Equation \eqref{eq-centri2} models the motion of $H$ bodies $q_j$ with mass $m_j$, subjected to the reciprocal Newtonian attraction as well as the attraction of $N$ moving bodies $c_i$ with mass $\widetilde{m}_{i}$, whose motion however is not affected by the bodies $q_j$.
We can interpret this equation as a limiting model for the motion of $N+H$ bodies, when $N$ of them have big mass (the primaries) while the masses of the other $H$ bodies go to zero. For this reason, we refer to equation \eqref{eq-centri2} as a restricted $(N+H)$-body problem; notice, indeed, that in the case $H=1$ the second term in the right-hand side of equation \eqref{eq-centri2} disappears, so that we end up with equation 
\eqref{eq-centri}.

Let us notice that \eqref{eq-centri2} can be written in the form \eqref{eq-main}, where $d=kH$,
\begin{equation*}
(\mu_1,\ldots,\mu_d) = (\overbrace{m_1,\ldots,m_1}^{\text{$k$ times}},m_2,\ldots,m_2,\ldots,m_H,\ldots,m_H),
\end{equation*}
and
the functions $U$ and $W$ are given by
\begin{equation*}
U(x) = \widetilde{m} \sum_{j=1}^H \frac{m_j}{\|q_j\|} + \sum_{l<j}\frac{m_l m_{j}}{\|q_l - q_{j}\|},
\end{equation*}
and 
\begin{equation*}
W(t,x) = \sum_{j=1}^{H}\sum_{i=1}^{N} \frac{\widetilde{m}_i m_{j}}{\| q_j - c_{i}(t) \|}- \widetilde{m} \sum_{j=1}^H \frac{m_j}{\|q_j\|},
\end{equation*}
for $x = (q_1,\ldots,q_H)$ and $\widetilde{m}=\sum_{i=1}^N \widetilde{m}_i$. Notice that the natural domain of $U$ is here given by
\begin{equation*}
\Sigma = \bigl{\{} (q_1,\ldots,q_H)\in \mathbb{R}^{kH} \colon \text{$q_j \neq 0$, $q_l\neq q_j$ for $l\neq j$} \bigr{\}}.
\end{equation*}

Given $\xi^+\in \mathcal{E}$ a central configuration for $U$, and assuming that there exists $\Xi>0$ such that
\begin{equation}\label{hp-cor2}
\sup_{t \in \mathbb{R}} \|c_{i}(t)\| < \Xi, \quad \text{for every $i=1,\ldots,N$,}
\end{equation}
we claim that there exist $R > 0$ and $\eta \in \mathopen{]}0,1\mathclose{[}$ such that $W$ is defined in $\mathrm{cl}(\mathcal{T}(\xi^+,R, \eta)) \subseteq \Sigma$. To prove this, we first introduce the cone 
\begin{equation*}
\mathcal{C}(\xi^+,\eta) = \bigl{\{} x \in \mathbb{R}^{kH} \setminus \{0\} \colon \langle x, \xi^+ \rangle \geq \eta | x | \bigr{\}}.
\end{equation*}
By continuity, we can fix $\eta \in \mathopen{]}0,1\mathclose{[}$ such that $\mathcal{C}(\xi^+,\eta) \subseteq \Sigma$.
As a next step, we define the $0$-homogeneous functions $r_j(x) = \lVert q_j \rVert / \lvert x \rvert$ for $x \in \mathcal{C}(\xi^+,\eta)$.
Then
\begin{equation}\label{stima0omogeneo}
0 < \min_{\substack{x \in \mathcal{C}(\xi^+,\eta) \\ \lvert x \rvert = 1}} r_j(x)\leq r_j(x) \leq \max_{\substack{x \in \mathcal{C}(\xi^+,\eta) \\ \lvert x \rvert = 1}} r_j(x), \quad \text{for every $x \in \mathcal{C}(\xi^+,\eta)$,}
\end{equation}
so that we can fix $R > 0$ so large that, for $x \in \mathcal{C}(\xi^+,\eta)$, $\lvert x \rvert > R$ implies $\lVert q_j \rVert > \Xi + 1$ for every $j=1,\ldots,H$. The claim is thus proved.

We now check that assumption \eqref{hp-main} is satisfied. To this end, let us write
\begin{equation*}
W(t,x)=\sum_{j=1}^H W_j(t,q_j),
\end{equation*}
where 
\begin{equation*}
W_j(t,q_j)=m_j \left(\sum_{i=1}^N\frac{\widetilde{m}_{i}}{\| q_j - c_{i}(t) \|}-  \frac{\widetilde{m}}{\|q_j\|}\right), \qquad j=1, \ldots, H.
\end{equation*}
By arguing as in the proof of Corollary~\ref{cor-main}, it is possible to show that 
\begin{equation} \label{eq-oggi11}
\lvert W_j(t,q_j) \rvert + \|q_j \|\, \|D W_j(t,q_j)\| + \|q_j\|^{2} \|D^{2} W_j(t,q_j)\| = \mathcal{O}\bigl{(}\|q_j\|^{-2} \bigr{)},
\end{equation}
for $\|q_j\|\to +\infty$, $j=1,\ldots, H$, uniformly in $t\in \mathbb{R}$, where $D$ and $D^2$ stand for the Euclidean gradient and for the Hessian matrix, respectively. On the other hand, by \eqref{stima0omogeneo} and the fact that $\lVert\cdot\rVert$ and $\lvert\cdot\rvert$ are equivalent norms in $\mathbb{R}^d$ it follows that $\|x\|\to +\infty$ with 
 $x \in \mathcal{T}(\xi^+, R, \eta)$ implies that $\|q_j\|\to +\infty$ for every $j$. Hence
condition \eqref{eq-oggi11} yields
\begin{equation*}
\lvert W(t,x) \rvert + \|x\|\, \|D W(t,x)\| + \|x\|^{2} \|D^{2} W(t,x)\| = \mathcal{O}\bigl{(}\|x\|^{-2} \bigr{)},
\end{equation*}
for $\|x\|\to +\infty$ with $x \in \mathcal{T}(\xi^+, \Xi+1, \eta)$, uniformly in $t\in \mathbb{R}$. Using again the equivalence of $\lVert\cdot\rVert$ and $\lvert\cdot\rvert$ together with the facts that $\nabla = M^{-1} D$, $\nabla^2 = M^{-1}D^2$ (with $M$ the diagonal matrix with entries $\mu_1,\ldots,\mu_d$), we conclude that assumption \eqref{hp-main} is satisfied.

Hence, Theorem~\ref{th-main} applies and the following result holds true.

\begin{corollary}\label{cor-main2}
Assume condition \eqref{hp-cor2} and let $\xi^+\in \mathcal{E}$ be a central configuration for $U$ satisfying the $(\textsc{BS})$-condition. 
Then, there exist $R'>\Xi+1$ and $\eta' \in \mathopen{]}0,\eta\mathclose{[}$ such that, for every $x_{0} \in \mathcal{T}(\xi^{+},R',\eta')$, there exists a parabolic solution $x \colon \mathopen{[}0,+\infty\mathclose{[} \to \mathbb{R}^{d}$ of equation \eqref{eq-centri}, satisfying $x(0) = x_{0}$ and 
\begin{equation*}\lim_{t\to+\infty} \dfrac{x(t)}{\lvert x(t) \rvert} = \xi^{+}.
\end{equation*}
Moreover, $\lvert x(t) \rvert \sim \omega t^{\frac{2}{3}}$ for $t \to +\infty$, where $\omega=\sqrt[3]{9U(\xi^{+}) /2}$. 
\end{corollary}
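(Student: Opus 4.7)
The strategy is to verify the three structural hypotheses of Theorem~\ref{th-main} for the system \eqref{eq-centri2} and then invoke it; most of the work has already been carried out in the paragraphs preceding the statement. The potential $U$ is manifestly positive, of class $\mathcal{C}^\infty$, and $-1$-homogeneous on the open cone $\Sigma$, so the homogeneity condition holds with $\alpha = 1$. The $(\textsc{BS})$-condition at $\xi^+$ is assumed. The only substantial point is the perturbation estimate \eqref{hp-main} for $W$, where since $\alpha = 1$ it suffices to exhibit some $\beta > 5/4$ for which the required decay holds; we will show $\beta = 2$ works.

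The decay of $W$ is checked componentwise. Splitting $W(t,x) = \sum_{j=1}^H W_j(t,q_j)$, each summand is
\begin{equation*}
W_j(t,q_j) = m_j \sum_{i=1}^N \widetilde m_i \biggl{(} \frac{1}{\lVert q_j - c_i(t)\rVert} - \frac{1}{\lVert q_j\rVert} \biggr{)}.
\end{equation*}
A Taylor expansion in the small parameter $c_i(t)/\lVert q_j\rVert$, using the uniform bound \eqref{hp-cor2}, produces the dominant dipole-type term of order $\lVert q_j\rVert^{-2}$ (which does not cancel in general), and similar expansions for the first and second Euclidean derivatives yield \eqref{eq-oggi11}. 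This is exactly the same computation performed in the proof of Corollary~\ref{cor-main} applied, $j$ by $j$, to the single body $q_j$ against the moving centres $c_i$.

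To pass from the componentwise bound \eqref{eq-oggi11} to the bound \eqref{hp-main} on $x = (q_1,\ldots,q_H)$ we use the cone geometry. The functions $r_j(x) = \lVert q_j\rVert/\lvert x\rvert$ are $0$-homogeneous and continuous on $\mathrm{cl}(\mathcal{T}(\xi^+,R,\eta)) \subseteq \Sigma$, hence by compactness of the cross-section we have $\lVert q_j\rVert \asymp \lvert x\rvert$ uniformly on the cone, as recorded in \eqref{stima0omogeneo}. Combined with the equivalence of $\lVert\cdot\rVert$ and $\lvert\cdot\rvert$ on $\mathbb{R}^d$ and the relations $\nabla = M^{-1}D$, $\nabla^2 = M^{-1}D^2$, this transfers the per-particle $\lVert q_j\rVert^{-2}$ decay into an $\lvert x\rvert^{-2}$ decay for $W$, $\lvert x\rvert \lvert \nabla W\rvert$ and $\lvert x\rvert^2 \lvert\nabla^2 W\rvert$. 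Thus \eqref{hp-main} holds with $\beta = 2$, and $4\beta - 3\alpha = 5 > 2$.

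All hypotheses being met, Theorem~\ref{th-main} applies and furnishes constants $R' \geq R > \Xi + 1$ and $\eta' \in \mathopen{[}\eta,1\mathclose{[}$ (up to shrinking $\eta$ we may place $\eta'$ in the interval $\mathopen{]}0,\eta\mathclose{[}$ as stated, or take $\mathopen{[}\eta,1\mathclose{[}$) such that for every $x_0 \in \mathcal{T}(\xi^+,R',\eta')$ a parabolic solution $x \colon \mathopen{[}0,+\infty\mathclose{[} \to \mathbb{R}^d$ of \eqref{eq-centri2} exists with $x(0) = x_0$, $x(t)/\lvert x(t)\rvert \to \xi^+$, and $\lvert x(t)\rvert \sim \omega t^{2/(\alpha+2)}$, where $\omega = \bigl{(}(\alpha+2)^2 U(\xi^+)/2\bigr{)}^{1/(\alpha+2)} = \sqrt[3]{9 U(\xi^+)/2}$ upon substituting $\alpha = 1$, exactly as in the statement. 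The only potential pitfall is ensuring uniformity of the decay bounds across the cone, which is handled by the $0$-homogeneity argument above; beyond that, the result is entirely a bookkeeping exercise translating the structure of \eqref{eq-centri2} into the framework of \eqref{eq-main}.
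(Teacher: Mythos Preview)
Your proof is correct and follows essentially the same route as the paper: the argument for Corollary~\ref{cor-main2} is entirely contained in the paragraphs preceding its statement, which do exactly what you describe---split $W$ into the per-particle pieces $W_j$, invoke the Taylor-type expansion from the proof of Corollary~\ref{cor-main} to obtain \eqref{eq-oggi11}, and then use the $0$-homogeneity of $r_j$ on the cone (equation \eqref{stima0omogeneo}) together with the equivalence of norms and $\nabla = M^{-1}D$ to upgrade to the bound \eqref{hp-main} with $\beta=2$. You also correctly flag the apparent slip in the statement regarding the interval for $\eta'$ (Theorem~\ref{th-main} yields $\eta'\in\mathopen{[}\eta,1\mathclose{[}$, not $\mathopen{]}0,\eta\mathclose{[}$) and the intended target equation \eqref{eq-centri2} rather than \eqref{eq-centri}.
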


We end this section by presenting an interesting example of a central configuration for $U$ satisfying the $(\textsc{BS})$-condition.

Let us assume $m_j = m$ for every $j=1,\ldots,H$ and, for simplicity, $\widetilde{m}=1$; notice that in such a situation we have
\begin{equation*}
\langle x,y \rangle =
m \, x\cdot y,\quad |x|^2=m \|x\|^2,\quad \text{for all $x,y\in \mathbb{R}^{d}$,}
\end{equation*}
where $x\cdot y$ denotes the Euclidean scalar product. We claim that a configuration 
$\xi^+ = (z_1,\ldots,z_H)$ satisfying
\begin{equation}\label{eq-171}
\|z_j\|=\dfrac{1}{\sqrt{Hm}},\quad \|z_j-z_l\|=\dfrac{2}{\sqrt{Hm}} \sin \dfrac{\pi}{H},\quad \text{for all $j, l$, with $j\neq l$,}
\end{equation}
namely, a configuration with the $H$ bodies lying at the vertices of a regular $H$-gon inscribed in the circle of radius  
$1/\sqrt{Hm}$ and center the origin, is a (normalized) central configuration for $U$.

Indeed, it is well known that $\xi^+$ is a central configuration for the $H$-body problem, namely
\begin{equation} \label{eq-171bis}
\partial_{x_j} V(\xi^+) = - m \, V(\xi^+)z_j,\quad j=1,\ldots,H,
\end{equation}
where $V$ is the $H$-body potential. A simple computation shows that
\begin{equation} \label{eq-172}
V(\xi^+) = \dfrac{H^{\frac{3}{2}} (H-1) m^{\frac{5}{2}}}{4 \sin \frac{\pi}{H}} ,\qquad U(\xi^+)=(Hm)^{\frac{3}{2}}+ \dfrac{H^{\frac{3}{2}} (H-1) m^{\frac{5}{2}}}{4 \sin \frac{\pi}{H}}.
\end{equation}
Taking into account \eqref{eq-171bis}, this implies that
\begin{equation*}
\partial_{x_j} U(\xi^+)=-\dfrac{m}{\|z_j\|^3} z_j + \partial_{x_j} V(\xi^+)=-m \, (Hm)^{\frac{3}{2}} z_j - m \, V(\xi^+)z_j =-m \,U(\xi^+)z_j,
\end{equation*}
proving that $\xi^+$ is a central configuration for the potential $U$, as well.

As far as condition $(\textsc{BS})$ is concerned, let us first recall that
\begin{equation*}
\langle \nabla^2 \mathcal{U}(\xi^+)y,y \rangle = \langle \nabla^2 U(\xi^+)y,y \rangle +U(\xi^+)|y|^2
\end{equation*}
(see \eqref{Moeckel}); moreover, we observe that
\begin{equation*}
\langle \nabla^2 U(\xi^+)y,y \rangle =(D^2 U(\xi^+)y)\cdot y,
\end{equation*}
where $D^2 U$ is the Hessian matrix of $U$. Now, a standard computation shows that
\begin{align*}
(D^2 U(\xi^+)y)\cdot y 
&= m\, \sum_{j=1}^{H} \left(-\dfrac{\|y_j\|^2}{\|z_j\|^3}+3\dfrac{(z_j\cdot y_j)^2}{\|z_j\|^5}\right)+
\\ & \quad + m^2\, \sum_{l<j} \left(-\dfrac{\|y_l-y_j\|^2}{\|z_l-z_j\|^3}+3\dfrac{\bigl{(}(z_l-z_j)\cdot (y_l-y_j)\bigr{)}^2}{\|z_l-z_j\|^5}\right).
\end{align*}
Taking into account \eqref{eq-171} we deduce that
\begin{align*}
(D^2 U(\xi^+)y)\cdot y 
&= - H^{\frac{3}{2}} m^{\frac{5}{2}}  \, \|y\|^2 
+ 3 H^{\frac{5}{2}} m^{\frac{7}{2}} \sum_{j=1}^{H} (z_j\cdot y_j)^2
- \dfrac{H^{\frac{3}{2}} m^{\frac{7}{2}} }{ 8 \sin^{3} \frac{\pi}{H}}\, \sum_{l<j} \|y_l-y_j\|^2
\\
& \quad
+ \dfrac{3  H^{\frac{5}{2}} m^{\frac{9}{2}} }{32 \sin^{5} \frac{\pi}{H}} \, \sum_{l<j}  \bigl{(} (z_l-z_j)\cdot (y_l-y_j) \bigr{)}^2.
\end{align*}
Hence, we have
\begin{align*}
&(D^2 U(\xi^+)y)\cdot y + m \, U(\xi^+)\|y\|^2
\\
&= 3 H^{\frac{5}{2}} m^{\frac{7}{2}} \sum_{j=1}^{H} (z_j\cdot y_j)^2
- \dfrac{H^{\frac{3}{2}} m^{\frac{7}{2}} }{ 8 \sin^{3} \frac{\pi}{H}}\, \sum_{l<j} \|y_l-y_j\|^2
\\
& \quad
+ \dfrac{3 H^{\frac{5}{2}} m^{\frac{9}{2}}}{32 \sin^{5} \frac{\pi}{H}} \, \sum_{l<j}  \bigl{(} (z_l-z_j)\cdot (y_l-y_j) \bigr{)}^2
+\dfrac{H^{\frac{3}{2}}(H-1) m^{\frac{7}{2}} }{4 \sin \frac{\pi}{H}}\,\|y\|^2.
\end{align*}
Using the Lagrange multiplier method, it is possible to show that
\begin{equation*}
\sum_{j=1}^{H}  (z_j\cdot y_j)^2=\dfrac{1}{Hm}\|y\|^2,
\qquad
\sum_{l<j} \|y_l-y_j\|^2\leq \dfrac{H(H-1)}{2}\|y\|^2,
\end{equation*}
for every $y\in \mathbb{R}^{d}$. Hence, we obtain
\begin{align}
&(D^2 U(\xi^+)y)\cdot y + m \, U(\xi^+)\|y\|^2
\\
&\geq  
3 H^{\frac{3}{2}} m^{\frac{5}{2}} \|y\|^{2} + \dfrac{H^{\frac{3}{2}}(H-1) m^{\frac{7}{2}}}{16 \sin^{3} \frac{\pi}{H}} \biggl{(} 4\sin^{2} \frac{\pi}{H} - H \biggr{)} \|y\|^{2},
\label{eq-6.9}
\end{align}
for every $y\in \mathbb{R}^{d}$.

If $H=2$ and $H=3$, we immediately deduce that $(D^2 U(\xi^+)y)\cdot y + m \, U(\xi^+)\|y\|^2 > 0$ for $y \neq 0$ and thus the $(\textsc{BS})$-condition is satisfied (even more, $\xi^+$ is a locally minimizing central configuration).

Let us now deal with the case $H\geq 4$. From
\begin{equation*}
-\dfrac{1}{8} U(\xi^+) m \|y\|^{2} = - \dfrac{1}{8} H^{\frac{3}{2}} m^{\frac{5}{2}} \|y\|^{2} - \dfrac{H^{\frac{3}{2}}(H-1) m^{\frac{7}{2}}}{32 \sin \frac{\pi}{H}} \|y\|^{2},
\end{equation*}
for every $y\in \mathbb{R}^{d}$, and from inequality \eqref{eq-6.9}, we deduce that the $(\textsc{BS})$-condition is satisfied if 
\begin{equation*}
\dfrac{25}{8} H^{\frac{3}{2}} m^{\frac{5}{2}} + \dfrac{H^{\frac{3}{2}}(H-1) m^{\frac{7}{2}}}{32 \sin^{3} \frac{\pi}{H}} \biggl{(} 9 \sin^{2} \frac{\pi}{H} - 2H \biggr{)}> 0.
\end{equation*}
A simple computation shows that this is true if and only if
\begin{equation*}
m < \dfrac{100 \sin^{3} \frac{\pi}{H}}{(H-1) \Bigl{(} 2H - 9 \sin^{2} \frac{\pi}{H} \Bigr{)}}.
\end{equation*}
Summing up, the regular $H$-gon with $H \geq 4$ satisfies the $(\textsc{BS})$-condition for small values of the masses. 
This is a rather unexpected fact: indeed, in \cite[Section~5.2]{BaSe-08} it is proved, for the $H$-body problem with $H \geq 4$, the regular $H$-gon never satisfies the $(\textsc{BS})$-condition. Following the computation of this section, we actually realize that the presence of a 
Keplerian part in the potential $U$ plays an essential role.

\bibliographystyle{elsart-num-sort}
\bibliography{BDFT-biblio}

\end{document}